\documentclass{elsarticle}

\usepackage{amsmath,amssymb,amsfonts,graphicx,hyperref}
\usepackage{amsthm}
\usepackage{listings}
\usepackage[margin=1in]{geometry}

\newtheorem{proposition}{Proposition}
\newtheorem{corollary}{Corollary}
\newtheorem{lemma}{Lemma}
\newtheorem{theorem}{Theorem}
\newtheorem{assumption}{Assumption}
\newtheorem{remark}{Remark}

\usepackage{amstext}
\usepackage{stmaryrd}
\usepackage{algpseudocode}

\usepackage[normalem]{ulem}
\usepackage{graphicx}

\usepackage{array}
\usepackage{verbatim}
\usepackage{booktabs}
\usepackage{calc}
\usepackage{textcomp}
\usepackage{multirow}

\usepackage[all]{xy}

\usepackage{cancel}
\usepackage{mathtools}
\usepackage{placeins}
\usepackage{xspace}

\usepackage{tikz,tikzscale}
\usetikzlibrary{calc}
\usetikzlibrary{shadings}

\usepackage[mode=multiuser,status=draft]{fixme} 
\fxsetup{envlayout=color}
\fxsetup{innerlayout=inline}
\fxsetup{targetlayout=colorcb}
\FXProvidesTargetLayout{color}

\fxusetheme{color}
\FXRegisterAuthor{aa}{envmr}{AA}
\FXRegisterAuthor{mw}{envmw}{MW}

\usepackage{subcaption}
\usepackage[format=plain,indention=.5cm]{caption}
\usepackage{pgfplots}
\pgfplotsset{compat=1.18}
\usepgfplotslibrary{fillbetween}

\usepackage{colortbl} 

\usepackage{booktabs} 

\usepackage{bm,upgreek}

\usepackage{graphicx} 
\usepackage[linesnumbered,vlined,commentsnumbered,ruled]{algorithm2e}

\definecolor{darkgreen}{rgb}{0.0, 0.5, 0.0}

\definecolor{deepgreen}{RGB}{0,100,0}

\newcommand{\plotwidth}{0.49\columnwidth}
\newcommand{\plotheight}{0.4\columnwidth}

\pgfplotscreateplotcyclelist{paper markers}{
    {teal,    solid, mark=*},
    {orange,   solid, mark=triangle*},
    {blue,     solid, mark=square*},
    {red,      solid, mark=pentagon*},
    {violet,     solid, mark=otimes*},
    {brown,    solid, mark=halfcircle*},
    {black,   solid, mark=diamond*},
}

\pgfplotsset{cycle list name=paper markers}

\pgfplotsset{
    paperplot/.style={
            width=\plotwidth,
            height=\plotheight,
            cycle list name=paper markers,
            every axis/.append style={font=\small},
            grid=major,
            thick
        }
}

\pgfplotsset{
    longplot/.style={
            width=0.8\columnwidth,
            height=\plotheight,
            cycle list name=paper markers,
            every axis/.append style={font=\small},
            grid=major,
            thick
        }
}

\title{A Unified CutFEM Formulation for Finite-Strain Elasticity: Energy Minimisation and Corner Singularities}

\author[1]{Micha\l{} Tomasz Wichrowski\corref{cor1}}
\author[1]{Ella Godiva Noomen}

\address[1]{Ruprecht-Karls-Universit\"at Heidelberg, Faculty of Mathematics and Computer Science, Germany}
\cortext[cor]{Corresponding author. \texttt{mwichro@mimuw.edu.pl}}

\begin{document}

\providecommand{\Grad}{\nabla}

\newcommand{\modelB}{a deviatoric--volumetric split neo-Hookean model}
\newcommand{\modelBshort}{split neo-Hookean}
\newcommand{\loadf}{\zeta}

\begin{abstract}
    We present a fully variational, model-independent formulation of the Cut Finite Element Method
    (CutFEM) for finite-strain elasticity.
    The discrete problem is derived from a single augmented energy functional consisting of the bulk hyperelastic energy,
    the Nitsche terms that impose the boundary conditions weakly, and the ghost-penalty stabilisation. At each nonlinear
    iterate, the residual is the exact first variation of this functional with the adaptive Nitsche weight frozen, while
    the correction uses a symmetric, coercive approximation of its Hessian. Automatic differentiation (AD) generates the
    first Piola--Kirchhoff stress tensor and the elasticity tensor directly from the scalar energy density, avoiding manual
    re-derivation when exchanging hyperelastic models. To our knowledge, this is the first unfitted finite-strain scheme
    combining an energy-only, model-independent construction with AD and an accuracy analysis at unfitted boundaries.

    Analysis of the linearised problem solved at each quasi-Newton step establishes cut-independent coercivity, continuity,
    and an $O(h^{-2})$ condition number bound, yielding a quasi-optimal convergence theorem for regular solutions through
    the Brezzi--Rappaz--Raviart framework. Numerically, the method attains optimal $h$-convergence for quadratic and
    cubic elements on a smooth test case. Furthermore, we quantify the method's accuracy limit at mixed
    Dirichlet--Neumann junctions using the Kolosov--Muskhelishvili characteristic equation. The exact solution's corner
    singularity caps the convergence rate identically for fitted and unfitted methods.
    Local mesh refinement removes this bound: we verify the recovery of optimal rates numerically for first-order elements,
    and prove that the unfitted discretisation inherits the rate the underlying refinement attains, with
    cut-independent constants.
\end{abstract}

\begin{keyword}
    CutFEM \sep Nonlinear Elasticity \sep Automatic Differentiation \sep Finite Element Method
\end{keyword}

\maketitle

\section{Introduction}
\label{sec:introduction}

Finite-strain elasticity provides the framework for a wide spectrum of materials and applications in which the
assumption of infinitesimally small displacements is no longer tenable \cite{Holzapfel_2000, ogden1997non}. Once large
deformations enter, the linear relation between stress and strain of Hooke's law is lost and the governing equations
become nonlinear, both geometrically and, in general, materially \cite{Bathe2006}. Since analytical solutions for such
problems are rarely available, one relies on numerical methods, primarily the Finite Element Method (FEM), which
discretises the weak (or, for hyperelastic materials, the variational) formulation of the problem
\cite{Zienkiewicz_Taylor_Nithiarasu_Zhu_2005, Ciarlet_1978}. Two practical difficulties, however, persistently
accompany finite element simulations at finite strain: the treatment of complex geometries, and the consistent
derivation of the linearised operators required by Newton-type solvers. This paper addresses both within a single
variational framework.

In classical, body-fitted FEM the mesh matches the geometry of the domain, so that the overall complexity of a
simulation is concentrated in the meshing procedure \cite{FUMAGALLI2019694}, which becomes expensive for intricate or
evolving geometries \cite{BOFFI2023101}. Non-matching (unfitted, immersed) methods avoid this bottleneck by embedding
the physical domain into a fixed background mesh and describing its boundary implicitly, typically by a level-set
function \cite{Burman_Claus_Hansbo_Larson_Massing_2014}. The family of such methods includes the Partition of Unity FEM
\cite{MELENK1996289}, the Extended/Generalised FEM \cite{gfemphdthesis, Fries_Belytschko_2010}, the Immersed FEM
\cite{ZHANG20042051}, the Shifted Boundary Method \cite{MAIN2018972}, and the Cut Finite Element Method (CutFEM),
introduced in \cite{HANSBO20025537} and reviewed comprehensively in \cite{Burman_Claus_Hansbo_Larson_Massing_2014,
    Burman_Hansbo_Larson_Zahedi_2025}. In this work we adopt CutFEM in its classical form: Dirichlet boundary conditions
are imposed weakly by Nitsche's method \cite{Nitsche1971}, closely related to stabilised Lagrange multiplier
formulations \cite{Babuska1973, STENBERG1995139, BURMAN20102680}, and the small-cut ill-conditioning is cured by
ghost-penalty stabilisation \cite{BURMAN20101217, Burman_Claus_Hansbo_Larson_Massing_2014}.

Although CutFEM has matured rapidly, with applications ranging from linear elasticity \cite{Hansbo_2017,
    yang2023squaresfiniteelementmethod} to fluid--structure interaction \cite{AGER2019253, BURMAN2014497, WINTER2018220}
and contact problems \cite{burman2017derivingrobustunfittedfinite, Claus2021, Claus_Kerfriden_2017}, its application to
nonlinear elasticity remains comparatively scarce. The principal contributions are \cite{Rueberg2016, Poluektov2019,
    BADIA2021114093, POLUEKTOV2022114234}, each presenting an unfitted discretisation of a finite-strain problem:
\cite{POLUEKTOV2022114234} treats fracture and contact, \cite{Poluektov2019} coupled mechanics--diffusion--reaction,
\cite{Rueberg2016} boundary-value and interface problems, and \cite{BADIA2021114093} elasto-plasticity with an
aggregation-based alternative to the ghost penalty. All of these impose Dirichlet conditions weakly in the spirit of
Nitsche, and \cite{Poluektov2019, POLUEKTOV2022114234} derive residual and tangent from a total variational
formulation. None of them, however, reports the use of automatic differentiation for the constitutive derivatives.

The second difficulty mentioned above is the derivation of the residual and tangent themselves. At finite strain these
involve the first Piola--Kirchhoff stress tensor and the fourth-order elasticity tensor, i.e. the first and second
derivatives of the strain energy density. Manual derivation is error-prone and its implementation cumbersome
\cite{Vigliotti2020}, while finite differences suffer from truncation errors; both jeopardise the consistent
linearisation on which the quadratic convergence of Newton's method depends \cite{SIMO1985101}. Automatic
differentiation (AD) offers an exact and flexible alternative: it computes derivatives of arbitrarily complex functions
to machine precision, provided they decompose into elementary operations with known derivatives
\cite{baydin2018automaticdifferentiationmachinelearning, Hillgaertner2021ADStrainEnergy}. Its use for residual and
tangent generation in nonlinear solid mechanics has been demonstrated on fitted meshes \cite{Vigliotti2020,
    Hillgaertner2021ADStrainEnergy, LI2025106354, RAMABATHIRAN20142867}, and pairs naturally with energy-based formulations
\cite{NEUNTEUFEL2021113857, SCHRODER199777}: once the material law is specified as an energy, residual and tangent
follow mechanically by differentiation, and the code becomes model-independent.

In this paper we combine these ingredients into a single, fully variational CutFEM framework for nonlinear elasticity.
Here, \emph{unified} means that the bulk constitutive response, weak boundary enforcement, and cut-cell stabilisation
are derived and differentiated within the same augmented-energy and AD workflow, independently of the material model.
The key structural observation is that not only the bulk material response, but also the Nitsche boundary terms and the
ghost-penalty stabilisation can be incorporated into one augmented energy functional. At each nonlinear iterate its
exact first variation gives the residual with the adaptive Nitsche weight frozen, while a symmetric, coercive
approximation of the Hessian gives the quasi-Newton tangent. This energy formulation has immediate practical
consequences: the correction is a descent direction for the current iterate-frozen energy, and the constitutive
derivatives (the Piola--Kirchhoff stress tensor and the fourth-order elasticity tensor) are generated by AD (here via
AceGen, following \cite{wichrowski2025matrixfreemethodsfinitestrainelasticity}). Consequently, exchanging the
hyperelastic model requires changing only the scalar energy density. To the best of our knowledge, no previous work
combines (i) an unfitted finite element discretisation, (ii) nonlinear elasticity, (iii) automatic differentiation, and
(iv) a model-independent, energy-only formulation in a single framework.

On the theoretical side, we analyse the linearised problem solved at each quasi-Newton step: we prove cut-independent
coercivity, continuity, and an $O(h^{-2})$ condition number bound under a uniform ellipticity assumption along the
quasi-Newton path, and show that each damped quasi-Newton correction is a descent direction for the current
iterate-frozen energy. Verifying the three Brezzi--Rappaz--Raviart assumptions \cite{brezzi1980finite} for the
linearised CutFEM forms (stability from the cut-independent coercivity, consistency from the interpolation estimate,
and the Lipschitz tangent on the regular branch, in the sense of \cite{brezzi1980finite}) yields a quasi-optimal
convergence theorem for regular (corner-free) solutions, subject to an explicit operator-norm bound on the quasi-Newton
Jacobian defect. We then analyse the accuracy at unfitted Dirichlet--Neumann junctions, where the regularity that
underpins this theorem fails. When such a junction lies in the \emph{interior} of a cut cell, the corner singularity
$r^\lambda$ of the exact solution \cite{grisvard1985elliptic, rossle2000corner} cannot be reproduced by a single
polynomial on the junction cell, and we prove unconditional lower bounds of $h^{\lambda}$ in energy and $h^{1+\lambda}$
in $L^2$, and derive the duality cap $h^{\min(p+1,\,2\lambda)}$ on the $L^2$ rate at a mixed junction, independently of
the polynomial degree, the Nitsche parameter, the stabilisation, the quadrature, and, by the same regularity argument,
the alignment of the junction with the background mesh. This cap is a property of the exact solution, shared with
body-fitted discretisations on quasi-uniform meshes; what is specific to the unfitted setting is that none of its
devices (exact junction quadrature, penalty tuning, vertex alignment) removes it. We quantify the dependence of
$\lambda$ on the material parameters via the Kolosov--Muskhelishvili characteristic equation, for both the
clamped--free (mixed) and clamped--clamped corner: on a straight boundary segment the first-order cap is
material-independent, while at a right-angle corner a smaller Poisson ratio mitigates but never removes the degradation
for $p \geq 2$. Finally, we show that the cap is removable by local mesh refinement and that the resulting rate is
inherited by CutFEM cut-independently: whatever rate a matching mesh attains on the same background refinement, the
unfitted discretisation attains too (Theorem~\ref{thm:cutinherit},\ref{app:gradedstab}). Consequently, the
corner-specific analysis reduces to the classical body-fitted question.

We validate the framework on two hyperelastic models, the neo-Hookean model and \modelB{}, run through the identical
code path with only the scalar energy density changed, directly demonstrating the model-independence of the energy-only
formulation. On a smooth, corner-free disc test case both models attain optimal $h$-convergence for $p = 2, 3$. On
the pole geometry we then exhibit the predicted corner behaviour: under mixed boundary conditions the
Dirichlet--Neumann junctions cap the rate at $2\lambda < 2$ for every polynomial degree, while under pure Dirichlet
conditions the milder clamped--clamped corners cap it at $1+\lambda \in (2, 2.63)$, optimal only for $p = 1$. Errors
are measured against a matching body-fitted discretisation on the same meshes, and the singular-exponent predictions
are confirmed across a range of values of the Poisson ratio in linear elasticity.
All computations use the \texttt{deal.II} library \cite{dealiipaper97}, whose non-matching module supplies the
level-set geometry and the cut-cell quadrature of \cite{Saye2015-xr}, with the AceGen-generated constitutive routines
linked into the same assembly loop for both material models.

The remainder of the paper is a round trip from formulation to discretisation, through numerics, and back to analysis
to understand and remedy the convergence cap that the numerics reveal at corners. Section~\ref{sec:formulation} casts
the finite-strain problem as energy minimisation and carries it to the CutFEM setting through level-set geometry,
energy-consistent Nitsche terms, ghost-penalty stabilisation, and AD-generated constitutive tensors.
Section~\ref{sec:theory} establishes stability, conditioning, and convergence for the resulting linearised problems.
Section~\ref{testcase} then verifies these predictions numerically: smooth domains behave in accordance with the
theoretical predictions, but corners introduce an unexpected accuracy cap. The remaining sections return to theoretical
analysis to explain and remove it. Section~\ref{sec:mixedjunction} identifies the cap as an approximation-theoretic
obstruction at unfitted Dirichlet--Neumann junctions and quantifies this effect through the Kolosov--Muskhelishvili
characteristic equation, while Section~\ref{sec:remedy} proposes local mesh grading as a remedy and confirms
numerically that it restores optimal, cut-independent convergence rates; the supporting stability analysis is proved in
\ref{app:gradedstab}.

\section{Energy-based CutFEM formulation}
\label{sec:formulation}
\label{sec:discretisationcutfem}
Let $\Omega\subset\mathbb R^d$ be the reference configuration, with boundary
$\partial\Omega=\Gamma_D\cup\Gamma_N$, let
$\bm\varphi(\bm X)=\bm X+\bm u(\bm X)$ be the deformation, and let $\bm n$ denote
the outward unit normal to $\partial\Omega$. The deformation gradient and its
determinant are
\begin{equation}
    \bm F=\Grad\bm\varphi=\bm I+\Grad\bm u,
    \qquad J:=\det\bm F,
\end{equation}
where $\Grad$ denotes the gradient with respect to $\bm X$. For a hyperelastic
material with strain-energy density $\Psi$, body force $\bm f_0$, and prescribed
traction $\bm t_0$ on $\Gamma_N$, the total potential energy is
\begin{equation}
    \label{eq:energy}
    \mathcal{E}(\bm{u}) := \int_\Omega \Psi(\bm{F}) \:\: d V - \int_\Omega \bm{f}_0\cdot \bm{u} \:\: d V  - \int_{\Gamma_N} \bm{t}_0 \cdot \bm{u} \:\: d S
    .
\end{equation}
Equilibria are stationary points of \eqref{eq:energy}, so
$\mathcal F(\bm u;\bm v)=0$ for all variations $\bm v$ that vanish on $\Gamma_D$.
At an iterate $\bar{\bm u}$, the first variation defines the residual, where
$D_{\bm v}$ denotes the Gateaux derivative in the direction $\bm v$,
\begin{align}
    \label{nonlinres}
    \mathcal F(\bar{\bm u};\bm v)
    :=D_{\bm v}\mathcal E(\bar{\bm u})
    =\int_\Omega \bm P(\bar{\bm u}):\Grad\bm v\,dV
    -\int_\Omega \bm f_0\cdot\bm v\,dV
    -\int_{\Gamma_N}\bm t_0\cdot\bm v\,dS,
\end{align}
where the first Piola--Kirchhoff stress is
\begin{equation}
    \bm P:=\frac{\partial\Psi}{\partial\bm F}.
\end{equation}
Linearising the residual in a direction $\Delta\bm u$ gives the tangent
\begin{align}
    \label{eq:tangent}
    \mathcal K(\bar{\bm u};\Delta\bm u,\bm v)
    :=D_{\Delta\bm u}\mathcal F(\bar{\bm u};\bm v)
    =\int_\Omega \Grad\Delta\bm u:\mathbb L(\bar{\bm u}):\Grad\bm v\,dV,
\end{align}
with
\begin{equation}
    \label{eq:L}
    \mathbb L:=\frac{\partial\bm P}{\partial\bm F}
    =\frac{\partial^2\Psi}{\partial\bm F\otimes\partial\bm F}.
\end{equation}
As a Hessian of $\Psi$, $\mathbb L$ has the major symmetry
$L_{ijkl}=L_{klij}$ \cite{Bonet1997-og}.
Once the scalar penalty and stabilisation parameters are fixed, the formulation depends
on the constitutive model only through $\bm P$ and $\mathbb L$. Both tensors can be
generated directly from the scalar energy $\Psi$ by automatic differentiation. We
use \href{https://www.wolfram.com/products/applications/acegen/}{AceGen} in
Mathematica to export them as pointwise kernels, following
\cite{wichrowski2025matrixfreemethodsfinitestrainelasticity}; changing the material
model therefore requires only a new expression for $\Psi$.

Throughout, elastic material constants carry the subscript $e$ -- the shear modulus $\mu_e$, Lam\'e's first constant
$\lambda_e$, the bulk modulus $\kappa_e$, and Young's modulus $E_e$ -- so that the unsubscripted $\lambda$ and $\mu$
are free for the corner singularity exponent and the mesh-grading parameter, and the unsubscripted $E$ for the
extension operator, all introduced later.

\subsection{Solving the Linearised Problem}
\label{sec:newton}
At iteration $k$, the correction $\Delta\bm u$ and the damped update with step
$\beta\in(0,1]$ are defined by
\begin{align}
    \mathcal K(\bar{\bm u}^{(k)};\Delta\bm u,\bm v)
                        & =-\mathcal F(\bar{\bm u}^{(k)};\bm v)
    \qquad \forall\bm v,                                        \\
    \label{eq:newtonupdate}
    \bar{\bm u}^{(k+1)} & =\bar{\bm u}^{(k)}+\beta\Delta\bm u,
    \qquad 0<\beta\leq1.
\end{align}
Here $\Delta\bm u$ denotes a displacement correction.\footnote{The Laplace operator
    is not used in this work; throughout, $\Delta$ denotes a Newton correction.} The
CutFEM discretisation below constructs the residual and symmetric quasi-Newton
tangent used in this update.

\subsection{Active mesh and finite-element space}
\label{nonmatchingmesh}
Let $\mathcal T^h$ be a Cartesian background mesh of cells $T$, with mesh size
$h:=\max_{T\in\mathcal T^h}\operatorname{diam}T$, generated independently of the
geometry. The reference domain is described implicitly by a level-set function
$\psi:\mathbb R^d\to\mathbb R$, negative inside the body and vanishing on its boundary,
\begin{equation}
    \Omega=\{\bm X\in\mathbb R^d:\psi(\bm X)<0\},
    \qquad \partial\Omega=\{\bm X\in\mathbb R^d:\psi(\bm X)=0\}.
\end{equation}
Until Section~\ref{sec:remedy} the mesh is quasi-uniform, so that $h$ describes
every cell; the local sizes needed for graded meshes are introduced there.
The active and cut-cell sets and the active domain are
\begin{equation}
    T_\Omega^h:=\{T\in\mathcal T^h:T\cap\Omega\neq\emptyset\},\qquad
    T_{\partial\Omega}^h:=\{T\in T_\Omega^h:T\cap\partial\Omega\neq\emptyset\},
    \qquad \Omega_T:=\bigcup_{T\in T_\Omega^h}T.
\end{equation}
We use the conforming tensor-product space
\begin{equation}
    \mathbb V_h:=\{\bm v_h\in[C^0(\Omega_T)]^d:
    \bm v_h|_T\in[\mathcal Q_p(T)]^d\ \forall T\in T_\Omega^h\},
\end{equation}
where $\mathcal Q_p(T)$ is the space of polynomials on $T$ of degree at most $p$
in each coordinate separately, and $p\geq1$ is the polynomial degree.
Figure~\ref{fig:ghostpenalty} illustrates the active mesh, cut cells, and the
ghost-penalty faces $\mathcal F_h$ defined in Section~\ref{sec:ghostpenalty}.
\begin{figure}[t]
    \centering
    \begin{tikzpicture}[scale=1.2]
        \draw[step=1cm, gray, very thin] (-0.2,0) grid (5.2,3.2);

        \node at (-0.35,2.5) {$\partial\Omega$};
        \draw[line width=2pt, red, dashed]
        plot [smooth, tension=0.5] coordinates
            {(-0.2,2.3) (1,2.2) (2,2.5) (3,2.2) (4,1.5) (5.2,1.5)};

        \node at (2.5,0.7) {$\Omega$};
        \node at (3.2,2.7) {$\mathcal F_h$};

        \foreach \x/\y in {0/2,1/2,2/2,3/1,3/2,4/1} {
                \fill[blue, opacity=0.3] (\x,\y) rectangle (\x+1,\y+1);
            }
        \foreach \x/\y in {0/0,0/1,1/0,1/1,2/0,2/1,3/0,4/0} {
                \fill[green, opacity=0.3] (\x,\y) rectangle (\x+1,\y+1);
            }

        \foreach \x/\y in {0/2,1/2,2/2,2/1,3/1,-1/2,4/1} {
                \draw[blue, line width=1.5pt] (\x+1,\y) -- (\x+1,\y+1);
            }
        \foreach \x/\y in {0/1,1/1,2/1,3/0,4/0,3/1} {
                \draw[blue, line width=1.5pt] (\x,\y+1) -- (\x+1,\y+1);
            }
    \end{tikzpicture}
    \caption{Active background mesh near the immersed boundary. Green cells lie
        inside $\Omega$, blue cells are cut by $\partial\Omega$, and the highlighted
        faces belong to $\mathcal F_h$.}
    \label{fig:ghostpenalty}
\end{figure}

\subsection{Boundary Conditions}
\label{bcconditions}
Dirichlet conditions on the immersed boundary are imposed by augmenting the energy
with symmetric Nitsche terms \cite{Nitsche1971,Benzaken2024}. Writing the Dirichlet
portion as $\partial\Omega$, that is, writing the formulation and its analysis for the pure-Dirichlet case
$\Gamma_D=\partial\Omega$ so that no separate symbol is needed, the iterate-frozen functional for homogeneous data is
\begin{equation}
    \label{eq:nitscheenergy}
    \mathcal{E}_{h}(\bm{u};\bar{\bm{u}}) := \mathcal{E}(\bm{u})
    - \int_{\partial\Omega} \bm{n} \cdot \bm{P}(\bm{u}) \cdot \bm{u} \: dS
    + \int_{\partial\Omega} \frac{\gamma_D \, \eta(\bar{\bm{u}})}{2h} \, |\bm{u}|^2 \: dS ,
\end{equation}
where the penalty weight $\eta(\bar{\bm u})$, defined in \eqref{eq:penaltyweight}
below, is evaluated at the current iterate and held fixed under variation. The
semicolon records this: \eqref{eq:nitscheenergy} is not one functional but a
family indexed by the iterate $\bar{\bm u}$, which enters as a fixed datum at each
quasi-Newton step. Variations are always taken with respect to the first argument
only, with $\eta(\bar{\bm u})$ treated as a constant, so the residual below is the
exact first variation of that member of the family. For mixed conditions the integrals are restricted to $\Gamma_D$;
inhomogeneous data $\bm g_D$ are treated by replacing the undifferentiated boundary
trace $\bm u$ by $\bm u-\bm g_D$. The boundary residual is
\begin{align}
    \mathcal F_{\mathrm{Bound}}(\bar{\bm u},\bm v_h)
    ={} & -\int_{\partial\Omega}\bm n\cdot\bm P(\bar{\bm u})\cdot\bm v_h\,dS \notag \\
    \label{eq:fbound}
        & -\int_{\partial\Omega}\bm n\cdot
    (\mathbb L(\bar{\bm u}):\Grad\bm v_h)\cdot\bar{\bm u}\,dS
    +\int_{\partial\Omega}\frac{\gamma_D\eta(\bar{\bm u})}{h}
    \bar{\bm u}\cdot\bm v_h\,dS .
\end{align}
The assembled symmetric boundary tangent is
\begin{align}
    \mathcal K_{\mathrm{Bound}}(\bar{\bm u};\bm u_h,\bm v_h)
    ={} & -\int_{\partial\Omega}\bm n\cdot
    (\mathbb L(\bar{\bm u}):\Grad\bm u_h)\cdot\bm v_h\,dS \notag \\
    \label{eq:kbound}
        & -\int_{\partial\Omega}\bm n\cdot
    (\mathbb L(\bar{\bm u}):\Grad\bm v_h)\cdot\bm u_h\,dS
    +\int_{\partial\Omega}\frac{\gamma_D\eta(\bar{\bm u})}{h}
    \bm u_h\cdot\bm v_h\,dS .
\end{align}
These terms are the finite-strain counterpart of the symmetric Nitsche forms in
\cite{BADIA2021114093,Hansbo_2017,Sticko2020}, with the elasticity tensor evaluated
at the current iterate.

\paragraph{Remark (symmetrised tangent)} The exact derivative of \eqref{eq:fbound} also contains $-\int_{\partial\Omega}\bm n\cdot((D_{\bm u_h}\mathbb
    L):\Grad\bm v_h) \cdot\bar{\bm u}\,dS$, which involves the third derivative of $\Psi$. We omit it from
\eqref{eq:kbound}; the residual and its fixed points are unchanged, while the tangent remains symmetric. Thus the
method is a symmetric quasi-Newton scheme on each fixed mesh. Lemma~\ref{lem:defect} bounds this Jacobian defect by
$O(h^{p-d/2})$ in the Newton neighbourhood.

\paragraph{Remark (penalty weight)} We take $\gamma_D=5(p+1)p$ and
\begin{equation}
    \label{eq:penaltyweight}
    \eta(\bar{\bm{u}}) := \max\left\{ 2\mu_e , \; \tfrac{1}{2}\,\|\mathbb{L}(\bar{\bm{u}})\|_1 \right\},
    \qquad \|\mathbb{L}\|_1 := \sum_{i,j,k,l} |L_{ijkl}| ,
\end{equation}
where $\mu_e$ is the shear modulus. The componentwise norm is a conservative,
inexpensive bound available directly from the AD-generated tensor; it avoids a
spectral problem at every boundary quadrature point. It tracks the local finite-strain
stiffness, while the floor keeps the weight positive in softened states. Because the
maximum and componentwise norm are nonsmooth, $\eta$ is frozen under variation; its
omitted derivative is included in the defect analysed in Lemma~\ref{lem:defect}.

\paragraph{Remark (fixed weight)} A state-independent value $\eta_{\mathrm{ref}}\geq\max\{2\mu_e,\tfrac12\sup\|\mathbb L\|_1\}$ over the expected
deformation range would make \eqref{eq:nitscheenergy} one fixed functional. It must, however, represent the stiffest
state and therefore over-penalises softer states. We retain the adaptive, iterate-frozen weight
\eqref{eq:penaltyweight}.

\subsection{Ghost Penalty Stabilisation}
\label{sec:ghostpenalty}
Arbitrarily small intersections $|T\cap\Omega|$ lead to cut-dependent conditioning.
The face ghost penalty restores control on the active mesh by penalising jumps of
normal derivatives near cut cells \cite{BURMAN20101217,Sticko2020}. The
ghost-penalty faces are the interior faces of the active mesh having at least one
cut neighbour,
\begin{equation}
    \mathcal F_h:=\{F=\overline T_+\cap\overline T_-:
    T_+\in T_{\partial\Omega}^h,\ T_-\in T_\Omega^h,\ T_+\neq T_-,\
    |F|_{d-1}>0\},
\end{equation}
where $|F|_{d-1}>0$ selects the pairs sharing a full $(d-1)$-dimensional face and
excludes contacts along lower-dimensional edges or vertices. For such a face,
$h_F:=\operatorname{diam}F$ is the face size, $\bm n_F$ is a unit normal to $F$
(either orientation; the form below is independent of the choice), and the jump of
a quantity $q$ across $F$ is the difference of its one-sided traces from the two
neighbours,
\begin{equation}
    \label{eq:jump}
    [\![q]\!](\bm X):=\lim_{s\to0^+}\big(q(\bm X-s\bm n_F)-q(\bm X+s\bm n_F)\big),
    \qquad \bm X\in F .
\end{equation}
Writing $\partial_{n_F}^k:=(\bm n_F\cdot\Grad)^k$ for the $k$-th normal derivative
and $(\cdot,\cdot)_F$ for the $L^2(F)$ inner product, define the face form
\begin{equation}
    \label{ghost}
    g_F(\bm w,\bm v):=\sum_{k=1}^p\frac{h_F^{2k-1}}{(k!)^2}
    \big([\![\partial_{n_F}^k\bm w]\!],[\![\partial_{n_F}^k\bm v]\!]\big)_F .
\end{equation}
The jumps of the tangential derivatives vanish for $\bm w\in\mathbb V_h$, so only
the normal derivatives up to order $p$ carry information; each summand is
quadratic in the jump, hence unaffected by the sign convention in
\eqref{eq:jump}.
The stabilisation energy is
\begin{equation}
    \label{eq:gpenergy}
    \mathcal E_{\mathrm{Stab}}(\bm u):=
    \frac{\gamma_A}{2}\sum_{F\in\mathcal F_h}g_F(\bm u,\bm u),
    \qquad \gamma_A=\alpha E_e>0 .
\end{equation}
Here $E_e$ is Young's modulus and $\alpha>0$ is dimensionless.
Its second and first variations are, respectively,
\begin{align}
    \label{eq:kstab}
    \mathcal K_{\mathrm{Stab}}(\bar{\bm u};\bm u_h,\bm v_h)
     & =\gamma_A\sum_{F\in\mathcal F_h}g_F(\bm u_h,\bm v_h),     \\
    \label{eq:fstab}
    \mathcal F_{\mathrm{Stab}}(\bar{\bm u},\bm v_h)
     & =\gamma_A\sum_{F\in\mathcal F_h}g_F(\bar{\bm u},\bm v_h).
\end{align}
\subsection{Discretisation and Algorithm Summary}
At a fixed iterate $\bar{\bm u}$, all residual contributions are the first variation of the augmented energy
\begin{equation}
    \label{eq:totalenergy}
    \mathcal{E}_{\text{Total}}(\bm{u}_h;\bar{\bm{u}}) := \mathcal{E}(\bm{u}_h)
    - \int_{\partial\Omega} \bm{n} \cdot \bm{P}(\bm{u}_h) \cdot \bm{u}_h \: dS
    + \int_{\partial\Omega} \frac{\gamma_D \, \eta(\bar{\bm{u}})}{2h} \, |\bm{u}_h|^2 \: dS
    + \mathcal{E}_{\text{Stab}}(\bm{u}_h),
\end{equation}
with $\eta(\bar{\bm u})$ frozen; this is the Nitsche functional~\eqref{eq:nitscheenergy}
augmented by the stabilisation energy,
$\mathcal E_{\text{Total}}=\mathcal E_h+\mathcal E_{\text{Stab}}$. The functional
changes between iterations only through this weight, which settles to
$\eta(\bm u_h)$ at convergence. The residual and assembled symmetric tangent are
\begin{align}
    \mathcal{K}_{\text{Total}}(\bar{\bm{u}}; \bm{u}_h, \bm{v}_h) & = \mathcal{K}_{\Omega}(\bar{\bm{u}}; \bm{u}_h, \bm{v}_h) + \mathcal{K}_{\text{Bound}}(\bar{\bm{u}}; \bm{u}_h, \bm{v}_h) + \mathcal{K}_{\text{Stab}}(\bar{\bm{u}}; \bm{u}_h, \bm{v}_h) \\
    \mathcal{F}_{\text{Total}}(\bar{\bm{u}}, \bm{v}_h)           & = \mathcal{F}_{\Omega}(\bar{\bm{u}}, \bm{v}_h) + \mathcal{F}_{\text{Bound}}(\bar{\bm{u}}, \bm{v}_h) + \mathcal{F}_{\text{Stab}}(\bar{\bm{u}}, \bm{v}_h)
\end{align}
where $\mathcal K_\Omega:=\mathcal K$ and $\mathcal F_\Omega:=\mathcal F$ are the
bulk forms~\eqref{eq:tangent} and~\eqref{nonlinres}, with the integrals taken over
$\Omega$ and $\Gamma_N$.
Because it omits the two derivatives identified above, $\mathcal K_{\mathrm{Total}}$
is a symmetric approximation of the second variation; the resulting iteration is
an energy-based symmetric quasi-Newton method.
The quasi-Newton correction solves
$\mathcal K_{\mathrm{Total}}(\bar{\bm u};\Delta\bm u,\bm v_h)
    =-\mathcal F_{\mathrm{Total}}(\bar{\bm u},\bm v_h)$ for all
$\bm v_h\in\mathbb V_h$, followed by \eqref{eq:newtonupdate}.

\section{Stability and accuracy}
\label{sec:theory}
In this section we analyse the linearised problem solved at each quasi-Newton step. Throughout, $\bar{\bm{u}} \in \mathbb{V}_h$ denotes a fixed iterate, $\Omega_T := \bigcup_{T \in T^h_\Omega} T$ the domain covered by the active mesh, and $\|\cdot\|_{D}$ the $L^2$-norm on a set $D$. We write the linearised tangent form compactly as
\begin{equation}
    A_h(\bm{u}_h, \bm{v}_h) := \mathcal{K}_{\text{Total}}(\bar{\bm{u}}; \bm{u}_h, \bm{v}_h),
\end{equation}
and measure stability in the mesh-dependent norm
\begin{equation}
    \label{eq:triplenorm}
    |||\bm{v}|||^2 := \|\Grad \bm{v}\|^2_{\Omega} + \|h^{-1/2}\bm{v}\|^2_{\partial\Omega} + |\bm{v}|_s^2,
    \qquad |\bm{v}|_s^2 := \gamma_A \sum_{F \in \mathcal{F}_h} g_F(\bm{v}, \bm{v}),
\end{equation}
where the ghost-penalty seminorm $|\cdot|_s$ coincides with $\mathcal{K}_{\text{Stab}}(\bar{\bm{u}}; \bm{v}, \bm{v})$ of \eqref{eq:kstab}. Note that the stabilisation form does not depend on the iterate $\bar{\bm{u}}$, so the norm is iterate-independent.
The analysis rests on a single assumption on the structure of the material response at the iterate.

\begin{assumption}[Uniform ellipticity along the quasi-Newton path]
    \label{ass:ellipticity}
    \leavevmode
    \begin{itemize}
        \item[(a)] \emph{(Uniform pointwise strain-ellipticity.)} \\
              There exist constants $0 < c_L \leq C_L$ such that
              \begin{equation}
                  c_L \,|\operatorname{sym}\bm{M}|^2 \;\leq\; \bm{M} : \mathbb{L}(\bar{\bm{u}}(\bm{X})) : \bm{M} \;\leq\; C_L |\bm{M}|^2
                  \qquad \forall \bm{M} \in \mathbb{R}^{d \times d}, \; \bm{X} \in \Omega_T ,
              \end{equation}
              where $\operatorname{sym}\bm M := \tfrac12(\bm M + \bm M^\top)$.
        \item[(b)] \emph{(Non-degeneracy and smoothness.)} \\
              $\Psi \in C^3$ in a neighbourhood of the deformation
              gradients attained on $\Omega_T$, and the Jacobian stays bounded away from degeneracy,
              $J(\bar{\bm{u}}(\bm{X})) \geq J_0$ on $\Omega_T$ for some constant $J_0 > 0$, so that the third derivative is bounded there,
              $\sup_{\Omega_T} |\partial^3\Psi| =: C_{L'} < \infty$.
    \end{itemize}
\end{assumption}
The two parts have different status. Here, a \emph{branch of solutions} means a smooth, continuously parametrised
family of exact solutions $\loadf \mapsto \bm{u}(\loadf)$, where $\loadf$ is the load factor (a loading or continuation parameter); a
point on the branch is \emph{regular} when the tangent operator is an isomorphism there.
For the smooth energies used below, part~(b) holds on compact branch segments away from material collapse and provides
the bulk Lipschitz modulus $C_{L'}$ used for assumption~(J1) of Theorem~\ref{thm:brr}.

Part~(a) is posed on symmetric strains because full-gradient ellipticity is incompatible with frame indifference: at a
stress-free state, $\bm M:\mathbb L:\bm M=0$ for every skew-symmetric $\bm M$. The stated strain bound holds with
$c_L=2\mu_e$ for isotropic linear elasticity and at stress-free states of the hyperelastic models used below, while
Korn's inequality \eqref{eq:korn} restores full-gradient control in the coercivity proof.

Pointwise ellipticity is nevertheless more restrictive than branch regularity: part~(a) implies regularity but is not
implied by it, and neither does it follow from polyconvexity. Regularity is a global variational property, compatible
with pointwise indefiniteness of $\mathbb L$; part~(a) demands pointwise positive definiteness on symmetric strains at
every point of $\Omega_T$, including the exterior strip $\Omega_T\setminus\Omega$. Together with \eqref{eq:korn}, this assumption yields cut-independent coercivity
(Proposition~\ref{prop:coercivity}), an SPD tangent, the $O(h^{-2})$ condition-number bound, and the descent property
of Lemma~\ref{lem:descent}. It does, however, narrow the range of situations the analysis covers: under severe
compression or near buckling, $\mathbb L$ may lose pointwise positive definiteness while the solution branch remains
regular; the present analysis then ceases to apply, although Newton's method may still converge.

We further recall three standard ingredients of the analysis, with positive constants $C_T$, $C_G$ and $C_K$, the
first two of them independent of how $\partial\Omega$ cuts the mesh. First, the inverse trace inequality for polynomials on cut cells \cite{HANSBO20025537,
    massing2014stabilized}: for all $T \in T^h_{\partial\Omega}$ and every function $\bm{w}_h$ that is polynomial of degree
at most $p$ on $T$,
\begin{equation}
    \label{eq:cuttrace}
    h \, \| \bm{w}_h \|^2_{\partial\Omega \cap T} \leq C_T \, \|\bm{w}_h\|^2_{T} ;
\end{equation}
we will apply it to both $\bm{w}_h = \bm{v}_h$ and $\bm{w}_h = \Grad \bm{v}_h$. Note that the right-hand side involves the \emph{full} cell $T$, not only $T \cap \Omega$; this is precisely where the ghost penalty enters, through the second ingredient, the extension property \cite{BURMAN20101217, Burman_Claus_Hansbo_Larson_Massing_2014, Burman_Hansbo_Larson_Zahedi_2025}:
\begin{equation}
    \label{eq:extension}
    \|\Grad \bm{v}_h\|^2_{\Omega_T} \leq C_G \left( \|\Grad \bm{v}_h\|^2_{\Omega} + \gamma_A^{-1} |\bm{v}_h|_s^2 \right)
    \qquad \forall \bm{v}_h \in \mathbb{V}_h ,
\end{equation}
which states that the ghost penalty controls the part of the gradient that lives outside the physical domain.
The third ingredient converts the strain control provided by Assumption~\ref{ass:ellipticity}(a) into control of the
full gradient: since the discrete displacements are $H^1$-conforming, Korn's inequality with a boundary term holds on
the fixed Lipschitz domain $\Omega$,
\begin{equation}
    \label{eq:korn}
    \|\Grad \bm{w}\|^2_{\Omega} \leq C_K \left( \|\operatorname{sym}\Grad \bm{w}\|^2_{\Omega} + \|\bm{w}\|^2_{\partial\Omega} \right)
    \qquad \forall \bm{w} \in \big(H^1(\Omega)\big)^d ,
\end{equation}
with $C_K$ depending only on $\Omega$; it follows from Korn's second inequality \cite{Braess2013-mg} by the usual
compactness (Petree--Tartar) argument, the only rigid motion with vanishing trace on $\partial\Omega$ being zero (cf.
its use in fitted Nitsche formulations of elasticity \cite{Hansbo_2017}). Since $\partial\Omega$ is covered by cut
cells, the boundary term is dominated by the Nitsche penalty scaling, $\|\bm{w}\|^2_{\partial\Omega} \leq h_0
    \,\|h^{-1/2}\bm{w}\|^2_{\partial\Omega}$ for $h \leq h_0$. Here $h_0 > 0$ is a reference mesh size, fixed once and
for all and small enough that the active domain $\Omega_T$ lies in a fixed neighbourhood of $\Omega$; all statements
below are understood for $h \leq h_0$. Being posed on the physical domain alone, \eqref{eq:korn}
is independent of the mesh and of the cut.

\begin{proposition}[Cut-independent coercivity and continuity]
    \label{prop:coercivity}
    Let Assumption~\ref{ass:ellipticity} hold, and set $\eta_{\min} := \inf_{\partial\Omega} \eta(\bar{\bm{u}}) \geq 2\mu_e$ and $\eta_{\max} := \sup_{\partial\Omega} \eta(\bar{\bm{u}})$, with the penalty weight $\eta$ as in \eqref{eq:penaltyweight}. There exists $\gamma_0 > 0$, depending only on $C_T$, $C_G$, $C_K$, $h_0$, $c_L$, $C_L$, $\eta_{\min}$, $\gamma_A/E_e$ and the polynomial degree $p$ (through the combination $C_L^2 / (\min\{c_L,1\}\,\eta_{\min})$ and the trace/extension/Korn constants), such that for all $h \leq h_0$ and $\gamma_D \geq \gamma_0$,
    \begin{align}
        A_h(\bm{v}_h, \bm{v}_h) & \geq c_s \, |||\bm{v}_h|||^2                 &  & \forall \bm{v}_h \in \mathbb{V}_h ,           \\
        A_h(\bm{u}_h, \bm{v}_h) & \leq C_s \, |||\bm{u}_h||| \, |||\bm{v}_h||| &  & \forall \bm{u}_h, \bm{v}_h \in \mathbb{V}_h ,
    \end{align}
    where $c_s, C_s > 0$ are independent of $h$ and of the position of $\partial\Omega$ relative to the mesh.
\end{proposition}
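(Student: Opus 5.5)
The coercivity proof follows the standard Nitsche/ghost-penalty argument, with the strain-ellipticity of Assumption~\ref{ass:ellipticity}(a) and Korn's inequality \eqref{eq:korn} replacing the usual full-gradient bound. Setting $\bm u_h=\bm v_h$ in $A_h$ splits it into four contributions:
\begin{itemize}
\item the bulk term $\int_\Omega \Grad\bm v_h:\mathbb L:\Grad\bm v_h \geq c_L\|\operatorname{sym}\Grad\bm v_h\|^2_\Omega$;
\item the penalty term $\geq \gamma_D\eta_{\min}\|h^{-1/2}\bm v_h\|^2_{\partial\Omega}$;
\item the stabilisation, which equals $|\bm v_h|_s^2$ exactly;
\item the consistency term $-2\int_{\partial\Omega}\bm n\cdot(\mathbb L:\Grad\bm v_h)\cdot\bm v_h$.
\end{itemize}
The first step converts strain control into gradient control on $\Omega$. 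By \eqref{eq:korn} and $\|\bm v_h\|^2_{\partial\Omega}\le h_0\|h^{-1/2}\bm v_h\|^2_{\partial\Omega}$,
\[
c_L\|\operatorname{sym}\Grad\bm v_h\|^2_\Omega \geq \tfrac{c_L}{C_K}\|\Grad\bm v_h\|^2_\Omega - c_L h_0\|h^{-1/2}\bm v_h\|^2_{\partial\Omega}.
\]
The negative boundary part is absorbed later by the penalty once $\gamma_D\eta_{\min}$ is large.

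The main obstacle is bounding the consistency term cut-independently, which requires controlling $\Grad\bm v_h$ on whole cut cells. Using $|\mathbb L|\le C_L$ (pointwise, by (a) and symmetry), Cauchy--Schwarz and Young with a parameter $\epsilon$ give
\[
2\Big|\int_{\partial\Omega}\bm n\cdot(\mathbb L:\Grad\bm v_h)\cdot\bm v_h\Big| \le \epsilon\, h\|\Grad\bm v_h\|^2_{\partial\Omega} + \epsilon^{-1}C_L^2\|h^{-1/2}\bm v_h\|^2_{\partial\Omega}.
\]
Next apply \eqref{eq:cuttrace} cellwise to $\Grad\bm v_h$. This gives $h\|\Grad\bm v_h\|^2_{\partial\Omega}\le C_T\|\Grad\bm v_h\|^2_{\Omega_T}$, at the cost of involving the full cells. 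Then invoke \eqref{eq:extension}:
\[
\|\Grad\bm v_h\|^2_{\Omega_T}\le C_G\big(\|\Grad\bm v_h\|^2_\Omega+\gamma_A^{-1}|\bm v_h|_s^2\big).
\]
Choose $\epsilon$ so that $\epsilon C_TC_G \le \tfrac12\min\{c_L/C_K,1\}$ for the bulk part and $\epsilon C_TC_G\gamma_A^{-1}\le\tfrac12$ for the stabilisation part. Since $\gamma_A=\alpha E_e$, the latter condition is where $\gamma_A/E_e$ enters; I expect the proof to need some dimensional normalisation of $\mathbb L$ against $E_e$ here.

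With $\epsilon$ fixed, the total boundary coefficient is $\gamma_D\eta_{\min}-c_Lh_0-\epsilon^{-1}C_L^2$. Take $\gamma_0$ so that this is at least $\tfrac12\gamma_D\eta_{\min}$, for instance $\gamma_0\sim (C_L^2/\epsilon+c_Lh_0)/\eta_{\min}$. This reproduces the advertised dependence $C_L^2/(\min\{c_L,1\}\eta_{\min})$. Then $c_s$ is the minimum of the three remaining coefficients, and none of them depends on $h$ or on the cut position.

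Continuity follows by term-wise Cauchy--Schwarz. The bulk term is bounded by $C_L\|\Grad\bm u_h\|_\Omega\|\Grad\bm v_h\|_\Omega$. Each consistency term is bounded by $C_L\|h^{1/2}\Grad\bm u_h\|_{\partial\Omega}\|h^{-1/2}\bm v_h\|_{\partial\Omega}$. The factor $\|h^{1/2}\Grad\bm u_h\|_{\partial\Omega}$ is controlled by $(C_TC_G)^{1/2}$ times the triple norm, again via \eqref{eq:cuttrace} and \eqref{eq:extension}. The penalty term is bounded by $\gamma_D\eta_{\max}$ times the boundary parts of the two norms, and the stabilisation term by $|\bm u_h|_s|\bm v_h|_s$. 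Collecting these gives $C_s$ depending on $C_L$, $C_T$, $C_G$, $\gamma_D\eta_{\max}$ and $\gamma_A/E_e$, all cut-independent.
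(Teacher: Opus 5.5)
Your proposal is correct and follows essentially the same route as the paper. Both use the four-term split, Young's inequality on the Nitsche consistency term, the cut trace inequality \eqref{eq:cuttrace} applied to $\Grad\bm v_h$ followed by the extension property \eqref{eq:extension}, absorption with $\varepsilon$ small and then $\gamma_D$ large, and termwise Cauchy--Schwarz for continuity; the only difference is that you apply Korn's inequality \eqref{eq:korn} once, up front, to the bulk strain term, whereas the paper applies it first to the absorbed extension term and again at the end, which is a cosmetic reordering giving the same dependence of $\gamma_0$ and $c_s$.
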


\begin{proof}
    By definition of $A_h$ and symmetry of the two Nitsche terms in \eqref{eq:kbound},
    \begin{equation*}
        A_h(\bm{v}_h, \bm{v}_h) = \int_\Omega \Grad\bm{v}_h : \mathbb{L}(\bar{\bm{u}}) : \Grad\bm{v}_h \, dV
        - 2 \int_{\partial\Omega} \bm{n}\cdot(\mathbb{L}(\bar{\bm{u}}):\Grad\bm{v}_h)\cdot\bm{v}_h \, dS
        + \gamma_D \int_{\partial\Omega} \frac{\eta(\bar{\bm{u}})}{h} |\bm{v}_h|^2 \, dS + |\bm{v}_h|_s^2 .
    \end{equation*}
    The first term is bounded below by $c_L \|\operatorname{sym}\Grad\bm{v}_h\|^2_\Omega$ by Assumption~\ref{ass:ellipticity}(a), and the penalty term by $\gamma_D \eta_{\min} \|h^{-1/2}\bm{v}_h\|^2_{\partial\Omega}$. For the boundary term, Cauchy--Schwarz, the trace inequality \eqref{eq:cuttrace} applied cell-wise, and Young's inequality with $\varepsilon > 0$ give
    \begin{equation*}
        2 \left| \int_{\partial\Omega} \bm{n}\cdot(\mathbb{L}:\Grad\bm{v}_h)\cdot\bm{v}_h \, dS \right|
        \leq 2 C_L \, \|h^{1/2}\Grad\bm{v}_h\|_{\partial\Omega} \|h^{-1/2}\bm{v}_h\|_{\partial\Omega}
        \leq \varepsilon C_L C_T \|\Grad\bm{v}_h\|^2_{\Omega_T} + \frac{C_L}{\varepsilon} \|h^{-1/2}\bm{v}_h\|^2_{\partial\Omega} .
    \end{equation*}
    The term $\|\Grad\bm{v}_h\|^2_{\Omega_T}$ is absorbed using the extension property \eqref{eq:extension} followed by
    Korn's inequality \eqref{eq:korn} with $\|\bm{v}_h\|^2_{\partial\Omega} \leq h_0 \|h^{-1/2}\bm{v}_h\|^2_{\partial\Omega}$:
    \begin{align*}
        \varepsilon C_L C_T \|\Grad\bm{v}_h\|^2_{\Omega_T}
         & \leq \varepsilon C_L C_T C_G \left( \|\Grad\bm{v}_h\|^2_{\Omega} + \gamma_A^{-1} |\bm{v}_h|_s^2 \right)                                \\
         & \leq \varepsilon C_L C_T C_G \left( C_K \|\operatorname{sym}\Grad\bm{v}_h\|^2_{\Omega}
        + C_K h_0 \|h^{-1/2}\bm{v}_h\|^2_{\partial\Omega} + \gamma_A^{-1} |\bm{v}_h|_s^2 \right) .
    \end{align*}
    Choosing $\varepsilon$ small enough that $\varepsilon C_L C_T C_G \max\{C_K, C_K h_0, \gamma_A^{-1}\} \leq \tfrac12\min\{c_L, 1\}$, and then $\gamma_D \geq \gamma_0 := 4 C_L / (\eta_{\min} \varepsilon)$ (so that both $C_L/\varepsilon$ and the absorbed $C_K h_0$-contribution are covered by half the penalty term), all negative contributions are absorbed, leaving
    \begin{equation*}
        A_h(\bm{v}_h, \bm{v}_h) \geq \tfrac{c_L}{2} \|\operatorname{sym}\Grad\bm{v}_h\|^2_{\Omega} + \tfrac{\gamma_D\eta_{\min}}{2} \|h^{-1/2}\bm{v}_h\|^2_{\partial\Omega} + \tfrac12 |\bm{v}_h|_s^2 .
    \end{equation*}
    A final application of \eqref{eq:korn}, again with the boundary term dominated by the penalty term, converts the
    strain term into $\|\Grad\bm{v}_h\|^2_{\Omega}$ and yields coercivity in the triple norm \eqref{eq:triplenorm} with
    $c_s$ depending only on $c_L$, $\gamma_D\eta_{\min}$, $C_K$ and $h_0$.

    For continuity, we bound each contribution to $A_h(\bm{u}_h, \bm{v}_h)$ separately (the two Nitsche terms share the
    same bound). The volume term satisfies, by Assumption~\ref{ass:ellipticity} and Cauchy--Schwarz,
    \begin{equation*}
        \left| \int_\Omega \Grad\bm{u}_h : \mathbb{L} : \Grad\bm{v}_h \, dV \right| \leq C_L \|\Grad\bm{u}_h\|_\Omega \|\Grad\bm{v}_h\|_\Omega \leq C_L \, |||\bm{u}_h||| \, |||\bm{v}_h||| .
    \end{equation*}
    For each of the two Nitsche terms, Cauchy--Schwarz, the trace inequality \eqref{eq:cuttrace} applied to $\Grad\bm{u}_h$ cell-wise, and the extension property \eqref{eq:extension} give
    \begin{align*}
        \left| \int_{\partial\Omega} \bm{n}\cdot(\mathbb{L}:\Grad\bm{u}_h)\cdot\bm{v}_h \, dS \right|
         & \leq C_L \, \|h^{1/2}\Grad\bm{u}_h\|_{\partial\Omega} \, \|h^{-1/2}\bm{v}_h\|_{\partial\Omega}
        \leq C_L \sqrt{C_T} \, \|\Grad\bm{u}_h\|_{\Omega_T} \, \|h^{-1/2}\bm{v}_h\|_{\partial\Omega}      \\
         & \leq C_L \sqrt{C_T C_G \max\{1, \gamma_A^{-1}\}} \; |||\bm{u}_h||| \, |||\bm{v}_h||| ,
    \end{align*}
    and the same bound with the roles of $\bm{u}_h$ and $\bm{v}_h$ exchanged. The penalty term is bounded by
    $$\gamma_D \eta_{\max} \|h^{-1/2}\bm{u}_h\|_{\partial\Omega} \|h^{-1/2}\bm{v}_h\|_{\partial\Omega} \leq \gamma_D \eta_{\max} \, |||\bm{u}_h||| \, |||\bm{v}_h|||,$$
    and the ghost penalty term by $|\bm{u}_h|_s |\bm{v}_h|_s \leq |||\bm{u}_h||| \, |||\bm{v}_h|||$, both directly by Cauchy--Schwarz. Summing these bounds yields the claim with
    \begin{equation*}
        C_s = C_L + 2 C_L \sqrt{C_T C_G \max\{1, \gamma_A^{-1}\}} + \gamma_D \eta_{\max} + 1 .
    \end{equation*}
\end{proof}

Proposition~\ref{prop:coercivity} makes the choice of the adaptive weight \eqref{eq:penaltyweight} precise: the
coercivity threshold is $\gamma_0 = 4C_L/(\eta_{\min}\varepsilon)$, and since $\eta(\bar{\bm{u}}) \geq
    \tfrac{1}{2}\|\mathbb{L}(\bar{\bm{u}})\|_1$ tracks the local stiffness, the ratio $C_L/\eta_{\min}$, and hence the
required $\gamma_D$, remains $O(1)$ even when $\mathbb{L}(\bar{\bm{u}})$ departs substantially from its small-strain
magnitude. A fixed weight scaling with Young's modulus alone would instead require $\gamma_D \gtrsim
    \|\mathbb{L}(\bar{\bm{u}})\|/E_e$.

\begin{corollary}[Well-posedness of the quasi-Newton step]
    \label{cor:wellposed}
    Under the assumptions of Proposition~\ref{prop:coercivity}, each linearised problem
    $\mathcal{K}_{\text{Total}}(\bar{\bm{u}}; \Delta\bm{u}, \bm{v}_h) = -\mathcal{F}_{\text{Total}}(\bar{\bm{u}}, \bm{v}_h)$
    admits a unique solution $\Delta\bm{u} \in \mathbb{V}_h$, satisfying $|||\Delta\bm{u}||| \leq c_s^{-1} \sup_{\bm{v}_h} \mathcal{F}_{\text{Total}}(\bar{\bm{u}}, \bm{v}_h)/|||\bm{v}_h|||$, with stability constant independent of the cut configuration.
\end{corollary}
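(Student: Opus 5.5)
The plan is to apply the Lax--Milgram lemma, in its finite-dimensional form, on $\mathbb V_h$ equipped with $|||\cdot|||$, using Proposition~\ref{prop:coercivity} for all the stability content.

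The first step is to check that $|||\cdot|||$ is a norm on $\mathbb V_h$, and not only a seminorm. Suppose $|||\bm v_h|||=0$. Then $\Grad\bm v_h=0$ on $\Omega$ and $\bm v_h=0$ on $\partial\Omega$, so the extension property \eqref{eq:extension} gives $\Grad\bm v_h=0$ on all of $\Omega_T$. Since $\bm v_h$ is continuous on the active mesh, it is constant on each connected component of $\Omega_T$. Each such component meets $\Omega$ and hence touches $\partial\Omega$, where the trace vanishes, so $\bm v_h=0$. The space $\mathbb V_h$ is finite dimensional and therefore complete in this norm.

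Next, $A_h$ is bilinear on $\mathbb V_h$, and Proposition~\ref{prop:coercivity} shows it is bounded and coercive with constants $C_s$ and $c_s$ independent of the cut. The right-hand side $\bm v_h\mapsto-\mathcal F_{\mathrm{Total}}(\bar{\bm u},\bm v_h)$ is linear in $\bm v_h$ for fixed $\bar{\bm u}$. Every linear functional on a finite-dimensional normed space is bounded, so the dual norm $\sup_{\bm v_h}\mathcal F_{\mathrm{Total}}(\bar{\bm u},\bm v_h)/|||\bm v_h|||$ is finite.

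Existence and uniqueness can then be argued directly. Coercivity implies $A_h(\bm w,\bm w)>0$ for $\bm w\neq0$, so the square stiffness matrix has trivial kernel and is therefore invertible. For the bound, test with $\bm v_h=\Delta\bm u$:
\[
c_s|||\Delta\bm u|||^2\le A_h(\Delta\bm u,\Delta\bm u)=-\mathcal F_{\mathrm{Total}}(\bar{\bm u},\Delta\bm u)\le \Big(\sup_{\bm v_h}\frac{\mathcal F_{\mathrm{Total}}(\bar{\bm u},\bm v_h)}{|||\bm v_h|||}\Big)|||\Delta\bm u|||.
\]
The supremum dominates $-\mathcal F_{\mathrm{Total}}(\bar{\bm u},\Delta\bm u)/|||\Delta\bm u|||$ because replacing $\bm v_h$ by $-\bm v_h$ flips the sign, so the supremum equals the dual norm. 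Dividing by $|||\Delta\bm u|||$ gives the stated estimate. Its constant is $c_s^{-1}$, which is cut-independent by Proposition~\ref{prop:coercivity}.

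I expect the only point needing care to be the norm property of $|||\cdot|||$ on the whole active space. In particular, the degrees of freedom outside $\Omega$ are controlled only through the ghost penalty, and this is handled by \eqref{eq:extension} together with the connectedness argument above.
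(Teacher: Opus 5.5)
Your proposal is correct and follows the paper's route: the paper's proof simply invokes Lax--Milgram on the finite-dimensional space $(\mathbb V_h,|||\cdot|||)$, using the coercivity and continuity of Proposition~\ref{prop:coercivity}. Your explicit check that $|||\cdot|||$ is a genuine norm on the whole active space (via \eqref{eq:extension} and the connectedness/trace argument) and your test-with-$\Delta\bm u$ derivation of the bound fill in details the paper leaves implicit.
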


\begin{proof}
    Follows directly from Lax--Milgram on the finite-dimensional space $\mathbb{V}_h$ equipped with $|||\cdot|||$.
\end{proof}

The proof of the condition number bound follows the argument of \cite{BURMAN20102680,
    Burman_Hansbo_Larson_Zahedi_2025}, with the elasticity form in place of the Laplacian. It requires three discrete
inequalities relating the triple norm \eqref{eq:triplenorm}, the $L^2$-norm on the active mesh, and the Euclidean norm
of the coefficient vector; we collect them in the following lemma. Here and below, $\bm{V} \in \mathbb{R}^N$ denotes
the coefficient vector of $\bm{v}_h \in \mathbb{V}_h$ with respect to the nodal basis, and $|\bm{V}|$ its Euclidean
norm.

\begin{lemma}[Discrete inequalities on the active mesh]
    \label{lem:discrete}
    Let the background mesh be quasi-uniform and $h \leq h_0$. There exist constants $C_{\mathrm{inv}}, C_P, c_M, C_M > 0$, independent of $h$ and of the position of $\partial\Omega$ relative to the mesh, such that for all $\bm{v}_h \in \mathbb{V}_h$:
    \begin{align}
        \label{eq:lemglobalinverse}
        \textnormal{(i)}   & \quad |||\bm{v}_h|||^2 \leq C_{\mathrm{inv}} \, h^{-2} \, \|\bm{v}_h\|^2_{\Omega_T} ,             \\
        \label{eq:lempoincare}
        \textnormal{(ii)}  & \quad \|\bm{v}_h\|_{\Omega_T} \leq C_P \, |||\bm{v}_h||| ,                                        \\
        \label{eq:lemmassequiv}
        \textnormal{(iii)} & \quad c_M \, h^{d} \, |\bm{V}|^2 \leq \|\bm{v}_h\|^2_{\Omega_T} \leq C_M \, h^{d} \, |\bm{V}|^2 .
    \end{align}
\end{lemma}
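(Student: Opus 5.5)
The three inequalities are essentially independent, and I would prove them in the order (iii), (i), (ii). Each reduces to standard facts for a quasi-uniform Cartesian background mesh, combined with the cut-independent tools \eqref{eq:cuttrace}, \eqref{eq:extension} and \eqref{eq:korn}. The guiding principle is to work on full active cells $T\in T_\Omega^h$ whenever possible, so that no constant depends on $|T\cap\Omega|$.

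\emph{Step 1: mass equivalence (iii).} On each full cell $T$ the map from local nodal coefficients to $\bm v_h|_T$ is obtained by affine scaling from the reference cube. Norm equivalence on the finite-dimensional space $[\mathcal Q_p]^d$ then gives $c\,h^d|\bm V_T|^2\le\|\bm v_h\|_T^2\le C\,h^d|\bm V_T|^2$. Summing over $T\in T_\Omega^h$, each global degree of freedom is shared by at most $(p+1)^d$-bounded many cells, and every degree of freedom of $\mathbb V_h$ belongs to some active cell. This yields (iii) with constants depending only on $p$, $d$ and quasi-uniformity. It is insensitive to the cut because only whole cells appear.

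\emph{Step 2: global inverse estimate (i).} I would bound the three parts of $|||\bm v_h|||^2$ separately.
\begin{itemize}
\item Gradient part: $\|\Grad\bm v_h\|_\Omega^2\le\|\Grad\bm v_h\|_{\Omega_T}^2\le C h^{-2}\|\bm v_h\|_{\Omega_T}^2$ by the elementwise inverse inequality on full cells.
\item Boundary part: apply \eqref{eq:cuttrace} to $\bm w_h=\bm v_h$ on each cut cell, giving $\|h^{-1/2}\bm v_h\|_{\partial\Omega}^2\le C_T h^{-2}\sum_T\|\bm v_h\|_T^2$.
\item Ghost penalty: for a face $F$ shared by $T_\pm$, use the discrete trace inequality $h_F\|\partial^k_{n_F}\bm v_h|_{T_\pm}\|_F^2\le C\|\partial^k\bm v_h\|_{T_\pm}^2$ together with the inverse inequality $\|\partial^k\bm v_h\|_{T}\le Ch^{-k}\|\bm v_h\|_T$. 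Each summand $h_F^{2k-1}\|[\![\partial^k_{n_F}\bm v_h]\!]\|_F^2$ is then bounded by $Ch^{-2}\|\bm v_h\|_{T_+\cup T_-}^2$, with $\gamma_A$ entering as a constant.
\end{itemize}
Finite overlap of the face patches finishes (i).

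\emph{Step 3: Poincaré-type bound (ii).} This is the main obstacle, since it must control the $L^2$ norm on all of $\Omega_T$, including the exterior strip, by an energy norm, uniformly in the cut. I would proceed in three stages.
\begin{itemize}
\item First, on the physical domain, use a Poincaré--Friedrichs inequality with a boundary term, $\|\bm w\|_\Omega^2\le C(\|\Grad\bm w\|_\Omega^2+\|\bm w\|_{\partial\Omega}^2)$. This follows, like \eqref{eq:korn}, by compactness, and its boundary term is dominated by $h_0\|h^{-1/2}\bm w\|_{\partial\Omega}^2$. Hence $\|\bm v_h\|_\Omega\le C|||\bm v_h|||$.
\item Second, extend to $\Omega_T$ via the $L^2$ analogue of \eqref{eq:extension}: $\|\bm v_h\|_{\Omega_T}^2\le C(\|\bm v_h\|_\Omega^2+h^2\gamma_A^{-1}|\bm v_h|_s^2)$. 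I would prove this by the same face-chaining argument that underlies \eqref{eq:extension}. Each cut cell is connected to an interior cell through a bounded number of faces in $\mathcal F_h$. Across a face, $\bm v_{h}|_{T_+}$ and the polynomial extension of $\bm v_h|_{T_-}$ differ by a polynomial determined by the normal-derivative jumps, $\sum_k\frac{1}{k!}[\![\partial^k_{n_F}\bm v_h]\!](x_n)^k$. Its $L^2(T_+)$ norm is bounded by $\sum_k h^{2k+1}\|[\![\partial^k_{n_F}\bm v_h]\!]\|_F^2/(k!)^2$, which is exactly $h^2 g_F$ with the weights of \eqref{ghost}; the $k=0$ jump vanishes by continuity. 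Norm equivalence of polynomials on neighbouring cells then controls the extension on $T_+$ by $\|\bm v_h\|_{T_-}$.
\item Third, combine the two bounds, using $h\le h_0$, to get (ii).
\end{itemize}
The delicate point is the uniform bound on the chain length, which relies on $h\le h_0$ and a fat-interior-cell condition; I would take this from the cited references, as in the paper's use of \eqref{eq:extension}.
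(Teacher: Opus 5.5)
\textbf{Verdict: correct, with a different route for (ii).} Your parts (iii) and (i) match the paper's proof essentially step for step: full-cell norm equivalence, then the elementwise inverse inequality, the cut trace inequality \eqref{eq:cuttrace}, and face-trace plus $k$-th order inverse inequalities for the ghost penalty.

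\textbf{The paper's route for (ii).} It invokes a continuous Friedrichs inequality posed directly on the active domain,
$\|\bm w\|_{\Omega_T}\le C_{P,0}(\|\Grad\bm w\|_{\Omega_T}+\|\bm w\|_{\partial\Omega})$ for all $\bm w\in(H^1(\Omega_T))^d$. It then closes with the gradient extension property \eqref{eq:extension}, which is already available, and the Nitsche part of $|||\cdot|||$.

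\textbf{Your route for (ii).} You apply Friedrichs only on the fixed physical domain $\Omega$. You then carry $L^2$ control into the fictitious strip with a discrete $L^2$ extension bound,
$\|\bm v_h\|^2_{\Omega_T}\lesssim\|\bm v_h\|^2_\Omega+h^2\gamma_A^{-1}|\bm v_h|_s^2$, proved by face-chaining. Each step of the chain uses the Taylor expansion in the normal coordinate of the difference polynomial. This expansion is exact here: the Cartesian faces are axis-aligned and $\mathcal Q_p$ has degree $p$ in each variable, so the jumps of orders $k=1,\dots,p$ in \eqref{ghost} determine that polynomial completely.

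\textbf{What each approach buys.}
\begin{itemize}
\item The paper's argument is shorter because it reuses \eqref{eq:extension}. However, it asserts a Friedrichs constant ``depending only on $\Omega$ and $h_0$'' on a domain $\Omega_T$ that changes with $h$ and with the cut. That uniformity does not follow from compactness on a single domain; it needs its own chaining-type argument, which the paper leaves to the references.
\item Your argument runs the compactness step on one fixed Lipschitz domain, where the constant is clearly independent of mesh and cut. The cost is one extra, standard lemma.
\item Both arguments rest on the same bounded chain-length hypothesis behind \eqref{eq:extension}, so you introduce no new assumption.
\item Your intermediate estimate is also slightly sharper, since the ghost penalty enters with an extra factor $h^2$.
\end{itemize}
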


\begin{proof}
    \textit{(iii)} is the standard norm equivalence for nodal bases on shape-regular, quasi-uniform meshes, see e.g. \cite{Braess2013-mg}; it is unaffected by the cut because the norm is taken over the \emph{full} cells of the active mesh.

    \textit{(i)} We bound the three contributions to $|||\bm{v}_h|||^2$ separately, each by inequalities posed on full cells, hence with cut-independent constants. For the gradient term, the standard element-wise inverse inequality $\|\Grad\bm{v}_h\|_{T} \leq C h^{-1} \|\bm{v}_h\|_{T}$ \cite{Braess2013-mg} gives $\|\Grad\bm{v}_h\|^2_{\Omega} \leq \|\Grad\bm{v}_h\|^2_{\Omega_T} \leq C h^{-2} \|\bm{v}_h\|^2_{\Omega_T}$. For the boundary term, the cut trace inequality \eqref{eq:cuttrace} applied to $\bm{w}_h = \bm{v}_h$ on each cut cell yields
    \begin{equation*}
        \|h^{-1/2}\bm{v}_h\|^2_{\partial\Omega} = \sum_{T \in T^h_{\partial\Omega}} h^{-1} \|\bm{v}_h\|^2_{\partial\Omega \cap T}
        \leq C_T h^{-2} \sum_{T \in T^h_{\partial\Omega}} \|\bm{v}_h\|^2_{T} \leq C_T h^{-2} \|\bm{v}_h\|^2_{\Omega_T} .
    \end{equation*}
    For the ghost penalty term, each face contribution in \eqref{ghost} is bounded by the triangle inequality, the (uncut) trace inverse inequality $\|\bm{w}_h\|^2_{\partial T} \leq C h^{-1} \|\bm{w}_h\|^2_{T}$ on the faces of $T_\pm$, and
    the $k$-th order element-wise inverse inequality $\|D^k \bm{v}_h\|_{T} \leq C_k h^{-k} \|\bm{v}_h\|_{T}$:
    \begin{equation*}
        \frac{h^{2k-1}}{(k!)^2} \big\| [\partial^k_n \bm{v}_h] \big\|^2_F
        \leq \frac{2 h^{2k-1}}{(k!)^2} \sum_{T \in \{T_+, T_-\}} \|\partial^k_n \bm{v}_h\|^2_{\partial T}
        \leq C h^{2k-2} \sum_{T \in \{T_+, T_-\}} \|D^k \bm{v}_h\|^2_{T}
        \leq C_k \, h^{-2} \sum_{T \in \{T_+, T_-\}} \|\bm{v}_h\|^2_{T} .
    \end{equation*}
    Summing over $k = 1, \dots, p$ and over $F \in \mathcal{F}_h$, and noting that each cell appears in at most $2d$ faces, gives $|\bm{v}_h|^2_s \leq C \gamma_A h^{-2} \|\bm{v}_h\|^2_{\Omega_T}$. Adding the three bounds proves (i).

    \textit{(ii)} Since $\partial\Omega$ is Lipschitz and $\Omega_T$ is contained in a fixed neighbourhood of $\Omega$ of width at most $2 h_0$, the Friedrichs inequality with boundary trace control holds on $\Omega_T$ \cite{Evans2022-bl, BURMAN20102680}: there exists $C_{P,0}$, depending only on $\Omega$ and $h_0$, such that
    \begin{equation*}
        \|\bm{w}\|_{\Omega_T} \leq C_{P,0} \left( \|\Grad\bm{w}\|_{\Omega_T} + \|\bm{w}\|_{\partial\Omega} \right) \qquad \forall \bm{w} \in \big(H^1(\Omega_T)\big)^d .
    \end{equation*}
    Applying this to $\bm{v}_h$, the gradient on the right-hand side is controlled through the extension property \eqref{eq:extension}, $\|\Grad\bm{v}_h\|_{\Omega_T} \leq \sqrt{C_G \max\{1, \gamma_A^{-1}\}} \, |||\bm{v}_h|||$, and the trace term through $\|\bm{v}_h\|_{\partial\Omega} = h^{1/2} \|h^{-1/2}\bm{v}_h\|_{\partial\Omega} \leq h_0^{1/2} \, |||\bm{v}_h|||$. Both constants are independent of the cut, which proves (ii).
\end{proof}

\begin{corollary}[Cut-independent conditioning]
    \label{cor:conditioning}
    Under the assumptions of Proposition~\ref{prop:coercivity} and Lemma~\ref{lem:discrete}, the tangent matrix satisfies
    \begin{equation}
        \operatorname{cond}(\mathbf{K}_{\text{Total}}) \leq C h^{-2},
    \end{equation}
    with $C$ independent of the position of $\partial\Omega$ relative to the mesh.
\end{corollary}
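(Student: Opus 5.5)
Since $\mathbf K_{\text{Total}}$ is symmetric (the two Nitsche terms in \eqref{eq:kbound} are mirror images, and the bulk and ghost-penalty forms are symmetric), and positive definite by Proposition~\ref{prop:coercivity}, its spectral condition number is $\lambda_{\max}/\lambda_{\min}$. The plan is to bound both extreme eigenvalues through the Rayleigh quotient $\bm V^\top\mathbf K_{\text{Total}}\bm V/|\bm V|^2 = A_h(\bm v_h,\bm v_h)/|\bm V|^2$. We chain the continuity/coercivity constants with the three discrete inequalities of Lemma~\ref{lem:discrete}, as in \cite{BURMAN20102680}.

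\emph{Upper bound.} By the continuity in Proposition~\ref{prop:coercivity}, followed by Lemma~\ref{lem:discrete}(i) and then the upper half of (iii), we have
\begin{equation*}
  A_h(\bm v_h,\bm v_h)\leq C_s|||\bm v_h|||^2\leq C_sC_{\mathrm{inv}}h^{-2}\|\bm v_h\|^2_{\Omega_T}\leq C_sC_{\mathrm{inv}}C_M h^{d-2}|\bm V|^2 .
\end{equation*}
Hence $\lambda_{\max}\leq C h^{d-2}$.

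\emph{Lower bound.} Coercivity gives $A_h(\bm v_h,\bm v_h)\geq c_s|||\bm v_h|||^2$. By Lemma~\ref{lem:discrete}(ii), $|||\bm v_h|||^2\geq C_P^{-2}\|\bm v_h\|^2_{\Omega_T}$, and the lower half of (iii) then yields
\begin{equation*}
  A_h(\bm v_h,\bm v_h)\geq c_sC_P^{-2}c_M h^d|\bm V|^2 .
\end{equation*}
Thus $\lambda_{\min}\geq c h^d$. Dividing the two bounds gives $\operatorname{cond}\leq (C_sC_{\mathrm{inv}}C_M)/(c_sC_P^{-2}c_M)\,h^{-2}$.

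It remains to track the constants. All of them ($c_s,C_s$ from Proposition~\ref{prop:coercivity}; $C_{\mathrm{inv}},C_P,c_M,C_M$ from Lemma~\ref{lem:discrete}) are cut-independent. $C_s$ depends on $\gamma_D\eta_{\max}$ and $C_L$, which are fixed data at the iterate. The only delicate point is that (iii) and (i) are posed on the full active mesh $\Omega_T$ rather than on $\Omega$. For this reason the lower bound must go through (ii), which relies on the ghost-penalty extension \eqref{eq:extension}. Without (ii), small cuts would make $\lambda_{\min}$ degenerate, so this is the step where cut-independence is really earned; since it is already supplied by the lemma, the corollary follows directly.
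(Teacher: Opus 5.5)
Your proposal is correct and matches the paper's proof step for step. You bound $\lambda_{\max}\le C_sC_{\mathrm{inv}}C_M h^{d-2}$ via continuity, Lemma~\ref{lem:discrete}(i) and the upper half of (iii), and $\lambda_{\min}\ge c_sC_P^{-2}c_M h^{d}$ via coercivity, (ii) and the lower half of (iii), so the ratio gives $Ch^{-2}$ with cut-independent constants.
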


\begin{proof}
    By Proposition~\ref{prop:coercivity} the matrix $\mathbf{K}_{\text{Total}}$ is symmetric positive definite, so its condition number is the ratio of the extreme eigenvalues, which we estimate through the Rayleigh quotient. Let $\bm{v}_h \in \mathbb{V}_h \setminus \{\bm{0}\}$ with coefficient vector $\bm{V}$, and note $\bm{V}^T \mathbf{K}_{\text{Total}} \bm{V} = A_h(\bm{v}_h, \bm{v}_h)$.

    For the largest eigenvalue, continuity, the global inverse estimate \eqref{eq:lemglobalinverse} and the norm
    equivalence \eqref{eq:lemmassequiv} give
    \begin{equation*}
        \bm{V}^T \mathbf{K}_{\text{Total}} \bm{V} \leq C_s \, |||\bm{v}_h|||^2
        \leq C_s C_{\mathrm{inv}} \, h^{-2} \|\bm{v}_h\|^2_{\Omega_T}
        \leq C_s C_{\mathrm{inv}} C_M \, h^{d-2} \, |\bm{V}|^2 ,
    \end{equation*}
    hence $\lambda_{\max}(\mathbf{K}_{\text{Total}}) \leq C_s C_{\mathrm{inv}} C_M \, h^{d-2}$.

    For the smallest eigenvalue, coercivity, the Poincar\'e--Friedrichs inequality \eqref{eq:lempoincare} and
    \eqref{eq:lemmassequiv} give
    \begin{equation*}
        \bm{V}^T \mathbf{K}_{\text{Total}} \bm{V} \geq c_s \, |||\bm{v}_h|||^2
        \geq c_s C_P^{-2} \, \|\bm{v}_h\|^2_{\Omega_T}
        \geq c_s C_P^{-2} c_M \, h^{d} \, |\bm{V}|^2 ,
    \end{equation*}
    hence $\lambda_{\min}(\mathbf{K}_{\text{Total}}) \geq c_s C_P^{-2} c_M \, h^{d}$. Taking the ratio,
    \begin{equation*}
        \operatorname{cond}(\mathbf{K}_{\text{Total}}) = \frac{\lambda_{\max}(\mathbf{K}_{\text{Total}})}{\lambda_{\min}(\mathbf{K}_{\text{Total}})}
        \leq \frac{C_s C_{\mathrm{inv}} C_M}{c_s C_P^{-2} c_M} \, h^{-2} ,
    \end{equation*}
    where every constant is independent of $h$ and, by Proposition~\ref{prop:coercivity} and Lemma~\ref{lem:discrete}, of the position of $\partial\Omega$ relative to the mesh.
\end{proof}

The two corollaries together capture the practical content of the stabilisation: without the ghost penalty, both the
stability constant in Corollary~\ref{cor:wellposed} and the condition number in Corollary~\ref{cor:conditioning}
degenerate as the cut volume $|T \cap \Omega| \to 0$ (cf. Section~\ref{sec:ghostpenalty}).

The energy point of view of the previous sections also yields a statement about the outer, nonlinear iteration: each
correction of the damped quasi-Newton scheme is a descent direction for the current iterate-frozen energy.

\begin{lemma}[Descent direction for the damped quasi-Newton scheme]
    \label{lem:descent}
    Let Assumption~\ref{ass:ellipticity} hold at the iterate $\bar{\bm{u}}$, with $\gamma_D \geq \gamma_0$ as in Proposition~\ref{prop:coercivity}, and let $\Delta\bm{u} \neq \bm{0}$ solve the quasi-Newton step of Corollary~\ref{cor:wellposed}. Then $\Delta\bm{u}$ is a descent direction for $\mathcal{E}_{\text{Total}}$:
    \begin{equation}
        D_{\Delta\bm{u}} \mathcal{E}_{\text{Total}}(\bar{\bm{u}}) = -A_h(\Delta\bm{u}, \Delta\bm{u}) \leq -c_s |||\Delta\bm{u}|||^2 < 0 ,
    \end{equation}
    and consequently $\mathcal{E}_{\text{Total}}(\bar{\bm{u}} + \beta\Delta\bm{u}) < \mathcal{E}_{\text{Total}}(\bar{\bm{u}})$ for all sufficiently small $\beta > 0$.
\end{lemma}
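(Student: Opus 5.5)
The plan rests on three identities: the directional derivative of the iterate-frozen energy $\mathcal{E}_{\text{Total}}(\cdot;\bar{\bm u})$ at $\bar{\bm u}$ equals the assembled residual $\mathcal F_{\text{Total}}(\bar{\bm u},\cdot)$; the quasi-Newton equation then turns this into $-A_h(\Delta\bm u,\Delta\bm u)$; and Proposition~\ref{prop:coercivity} gives the strict negativity.

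\textbf{Step 1: the residual is the first variation.} I would show that $D_{\bm v}\mathcal{E}_{\text{Total}}(\bar{\bm u};\bar{\bm u}) = \mathcal F_{\text{Total}}(\bar{\bm u},\bm v)$ for every $\bm v\in\mathbb V_h$, with $\eta(\bar{\bm u})$ held fixed, and verify it term by term.
\begin{itemize}
\item The bulk part gives \eqref{nonlinres} by definition of $\bm P=\partial\Psi/\partial\bm F$.
\item The stabilisation energy \eqref{eq:gpenergy} is a symmetric quadratic form, so its variation is $\gamma_A\sum_F g_F(\bar{\bm u},\bm v)$, which is \eqref{eq:fstab}.
\item For the Nitsche part, the product rule applied to $-\int_{\partial\Omega}\bm n\cdot\bm P(\bm u)\cdot\bm u\,dS$ gives $-\int \bm n\cdot\bm P(\bar{\bm u})\cdot\bm v - \int\bm n\cdot(\mathbb L(\bar{\bm u}):\Grad\bm v)\cdot\bar{\bm u}$, using $D_{\bm v}\bm P=\mathbb L:\Grad\bm v$.
\item The frozen penalty term gives $\int \gamma_D\eta(\bar{\bm u})h^{-1}\bar{\bm u}\cdot\bm v$.
\end{itemize}
Together these reproduce \eqref{eq:fbound}. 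Two points need care here. Freezing $\eta$ is exactly what makes this an identity, because $\eta$ is not differentiated. The omitted third-derivative term from the Remark (symmetrised tangent) affects only the second variation, not the first, so it plays no role at this stage.

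\textbf{Step 2: insert the quasi-Newton equation.} Taking $\bm v=\Delta\bm u$ in the step of Corollary~\ref{cor:wellposed} gives $\mathcal F_{\text{Total}}(\bar{\bm u},\Delta\bm u)=-\mathcal K_{\text{Total}}(\bar{\bm u};\Delta\bm u,\Delta\bm u)=-A_h(\Delta\bm u,\Delta\bm u)$. Proposition~\ref{prop:coercivity} then bounds this by $\le -c_s|||\Delta\bm u|||^2$. The bound is strictly negative because $|||\cdot|||$ is a norm on $\mathbb V_h$: by Lemma~\ref{lem:discrete}(ii), $|||\bm v_h|||=0$ forces $\bm v_h=0$ on $\Omega_T$. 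Note that only the symmetric approximation $A_h$ is used here, never the true Hessian, which is why the defect is irrelevant to descent.

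\textbf{Step 3: decrease for small $\beta$.} Set $\phi(\beta):=\mathcal{E}_{\text{Total}}(\bar{\bm u}+\beta\Delta\bm u;\bar{\bm u})$. Since $\Psi\in C^3$ near the attained deformation gradients by Assumption~\ref{ass:ellipticity}(b), and since $\bm u_h$ lives in a finite-dimensional space, $\phi$ is $C^1$ near $0$ with $\phi'(0)<0$. Hence $\phi(\beta)<\phi(0)$ for all sufficiently small $\beta>0$.

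\textbf{Main obstacle.} I expect the difficulty to lie in justifying differentiability in Step 3. The deformation gradients along $\bar{\bm u}+\beta\Delta\bm u$ must stay in the neighbourhood where $\Psi$ is $C^1$ and $J>0$. I would handle this with an inverse estimate: on the finite-dimensional space, $\Grad\Delta\bm u$ is bounded pointwise on $\Omega_T$, so for small $\beta$ the gradients remain within the $C^3$ neighbourhood from Assumption~\ref{ass:ellipticity}(b). This guarantees continuity of $\phi'$ at $0$.
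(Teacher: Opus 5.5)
Your proposal is correct and follows the paper's proof. Both identify $\mathcal F_{\text{Total}}(\bar{\bm u},\cdot)$ as the exact first variation of the energy with $\eta$ frozen, insert the quasi-Newton equation to obtain $-A_h(\Delta\bm u,\Delta\bm u)$, bound it by Proposition~\ref{prop:coercivity}, and conclude decrease for small $\beta$ from differentiability on the finite-dimensional space $\mathbb V_h$. Your term-by-term check of the first variation, the strictness argument via Lemma~\ref{lem:discrete}(ii), and the argument keeping the deformation gradients in the $C^3$ range are details the paper leaves implicit.
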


\begin{proof}
    Since $\mathcal{F}_{\text{Total}}(\bar{\bm{u}}, \cdot)$ is the exact first variation of $\mathcal{E}_{\text{Total}}$ with the penalty weight $\eta$ frozen at $\bar{\bm{u}}$ (Section~\ref{bcconditions}), the quasi-Newton step gives
    $D_{\Delta\bm{u}} \mathcal{E}_{\text{Total}}(\bar{\bm{u}}) = \mathcal{F}_{\text{Total}}(\bar{\bm{u}}, \Delta\bm{u}) = -\mathcal{K}_{\text{Total}}(\bar{\bm{u}}; \Delta\bm{u}, \Delta\bm{u})$, and the bound follows from Proposition~\ref{prop:coercivity}. The decrease for small $\beta$ is then immediate from the differentiability of $\mathcal{E}_{\text{Total}}$ on the finite-dimensional space $\mathbb{V}_h$.
\end{proof}

We emphasise that the lemma holds even though $\mathcal{K}_{\text{Total}}$ is only an approximation of the exact second
variation (cf. the symmetrised-tangent remark in Section~\ref{bcconditions}): descent only requires the residual to be
the exact first variation and the tangent to be positive definite. Because the surrogate functional changes between
iterations through the frozen weight $\eta$, this is a \emph{local} descent statement: each quasi-Newton correction is
a descent direction for the energy with the penalty weight frozen at the current iterate, and the damping parameter
$\beta$ of \eqref{eq:newtonupdate}, combined with a standard line-search criterion, guarantees decrease of that
iterate-frozen energy at every step. The lemma does not by itself establish monotone decrease of one fixed global
energy along the whole iteration, since successive steps refer to functionals differing by a contribution proportional
to $\|\bar{\bm{u}}\|^2_{\partial\Omega}$; these differences vanish as the weight settles at convergence.

For \emph{linear} elasticity, the symmetric Nitsche CutFEM with ghost-penalty stabilisation satisfies the optimal a
priori estimate
\begin{equation}
    \label{eq:linearestimate}
    \| \bm{u} - \bm{u}_h \|_{L^2(\Omega)} \leq C h^{p+1} | \bm{u} |_{H^{p+1}(\Omega)},
\end{equation}
see \cite{Hansbo_2017, Sticko2020}. For the nonlinear problem, the abstract approximation theory for branches of
nonsingular solutions due to Brezzi, Rappaz and Raviart \cite{brezzi1980finite, caloz1997numerical} yields the
following result for regular, corner-free branches; it excludes the corner-singular junctions of
Section~\ref{sec:mixedjunction}.

We first establish the part of the boundary Lipschitz hypothesis~(J1) that excludes $D\eta$. Although the maximum and
componentwise $1$-norm in \eqref{eq:penaltyweight} need not be differentiable, their values are Lipschitz, which
controls differences between the state-evaluated, frozen-weight penalty tangents. Thus only the terms containing
$D\eta$ remain a genuine hypothesis in (J1).

\begin{lemma}[Boundary-tangent Lipschitz estimate excluding $D\eta$]
    \label{lem:boundarylip}
    Let $\bm u \in (H^{p+1}(\Omega_T))^d \cap (W^{1,\infty}(\Omega_T))^d$ have zero trace on $\partial\Omega$, let
    $p > d/2$, and let Assumption~\ref{ass:ellipticity} hold together with $\Psi \in C^{3,1}$ on its non-degenerate
    range, $|\partial^3\Psi(\bm F_1) - \partial^3\Psi(\bm F_2)| \leq C_{L''} |\bm F_1 - \bm F_2|$. Assume the standard
    cut interpolation trace estimate $\|h^{-1/2}(\bm u - I_h \bm u)\|_{\partial\Omega} \leq C h^p$
    \cite{Hansbo_2017, Sticko2020}. Let $DF_h^{\partial}$ denote the boundary tangent obtained with the adaptive weight
    held fixed while differentiating at each state (but evaluated at that state), i.e. the derivative of
    $\mathcal{F}_{\text{Bound}}$ in \eqref{eq:fbound} excluding the terms generated by $D\eta$. Then, in the discrete
    neighbourhood
    $\mathcal B_h = \{\bm v_h \in \mathbb V_h : ||| I_h\bm u - \bm v_h ||| \leq \delta h^{d/2}\}$ of the interpolant,
    with $\delta$ sufficiently small,
    there exist $C > 0$ and $h_0 > 0$, independent of $h$ and of the cut, such that for all $h \leq h_0$ and
    $\bm w, \bm z \in \mathcal B_h$,
    \begin{equation}
        \label{eq:boundarylip}
        \big\| DF_h^{\partial}(\bm w) - DF_h^{\partial}(\bm z) \big\|_{\mathcal L(\mathbb V_h, \mathbb V_h')}
        \;\leq\; C \, h^{-d/2} \, ||| \bm w - \bm z ||| ,
    \end{equation}
    the operator norm being induced by $|||\cdot|||$ and its dual norm.
\end{lemma}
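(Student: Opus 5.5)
We first write out $DF_h^{\partial}(\bm w)$ explicitly. Differentiating \eqref{eq:fbound} at a state $\bm w$ in direction $\bm\phi$, with $\eta$ held constant, and testing with $\bm\psi$ gives
\begin{align*}
DF_h^{\partial}(\bm w)[\bm\phi,\bm\psi]
={}& -\int_{\partial\Omega}\bm n\cdot(\mathbb L(\bm w):\Grad\bm\phi)\cdot\bm\psi\,dS
-\int_{\partial\Omega}\bm n\cdot(\mathbb L(\bm w):\Grad\bm\psi)\cdot\bm\phi\,dS\\
&-\int_{\partial\Omega}\bm n\cdot\big((\partial^3\Psi(\bm F(\bm w))[\Grad\bm\phi]):\Grad\bm\psi\big)\cdot\bm w\,dS
+\int_{\partial\Omega}\frac{\gamma_D\eta(\bm w)}{h}\bm\phi\cdot\bm\psi\,dS .
\end{align*}
The difference $DF_h^{\partial}(\bm w)-DF_h^{\partial}(\bm z)$ then splits into four groups:
\begin{itemize}
\item[(A)] the two Nitsche terms with $\mathbb L(\bm w)-\mathbb L(\bm z)$;
\item[(B)] the third-derivative term, split by adding and subtracting as $\partial^3\Psi(\bm F(\bm w))[\cdot]\,(\bm w-\bm z)$ plus $(\partial^3\Psi(\bm F(\bm w))-\partial^3\Psi(\bm F(\bm z)))[\cdot]\,\bm z$;
\item[(C)] the penalty term with $\eta(\bm w)-\eta(\bm z)$.
\end{itemize}
Each group is bounded by $C h^{-d/2}|||\bm w-\bm z|||\,|||\bm\phi|||\,|||\bm\psi|||$.

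The key preliminary step is a uniform $L^\infty$ control on $\mathcal B_h$. Since $\bm u\in W^{1,\infty}$ and $p>d/2$, the interpolant $I_h\bm u$ has $\|\Grad I_h\bm u\|_{L^\infty(\Omega_T)}\le C$. We then use an elementwise inverse estimate $\|\bm v_h\|_{L^\infty(T)}\le Ch^{-d/2}\|\bm v_h\|_T$, together with the extension property \eqref{eq:extension} and Lemma~\ref{lem:discrete}(ii), to convert triple-norm quantities into pointwise ones:
\[
\|\Grad(\bm w-\bm z)\|_{L^\infty(\Omega_T)}\le Ch^{-d/2}|||\bm w-\bm z||| .
\]
Applied with $\bm z=I_h\bm u$ and $\delta$ small, this shows that $\bm F(\bm w)$ stays in a compact neighbourhood of the range of $\bm F(\bm u)$ where $J\ge J_0/2$. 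There $\partial^2\Psi$ and $\partial^3\Psi$ are bounded, and $\partial^3\Psi$ is Lipschitz by the $C^{3,1}$ hypothesis. Consequently
\[
\|\mathbb L(\bm w)-\mathbb L(\bm z)\|_{L^\infty}\le C_{L'}\,C h^{-d/2}|||\bm w-\bm z||| .
\]
The same reasoning controls $\eta$: the max and the componentwise $1$-norm are Lipschitz functions of $\mathbb L$, so
\[
\|\eta(\bm w)-\eta(\bm z)\|_{L^\infty(\partial\Omega)}\le Ch^{-d/2}|||\bm w-\bm z||| .
\]

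With these in hand the four groups are estimated in turn.
\begin{itemize}
\item[(A)] Pull the $L^\infty$ factor out, then apply Cauchy--Schwarz, the cut trace inequality \eqref{eq:cuttrace} applied to $\Grad\bm\phi$, and the extension property, exactly as in the continuity part of Proposition~\ref{prop:coercivity}.
\item[(C)] Bound by $\gamma_D\|\eta(\bm w)-\eta(\bm z)\|_\infty\|h^{-1/2}\bm\phi\|_{\partial\Omega}\|h^{-1/2}\bm\psi\|_{\partial\Omega}$.
\item[(B)] Both factors $\Grad\bm\phi$ and $\Grad\bm\psi$ appear on the boundary. Use the trace inequality on both, which yields $h^{-1}\|\Grad\bm\phi\|_{\Omega_T}\|\Grad\bm\psi\|_{\Omega_T}$ times $\|\cdot\|_{L^\infty(\partial\Omega)}$ of the remaining undifferentiated factor. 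Two cases arise.
\begin{itemize}
\item[(B1)] For the factor $\bm w-\bm z$, use $\|\bm w-\bm z\|_{L^\infty(\partial\Omega)}\le Ch^{-d/2}\,h^{1/2}\cdot h^{1/2}\|h^{-1/2}(\bm w-\bm z)\|_{\partial\Omega}$ via the inverse trace and $L^\infty$ estimates on cut cells. The powers of $h$ should cancel the $h^{-1}$ up to $h^{-d/2}$; if they do not, we fall back to the cruder bound $\|\bm w-\bm z\|_\infty\le Ch^{-d/2}|||\bm w-\bm z|||$ combined with $h\|\cdot\|$ bookkeeping.
\item[(B2)] For the factor $\bm z$ we exploit the zero trace of $\bm u$: $\|\bm z\|_{L^\infty(\partial\Omega)}\le\|I_h\bm u-\bm u\|_{L^\infty(\partial\Omega)}+\|\bm z-I_h\bm u\|_{L^\infty(\partial\Omega)}$. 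The first term is $O(h^{p+1-d/2})$ or better by the assumed interpolation trace estimate plus an inverse estimate. The second is $O(h^{1/2}\delta)$ in the boundary-weighted sense. Both are small enough to absorb the $h^{-1}$ from the double trace, since $p>d/2$.
\end{itemize}
\end{itemize}

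The main obstacle is the $h$-power bookkeeping in group (B), the third-derivative term carrying two boundary gradients. The $h^{-1}$ from applying the trace inequality twice must be compensated exactly by the smallness of the undifferentiated boundary factor. This works because $\bm u$ vanishes on $\partial\Omega$, so that factor is small on $\mathcal B_h$. The hypotheses $p>d/2$, the trace interpolation estimate, and the radius $\delta h^{d/2}$ of $\mathcal B_h$ are presumably chosen exactly so that these powers balance. All constants come from \eqref{eq:cuttrace}, \eqref{eq:extension} and Lemma~\ref{lem:discrete}, and hence are cut-independent.
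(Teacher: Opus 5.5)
\textbf{Verdict: there is a gap in group (B).} Your groups (A) and (C) coincide with the paper's $T_1,T_1'$ and $T_{\eta,0}$, and your split of (B) mirrors its $T_3,T_2$. Your $L^\infty$ inverse bound on gradients and the Lipschitz bound on $\eta$ are exactly the paper's ingredient (1). The problem is how you bound the third-derivative group.

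\textbf{Where (B) fails.} You take the $L^2(\partial\Omega)$ trace of \emph{both} $\Grad\bm\phi$ and $\Grad\bm\psi$, which costs a factor $h^{-1}$. To recover $h^{-d/2}$ you then need $\|\bm w-\bm z\|_{L^\infty(\partial\Omega)}\le Ch^{1-d/2}|||\bm w-\bm z|||$ in (B1) and $\|\bm z\|_{L^\infty(\partial\Omega)}\le Ch$ in (B2). Neither is established by the tools you cite.
\begin{itemize}
\item Your (B1) formula amounts to the surface inverse inequality $\|\bm\chi_h\|_{L^\infty(\partial\Omega)}\lesssim h^{-(d-1)/2}\|\bm\chi_h\|_{\partial\Omega}$, and this is not cut-independent. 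Take $d=2$ and let $\partial\Omega$ clip a corner of a cell over a length $\epsilon\ll h$. The $\mathcal Q_1$ basis function at the opposite vertex then has $L^\infty/L^2$ ratio on $\partial\Omega$ of order $\epsilon^{-1/2}$.
\item The cut trace inequality \eqref{eq:cuttrace} bounds surface norms by volume norms, i.e.\ the opposite direction, so it cannot supply this.
\item Your fallback $\|\bm w-\bm z\|_{L^\infty}\le Ch^{-d/2}|||\bm w-\bm z|||$ yields $h^{-1-d/2}$, a full power of $h$ too weak.
\item In (B2), the $O(h^{1/2}\delta)$ you state for $\bm z-I_h\bm u$ would leave $\delta h^{-1/2-d/2}$. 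You would need $O(\delta h)$ pointwise on $\partial\Omega$, which rests on the same missing estimate.
\end{itemize}

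\textbf{The fix (the paper's route).} Distribute the factors so that no pointwise bound on an undifferentiated quantity on $\partial\Omega$ is ever needed:
\begin{itemize}
\item put the full-cell inverse bound on one gradient, $\|\Grad\bm\phi\|_{L^\infty(\Omega_T)}\le Ch^{-d/2}|||\bm\phi|||$, which is cut-independent;
\item put the weighted trace on the other, $\|h^{1/2}\Grad\bm\psi\|_{\partial\Omega}\le C|||\bm\psi|||$, via \eqref{eq:cuttrace} and \eqref{eq:extension};
\item keep the undifferentiated factor in the $h^{-1/2}$-weighted boundary $L^2$ norm.
\end{itemize}
For (B1) that factor is $\|h^{-1/2}(\bm w-\bm z)\|_{\partial\Omega}\le|||\bm w-\bm z|||$ by the definition of the triple norm, which gives $Ch^{-d/2}$ immediately. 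For (B2) use trace smallness:
$\|h^{-1/2}\bm z\|_{\partial\Omega}\le\|h^{-1/2}(\bm u-I_h\bm u)\|_{\partial\Omega}+|||\bm z-I_h\bm u|||\le C(h^p+\delta h^{d/2})\le Ch^{d/2}$.
This step uses the zero trace of $\bm u$, the assumed interpolation trace estimate, the ball radius $\delta h^{d/2}$, and $p>d/2$. It cancels the extra factor from $\|\partial^3\Psi(\bm F(\bm w))-\partial^3\Psi(\bm F(\bm z))\|_{L^\infty}\le Ch^{-d/2}|||\bm w-\bm z|||$, giving $h^{-d/2}\cdot h^{-d/2}\cdot h^{d/2}=h^{-d/2}$. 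This is where the hypotheses you suspected were "chosen exactly so that these powers balance" actually enter.

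Alternatively, your route could be rescued by proving $\|\bm\chi_h\|_{L^\infty(T)}\le Ch^{1-d/2}|||\bm\chi_h|||$ on cut cells, via a thin-strip Poincar\'e inequality combined with \eqref{eq:extension}. That is an extra, cut-sensitive lemma that the redistribution avoids.
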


\begin{proof}
    By \eqref{eq:kbound}, freezing $\eta$ while differentiating at a given state gives the penalty tangent
    $\int_{\partial\Omega}(\gamma_D/h)\eta(\bm w)\,\bm{\delta u}\cdot\bm v\,dS$. Since the frozen value is nevertheless
    evaluated at the state, its difference between $\bm w$ and $\bm z$ is
    \begin{equation*}
        T_{\eta,0}=\int_{\partial\Omega}\frac{\gamma_D}{h}
        \big(\eta(\bm w)-\eta(\bm z)\big)\,\bm{\delta u}\cdot\bm v\,dS .
    \end{equation*}
    The remaining difference consists of the two Nitsche terms with coefficient
    $\mathbb L(\bm w) - \mathbb L(\bm z)$,
    \begin{equation*}
        T_1 = -\int_{\partial\Omega} \bm n \cdot \big( (\mathbb L(\bm w) - \mathbb L(\bm z)) : \Grad \bm{\delta u} \big) \cdot \bm v \, dS
        \qquad (\text{and its transpose } T_1' \text{ with } \bm{\delta u}, \bm v \text{ exchanged}),
    \end{equation*}
    and the two generated by the third-derivative term,
    \begin{equation*}
        T_2 = -\int_{\partial\Omega} \bm n \cdot \big( (D\mathbb L(\bm w) - D\mathbb L(\bm z))[\Grad\bm{\delta u}] : \Grad \bm v \big) \cdot \bm w \, dS,
        \qquad
        T_3 = -\int_{\partial\Omega} \bm n \cdot \big( D\mathbb L(\bm z)[\Grad\bm{\delta u}] : \Grad \bm v \big) \cdot (\bm w - \bm z) \, dS .
    \end{equation*}
    Three ingredients are used, each with a cut-independent constant. \emph{(1) $L^\infty$ inverse bounds:} as in the
    bulk argument, $\|\Grad\bm\chi_h\|_{L^\infty(\Omega_T)} \leq C_{\mathrm{inv}} h^{-d/2} |||\bm\chi_h|||$ for
    $\bm\chi_h \in \mathbb V_h$ (elementwise inverse inequality plus \eqref{eq:extension}), whence
    $\|\mathbb L(\bm w) - \mathbb L(\bm z)\|_{L^\infty} \leq C_{L'} C_{\mathrm{inv}} h^{-d/2} |||\bm w - \bm z|||$ and,
    by the $C^{3,1}$ assumption,
    $\|D\mathbb L(\bm w) - D\mathbb L(\bm z)\|_{L^\infty} \leq C_{L''} C_{\mathrm{inv}} h^{-d/2} |||\bm w - \bm z|||$;
    for $\delta$ small the deformation gradients of $\bm w, \bm z$ stay in the non-degenerate range, since
    $\|\Grad(\bm w - I_h\bm u)\|_{L^\infty} \leq C_{\mathrm{inv}}\delta$ and $\|\Grad I_h\bm u\|_{L^\infty} \leq C
        \|\bm u\|_{W^{1,\infty}(\Omega_T)}$. \emph{(2) Trace control of discrete functions:} the cut trace inequality
    \eqref{eq:cuttrace} with \eqref{eq:extension} gives $\|h^{1/2}\Grad\bm\chi_h\|_{\partial\Omega} \leq C
        |||\bm\chi_h|||$, while $\|h^{-1/2}\bm\chi_h\|_{\partial\Omega} \leq |||\bm\chi_h|||$ holds by definition of the
    triple norm. \emph{(3) Trace smallness of the iterates:} since $\bm u$ has zero trace,
    \begin{equation*}
        \|h^{-1/2}\bm w\|_{\partial\Omega}
        \leq \|h^{-1/2}(\bm u - I_h\bm u)\|_{\partial\Omega} + ||| \bm w - I_h\bm u |||
        \leq C ( h^p + \delta h^{d/2} ) \leq C h^{d/2}
        \qquad (p > d/2, \; h \leq h_0).
    \end{equation*}
    The five contributions are bounded by distributing these factors. First, the elementary inequalities
    \begin{equation*}
        |\max\{a,s\}-\max\{a,t\}|\leq |s-t|,
        \qquad
        |\|\mathbb A\|_1-\|\mathbb B\|_1|\leq\|\mathbb A-\mathbb B\|_1,
    \end{equation*}
    together with equivalence of finite-dimensional tensor norms and ingredient~(1), give
    \begin{equation*}
        \|\eta(\bm w)-\eta(\bm z)\|_{L^\infty(\partial\Omega)}
        \leq C\|\mathbb L(\bm w)-\mathbb L(\bm z)\|_{L^\infty(\partial\Omega)}
        \leq C h^{-d/2}|||\bm w-\bm z|||.
    \end{equation*}
    Consequently,
    \begin{equation*}
        |T_{\eta,0}|
        \leq \gamma_D\|\eta(\bm w)-\eta(\bm z)\|_{L^\infty}
        \|h^{-1/2}\bm{\delta u}\|_{\partial\Omega}\|h^{-1/2}\bm v\|_{\partial\Omega}
        \leq C h^{-d/2}|||\bm w-\bm z|||\,|||\bm{\delta u}|||\,|||\bm v|||.
    \end{equation*}
    For the Nitsche coefficient terms,
    \begin{equation*}
        |T_1| \leq \|\mathbb L(\bm w) - \mathbb L(\bm z)\|_{L^\infty} \, \|h^{1/2}\Grad\bm{\delta u}\|_{\partial\Omega} \, \|h^{-1/2}\bm v\|_{\partial\Omega}
        \leq C h^{-d/2} \, |||\bm w - \bm z||| \, |||\bm{\delta u}||| \, |||\bm v||| ,
    \end{equation*}
    and the same bound for $T_1'$. For $T_3$, place the $L^\infty$ bound on $\Grad\bm{\delta u}$ and note that
    $|D\mathbb L(\bm z)| \leq C_{L'}$ pointwise by Assumption~\ref{ass:ellipticity}(b):
    \begin{equation*}
        |T_3| \leq C_{L'} \, \|\Grad\bm{\delta u}\|_{L^\infty} \, \|h^{1/2}\Grad\bm v\|_{\partial\Omega} \, \|h^{-1/2}(\bm w - \bm z)\|_{\partial\Omega}
        \leq C h^{-d/2} \, |||\bm{\delta u}||| \, |||\bm v||| \, |||\bm w - \bm z||| .
    \end{equation*}
    For $T_2$, the same distribution, with ingredient (3) absorbing the second inverse factor:
    \begin{equation*}
        |T_2| \leq \|D\mathbb L(\bm w) - D\mathbb L(\bm z)\|_{L^\infty} \, \|\Grad\bm{\delta u}\|_{L^\infty} \, \|h^{1/2}\Grad\bm v\|_{\partial\Omega} \, \|h^{-1/2}\bm w\|_{\partial\Omega}
        \leq C h^{-d/2} \cdot h^{-d/2} \cdot h^{d/2} \, |||\bm w - \bm z||| \, |||\bm{\delta u}||| \, |||\bm v||| ,
    \end{equation*}
    which is again $C h^{-d/2} |||\bm w - \bm z||| \, |||\bm{\delta u}||| \, |||\bm v|||$. Summing proves
    \eqref{eq:boundarylip}; every constant arises from \eqref{eq:cuttrace}, \eqref{eq:extension} and elementwise
    inverse inequalities on full cells, hence is independent of the position of $\partial\Omega$ relative to the mesh.
\end{proof}

The same three ingredients bound the Jacobian defect $DF_h-\widetilde{DF}_h$ on $\mathcal B_h$: its omitted terms
contain the undifferentiated boundary trace of the state, whose smallness absorbs the inverse factor.

\begin{lemma}[Jacobian-defect bound in the Newton neighbourhood]
    \label{lem:defect}
    Adopt the assumptions of Lemma~\ref{lem:boundarylip}, with $\Psi \in C^3$, i.e. the bound $|\partial^3\Psi| \leq
        C_{L'}$ of Assumption~\ref{ass:ellipticity}(b), sufficing in place of $C^{3,1}$. Let $F_h$ denote the algorithmic
    residual and $\widetilde{DF}_h$ the assembled symmetric quasi-Newton tangent of Section~\ref{bcconditions} (the
    notation of Theorem~\ref{thm:brr} below). The defect $DF_h - \widetilde{DF}_h$ consists of the omitted
    third-derivative boundary term
    \begin{equation*}
        T^{(3)}(\bm w)[\bm{\delta u}, \bm v] = -\int_{\partial\Omega} \bm n \cdot \big( D\mathbb L(\bm w)[\Grad\bm{\delta u}] : \Grad\bm v \big) \cdot \bm w \, dS
    \end{equation*}
    and the weight term
    $T^{(\eta)}(\bm w)[\bm{\delta u}, \bm v] = \int_{\partial\Omega} \tfrac{\gamma_D}{h} \big(D\eta(\bm w)[\bm{\delta u}]\big) \, \bm w \cdot \bm v \, dS$,
    the latter defined wherever the active branch of the maximum and the signs in the $1$-norm defining $\eta$ are
    locally constant. There exists $C > 0$, independent of $h$ and of the cut, such that for all $\bm w \in \mathcal B_h$
    and $h \leq h_0$,
    \begin{equation}
        \label{eq:defectbound}
        \big\| T^{(3)}(\bm w) \big\|_{\mathcal L(\mathbb V_h, \mathbb V_h')} \;\leq\; C \big( h^{\,p - d/2} + \delta \big) ,
    \end{equation}
    and, where defined, the same bound for $T^{(\eta)}$. In particular, for every $\vartheta \in (0,1)$ there exist
    $\delta_0 > 0$ and $h_0 > 0$ such that the defect is bounded by $\vartheta c_s$ for all $\delta \leq \delta_0$ and
    $h \leq h_0$.
\end{lemma}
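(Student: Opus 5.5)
The plan reuses the three ingredients of the proof of Lemma~\ref{lem:boundarylip}: the $L^\infty$ inverse bound, the trace control of discrete functions, and the trace smallness of the iterates. The difference is that only one inverse factor now appears. It is compensated by the undifferentiated boundary trace $\bm w$, which is small because $\bm u$ has zero trace. No Lipschitz bound on $\partial^3\Psi$ is needed, since no difference of third derivatives occurs; the pointwise bound $|D\mathbb L(\bm w)|\le C_{L'}$ of Assumption~\ref{ass:ellipticity}(b) suffices. This is legitimate once we recall from ingredient (1) that, for $\delta$ small, the deformation gradients of $\bm w\in\mathcal B_h$ remain in the non-degenerate range.

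For $T^{(3)}$, Hölder's inequality on $\partial\Omega$ gives
\begin{equation*}
|T^{(3)}(\bm w)[\bm{\delta u},\bm v]|\le C_{L'}\,\|\Grad\bm{\delta u}\|_{L^\infty(\Omega_T)}\,\|h^{1/2}\Grad\bm v\|_{\partial\Omega}\,\|h^{-1/2}\bm w\|_{\partial\Omega}.
\end{equation*}
Ingredient (1) bounds the first factor by $C h^{-d/2}|||\bm{\delta u}|||$, and ingredient (2) bounds the second by $C|||\bm v|||$. For the last factor I will not use the crude bound $Ch^{d/2}$ from Lemma~\ref{lem:boundarylip}. Instead I keep the finer form
\begin{equation*}
\|h^{-1/2}\bm w\|_{\partial\Omega}\le \|h^{-1/2}(\bm u-I_h\bm u)\|_{\partial\Omega}+|||\bm w-I_h\bm u|||\le C h^{p}+\delta h^{d/2}.
\end{equation*}
Multiplying the three factors yields $C(h^{p-d/2}+\delta)\,|||\bm{\delta u}|||\,|||\bm v|||$, and taking the supremum gives \eqref{eq:defectbound}. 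All constants come from \eqref{eq:cuttrace}, \eqref{eq:extension} and elementwise inverse inequalities on full cells, so they are cut-independent.

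For $T^{(\eta)}$, at a point where the active branch of the maximum and the signs in $\|\cdot\|_1$ are locally constant, $\eta$ is either the constant $2\mu_e$ or a fixed signed linear combination $\tfrac12\sum\pm L_{ijkl}$. In both cases $|D\eta(\bm w)[\bm{\delta u}]|\le C\,|D\mathbb L(\bm w)[\Grad\bm{\delta u}]|\le C C_{L'}|\Grad\bm{\delta u}|$ pointwise. Then
\begin{equation*}
|T^{(\eta)}|\le \gamma_D C\,\|\Grad\bm{\delta u}\|_{L^\infty}\,\|h^{-1/2}\bm w\|_{\partial\Omega}\,\|h^{-1/2}\bm v\|_{\partial\Omega},
\end{equation*}
and the same distribution of factors gives $C(h^{p-d/2}+\delta)$, with $\gamma_D$ absorbed into $C$. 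Finally, since $p>d/2$, given $\vartheta$ I choose $\delta_0=\vartheta c_s/(2C)$ and $h_0$ so small that $Ch_0^{p-d/2}\le\vartheta c_s/2$.

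The step I expect to be delicate is not the estimates themselves but their legitimacy. First, the $L^\infty$ inverse bound $\|\Grad\bm\chi_h\|_{L^\infty(\Omega_T)}\le Ch^{-d/2}|||\bm\chi_h|||$ must hold on all of $\Omega_T$ with cut-independent constant. This needs \eqref{eq:extension}, which is why the lemma is stated in the full triple norm. Second, $c_s$ must be uniform over $\mathcal B_h$, so that the comparison "defect $\le\vartheta c_s$" is meaningful. This requires checking that Assumption~\ref{ass:ellipticity} holds uniformly for states in $\mathcal B_h$ once $\delta$ is small, which again follows from the $L^\infty$ closeness of $\Grad\bm w$ to $\Grad I_h\bm u$.
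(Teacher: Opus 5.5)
Your proposal is correct and matches the paper's proof essentially step for step. It uses the same pointwise bound $|D\mathbb L(\bm w)|\le C_{L'}$ and the same distribution of factors: a single $L^\infty$ inverse factor $h^{-d/2}$ on $\Grad\bm{\delta u}$, the weighted cut trace on $\Grad\bm v$ (or on $\bm v$ for $T^{(\eta)}$), and the refined trace smallness $\|h^{-1/2}\bm w\|_{\partial\Omega}\le C(h^p+\delta h^{d/2})$. It also uses the same branchwise bound on $D\eta$ and the same choice of $\delta_0$ and $h_0$. The only step the paper spells out that you take for granted is the one-line check that differentiating \eqref{eq:fbound} exactly and subtracting \eqref{eq:kbound} leaves precisely $T^{(3)}$ and $T^{(\eta)}$, since the bulk and ghost-penalty tangents are exact.
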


\begin{proof}
    The stated decomposition of the defect follows by differentiating the boundary residual \eqref{eq:fbound}
    exactly and subtracting the assembled tangent \eqref{eq:kbound}: the bulk and ghost-penalty tangents are exact,
    the three displayed Nitsche terms reproduce the remaining boundary derivatives,
    and what is left is precisely $T^{(3)}$ and $T^{(\eta)}$. For $T^{(3)}$, since
    $|D\mathbb L(\bm w)| \leq C_{L'}$ pointwise (for $\delta$ small the deformation gradients stay in the
    non-degenerate range, as in Lemma~\ref{lem:boundarylip}), distributing the factors as there:
    the $L^\infty$ inverse bound on $\Grad\bm{\delta u}$, the weighted trace on $\Grad\bm v$, and the trace smallness
    of the iterate,
    \begin{equation*}
        |T^{(3)}(\bm w)[\bm{\delta u}, \bm v]|
        \leq C_{L'} \, \|\Grad\bm{\delta u}\|_{L^\infty(\partial\Omega)} \, \|h^{1/2}\Grad\bm v\|_{\partial\Omega} \, \|h^{-1/2}\bm w\|_{\partial\Omega}
        \leq C_{L'} C_{\mathrm{inv}} \, h^{-d/2} \cdot C \big( h^p + \delta h^{d/2} \big) \, |||\bm{\delta u}||| \, |||\bm v||| ,
    \end{equation*}
    which is \eqref{eq:defectbound}. For $T^{(\eta)}$, on a fixed branch $|D\eta(\bm w)[\bm{\delta u}]| \leq
        \tfrac12 \|D\mathbb L(\bm w)[\Grad\bm{\delta u}]\|_1 \leq C C_{L'} |\Grad\bm{\delta u}|$ pointwise (and $D\eta = 0$
    on the $2\mu_e$ branch), so
    \begin{equation*}
        |T^{(\eta)}(\bm w)[\bm{\delta u}, \bm v]|
        \leq C \gamma_D C_{L'} \, \|\Grad\bm{\delta u}\|_{L^\infty(\partial\Omega)} \, \|h^{-1/2}\bm w\|_{\partial\Omega} \, \|h^{-1/2}\bm v\|_{\partial\Omega} ,
    \end{equation*}
    and the same combination of factors gives \eqref{eq:defectbound}. Since $p > d/2$, the first contribution
    vanishes as $h \to 0$; the final claim follows by choosing $\delta_0 \leq \vartheta c_s / (2C)$ and $h_0$ with
    $C h_0^{\,p-d/2} \leq \vartheta c_s / 2$. All constants arise from the same cut-independent ingredients as in
    Lemma~\ref{lem:boundarylip}.
\end{proof}

\begin{theorem}[Quasi-optimal convergence near regular solutions, after \cite{brezzi1980finite}]
    \label{thm:brr}
    Let $\bm{u}$ be a regular solution of the continuous problem, i.e. one at which the exact tangent operator is an
    isomorphism, and assume that its extension to the active mesh belongs to
    $(H^{p+1}(\Omega_T))^d\cap(W^{1,\infty}(\Omega_T))^d$, with deformation gradients in the non-degenerate $C^3$
    range of Assumption~\ref{ass:ellipticity}. Let $F_h$ denote the algorithmic residual (including the adaptive value
    $\eta(\bm v_h)$, but not its derivative in the residual itself) and let $\widetilde{DF}_h$ denote the assembled
    symmetric quasi-Newton tangent. For some $\delta>0$ independent of $h$ and of the cut, assume in the shrinking
    neighbourhood $\mathcal B_h=\{\bm v_h:|||I_h\bm u-\bm v_h|||\le\delta h^{d/2}\}$ that:
    \begin{enumerate}
        \item[(J0)] there is a constant $c_s>0$, independent of $h$, of the cut, and of
              $\bm v_h\in\mathcal B_h$, such that
              \begin{equation}
                  \label{eq:uniformcoercivityball}
                  \langle\widetilde{DF}_h(\bm v_h)\bm w_h,\bm w_h\rangle
                  \ge c_s|||\bm w_h|||^2
                  \qquad\forall\bm w_h\in\mathbb V_h;
              \end{equation}
              Proposition~\ref{prop:coercivity} supplies this condition when its ellipticity and penalty hypotheses
              hold uniformly throughout $\mathcal B_h$;
        \item[(J1)] $F_h$ is Fr\'echet differentiable on $\mathcal B_h$ and $DF_h$ has Lipschitz modulus at most
              $C h^{-d/2}$, with $C$ independent of $h$ and of the cut;
        \item[(J2)] for some $\vartheta\in[0,1)$, independent of $h$ and of the cut,
              \begin{equation}
                  \label{eq:jacobiandefect}
                  \sup_{\bm v_h\in\mathcal B_h}
                  \|DF_h(\bm v_h)-\widetilde{DF}_h(\bm v_h)\|_{\mathcal L(\mathbb V_h,\mathbb V_h')}
                  \le \vartheta c_s,
              \end{equation}
              where the operator norm is induced by $|||\cdot|||$ and its dual norm, and $c_s$ is the uniform
              coercivity constant in \eqref{eq:uniformcoercivityball}; Lemma~\ref{lem:defect} supplies this bound,
              with $\vartheta \leq C(h^{p-d/2} + \delta)$, wherever the active branch of $\eta$ is fixed.
    \end{enumerate}
    Suppose also that the standard CutFEM consistency and quasi-optimal interpolation estimates
    $\|F_h(I_h\bm u)\|_{\mathbb V_h'}\le C|||\bm u-I_h\bm u|||$ and
    $|||\bm u-I_h\bm u|||\le C\inf_{\bm v_h\in\mathbb V_h}|||\bm u-\bm v_h|||$ hold uniformly in the cut. Then,
    provided the polynomial degree satisfies $p > d/2$ (so $p \geq 2$ in two dimensions, $p \geq 1$ in one), $\delta$
    may be chosen sufficiently small and there exists $h_0>0$ such that for all $h \leq h_0$ the discrete problem
    possesses a unique solution $\bm{u}_h$ in $\mathcal B_h$, and the error is quasi-optimal:
    \begin{equation}
        ||| \bm{u} - \bm{u}_h ||| \leq C \inf_{\bm{v}_h \in \mathbb{V}_h} ||| \bm{u} - \bm{v}_h ||| .
    \end{equation}
\end{theorem}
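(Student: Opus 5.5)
The plan is the classical Brezzi--Rappaz--Raviart/Newton--Kantorovich fixed-point argument, posed on the shrinking ball $\mathcal B_h$ and driven by the \emph{quasi-Newton} tangent. Set $\bm u_I := I_h\bm u$ and $\mathbf A := \widetilde{DF}_h(\bm u_I)$. By (J0) and symmetry, $\mathbf A$ is invertible with $\|\mathbf A^{-1}\|_{\mathcal L(\mathbb V_h',\mathbb V_h)} \le c_s^{-1}$, uniformly in $h$ and the cut. Define the simplified-Newton map
\begin{equation*}
    \Phi(\bm v_h) := \bm v_h - \mathbf A^{-1} F_h(\bm v_h), \qquad \bm v_h \in \mathcal B_h .
\end{equation*}
Its fixed points are exactly the discrete solutions. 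It therefore suffices to show that $\Phi$ maps $\mathcal B_h$ into itself and is a contraction there; Banach's theorem then gives existence and uniqueness in $\mathcal B_h$.

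\textbf{Contraction.} For $\bm w,\bm z\in\mathcal B_h$ I write
\begin{equation*}
    \Phi(\bm w)-\Phi(\bm z) = \mathbf A^{-1}\int_0^1 \big(\mathbf A - DF_h(\bm z+t(\bm w-\bm z))\big)(\bm w-\bm z)\,dt ,
\end{equation*}
which is legitimate because $\mathcal B_h$ is convex and (J1) gives differentiability. I then split the integrand as
\begin{equation*}
    \mathbf A - DF_h(\bm\xi) = \big[\widetilde{DF}_h(\bm u_I) - DF_h(\bm u_I)\big] + \big[DF_h(\bm u_I) - DF_h(\bm\xi)\big].
\end{equation*}
The first bracket is bounded by $\vartheta c_s$ by (J2). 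The second is bounded by $C h^{-d/2}\,|||\bm u_I-\bm\xi||| \le C\delta$ by (J1), since $\bm\xi\in\mathcal B_h$. Hence the Lipschitz constant of $\Phi$ is at most $\vartheta + C\delta/c_s$, which is below $1$ once $\delta < (1-\vartheta)c_s/(2C)$. The shrinking radius $\delta h^{d/2}$ is exactly what cancels the $h^{-d/2}$ in (J1).

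\textbf{Self-mapping.} I write $\Phi(\bm v_h) - \bm u_I = \big(\Phi(\bm v_h) - \Phi(\bm u_I)\big) - \mathbf A^{-1}F_h(\bm u_I)$. Using the contraction estimate and consistency, this gives
\begin{equation*}
    |||\Phi(\bm v_h)-\bm u_I||| \le q\,\delta h^{d/2} + c_s^{-1} C\,|||\bm u - \bm u_I|||, \qquad q<1 .
\end{equation*}
To close it I need $|||\bm u - \bm u_I||| \le C h^{p} \le (1-q)c_s\,\delta h^{d/2}/C$ for $h\le h_0$. For that I use the interpolation estimate $|||\bm u-I_h\bm u|||\le Ch^p|\bm u|_{H^{p+1}(\Omega_T)}$, together with the trace estimate assumed in Lemma~\ref{lem:boundarylip}, and the hypothesis $p>d/2$. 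This is the only place where $p>d/2$ enters beyond Lemma~\ref{lem:defect}. One subtlety is the order of constants: $\delta$ must be fixed first, from the contraction condition and from Lemma~\ref{lem:defect} if that supplies (J2), and only then $h_0$. I also have to check that $\delta$ stays compatible with the smallness needed in Lemmas~\ref{lem:boundarylip} and~\ref{lem:defect} to keep deformation gradients in the non-degenerate range; all of these are upper bounds on $\delta$, so this is consistent.

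\textbf{Error estimate.} For the fixed point $\bm u_h$ I use $\bm u_h - \bm u_I = \Phi(\bm u_h)-\Phi(\bm u_I) - \mathbf A^{-1}F_h(\bm u_I)$, which gives $(1-q)\,|||\bm u_h-\bm u_I||| \le c_s^{-1}C\,|||\bm u-\bm u_I|||$. The triangle inequality and quasi-optimality of $I_h$ then yield $|||\bm u-\bm u_h|||\le C\inf_{\bm v_h}|||\bm u-\bm v_h|||$, with constants built only from $c_s,\vartheta,\delta$ and the cut-independent constants in (J0)--(J2).

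\textbf{Main obstacle.} I expect the delicate step to be making the nonsmooth adaptive weight $\eta$ compatible with the Fr\'echet differentiability in (J1) and with the defect bound (J2), since Lemma~\ref{lem:defect} only controls $T^{(\eta)}$ where the active branch is fixed. I would handle this by treating (J1)--(J2) as stated hypotheses, as the theorem does, and by making sure that every estimate in the argument uses them only along segments contained in $\mathcal B_h$. The role of regularity of the continuous tangent is then absorbed into (J0), which provides the discrete inf-sup condition directly, so no separate discrete-stability perturbation argument is required.
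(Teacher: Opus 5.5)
Your argument is correct. It uses the same skeleton as the paper: a Banach fixed point on the shrinking ball $\mathcal B_h$, the radius $\delta h^{d/2}$ cancelling the $h^{-d/2}$ Lipschitz modulus of (J1), $\delta$ fixed before $h_0$, and $p>d/2$ entering only to fit the $O(h^p)$ consistency residual inside the ball. The difference lies in where the defect (J2) is spent.

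The paper first combines (J0) and (J2) into coercivity of the \emph{exact} derivative, $\langle DF_h(\bm v_h)\bm w_h,\bm w_h\rangle\ge(1-\vartheta)c_s|||\bm w_h|||^2$. It then runs the chord map $G_h(\bm v_h)=\bm v_h-DF_h(I_h\bm u)^{-1}F_h(\bm v_h)$, whose contraction factor is $O(\delta)$. The error bound comes from a first-order Taylor expansion about $I_h\bm u$, with the quadratic remainder $\tfrac12 L_h|||\bm u_h-I_h\bm u|||^2$ absorbed via the ball radius.

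You instead precondition with the assembled tangent $\widetilde{DF}_h(I_h\bm u)$. The defect then enters additively in the contraction factor $\vartheta+C\delta/c_s$, and you read the error bound directly off the contraction inequality, with no Taylor step.

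What your route buys:
\begin{itemize}
\item It never needs invertibility of the nonsymmetric exact derivative.
\item It invokes (J0) and (J2) only at the centre $I_h\bm u$.
\item It mirrors the simplified quasi-Newton iteration actually performed.
\end{itemize}
What the paper's route buys is a contraction factor that tends to zero with $\delta$, whereas yours is bounded below by $\vartheta$. Both arguments require $\vartheta<1$ strictly and yield the same quasi-optimal estimate.

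A minor point: invertibility of $\widetilde{DF}_h(I_h\bm u)$ with $\|\widetilde{DF}_h(I_h\bm u)^{-1}\|\le c_s^{-1}$ follows from coercivity alone, so the appeal to symmetry is unnecessary.
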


A corresponding conditional result for the graded and balanced-adaptive families, formulated with local-size analogues
of (J0)--(J2), is given in Theorem~\ref{thm:gradedqo}.

\begin{proof}
    We use a quantitative local form of the Brezzi--Rappaz--Raviart framework
    \cite{brezzi1980finite, caloz1997numerical}. The solution $\bm u_h$, consistency estimate and error bound concern
    the algorithmic residual $F_h$ and its exact derivative $DF_h$; the assembled quasi-Newton tangent enters only
    through \eqref{eq:jacobiandefect} to establish stability. Writing the continuous problem as $F(\bm u)=\bm 0$,
    regularity makes $DF(\bm u)$ an isomorphism, while smoothness makes it locally Lipschitz. The required ingredients
    are:
    \begin{enumerate}
        \item[(i)] \emph{Stability.} The discrete tangent $DF_h(\bm{v}_h)$ is invertible with
              $\|DF_h(\bm{v}_h)^{-1}\| \leq \gamma^{-1}$ uniformly in $h$ for $\bm{v}_h$ near $\bm{u}$. The uniform
              coercivity of the assembled symmetrised tangent $\widetilde{DF}_h$ is hypothesis~(J0); as noted
              there, Proposition~\ref{prop:coercivity} supplies it if its assumptions hold uniformly throughout
              $\mathcal B_h$. Hypothesis~(J2), i.e.~\eqref{eq:jacobiandefect}, accounts for both omitted contributions:
              the third-derivative boundary term and the derivative of the adaptive weight. Combining (J0) and (J2),
              for any $\bm w_h\in\mathbb V_h$,
              \[
                  \langle DF_h(\bm v_h)\bm w_h,\bm w_h\rangle
                  \ge \langle\widetilde{DF}_h(\bm v_h)\bm w_h,\bm w_h\rangle
                  - \|DF_h(\bm v_h)-\widetilde{DF}_h(\bm v_h)\|_{\mathcal L(\mathbb V_h,\mathbb V_h')}\,|||\bm w_h|||^2
                  \ge (1-\vartheta)c_s\,|||\bm w_h|||^2,
              \]
              so $DF_h$ is in fact coercive with constant $(1-\vartheta)c_s$, uniformly in $h$ and in the cut, and hence $\|DF_h(\bm
                  v_h)^{-1}\|\le[(1-\vartheta)c_s]^{-1}$ on $\mathcal B_h$.
        \item[(ii)] \emph{Consistency.} The assumed consistency and interpolation estimates give
              $\| F_h(I_h \bm{u}) \| \le C \inf_{\bm{v}_h}|||\bm{u}-\bm{v}_h||| \to 0$ as $h \to 0$, where $I_h$
              is the interpolant onto $\mathbb{V}_h$. The consistency estimate rests on the fact that the exact regular
              solution satisfies the extended discrete residual identity $F_h(\bm{u}) = 0$. Indeed, integration by
              parts in the bulk residual produces the boundary traction term, which is cancelled by the Nitsche
              first consistency integral in \eqref{eq:fbound}. The symmetric and
              penalty contributions vanish because the exact solution has zero trace on the Dirichlet boundary (written
              as $\partial\Omega$ in Section~\ref{bcconditions}), while natural-boundary contributions are already part
              of the continuous weak problem. The ghost-penalty terms \eqref{eq:fstab}, which are weighted sums of the normal-derivative jumps
              $[\![\partial_{n_F}^k\bm{u}]\!]$ over interior faces, vanish because $\bm{u}$ is smooth
              across those faces. Here $\bm{u}$ denotes a fixed Sobolev (Calder\'on--Zygmund/Stein) extension of the exact
              solution from $\Omega$ to the active mesh $\Omega_T$, preserving $\bm{u}\in (H^{p+1}(\Omega_T))^d \cap
                  (W^{1,\infty}(\Omega_T))^d$; this is needed because some faces of the active mesh lie in the fictitious
              region $\Omega_T\setminus\Omega$ where the ghost penalty and $F_h$ are evaluated.
              The asserted residual bound is precisely the standard local continuity/consistency estimate applied to
              $F_h(I_h\bm u)-F_h(\bm u)$, followed by the assumed quasi-optimal interpolation estimate in the triple norm;
              it is not inferred from (J1), which is posed only on the discrete neighbourhood $\mathcal B_h$.
        \item[(iii)] \emph{Lipschitz tangent.} Hypothesis~(J1) gives the required Lipschitz estimate for the complete
              derivative $DF_h$ on $\mathcal B_h$. The expected $h^{-d/2}$ scaling can already be seen in its bulk
              part: the bulk contribution to $DF_h(\bm{w})-DF_h(\bm{z})$ is $\int_\Omega \Grad\,\cdot\,:(\mathbb{L}(\bm{w})
                  -\mathbb{L}(\bm{z})):\Grad\,\cdot\,$, and since $\mathbb{L}=\partial^2\Psi$ is itself $C^1$ with
              $|\partial^3\Psi|\le C_{L'}$ on $\Omega_T$ by Assumption~\ref{ass:ellipticity},
              \[
                  \|\mathbb{L}(\bm{w})-\mathbb{L}(\bm{z})\|_{L^\infty} \le C_{L'}\|\Grad(\bm{w}-\bm{z})\|_{L^\infty}.
              \]
              Unlike the energy norm, $|||\cdot|||$ does \emph{not} control $\|\Grad\,\cdot\,\|_{L^\infty}$ uniformly in $h$; instead
              we use the polynomial inverse inequality $\|\Grad\bm{v}_h\|_{L^\infty}\le C_{\mathrm{inv}}\,
                  h^{-d/2}\|\Grad\bm{v}_h\|_{L^2}\le C_{\mathrm{inv}}\,h^{-d/2}|||\bm{v}_h|||$ on the active mesh. For
              $\bm{w},\bm{z}\in\mathcal{B}_h$ this gives
              \[
                  \|\mathbb{L}(\bm{w})-\mathbb{L}(\bm{z})\|_{L^\infty} \le C_{L'}C_{\mathrm{inv}}\,h^{-d/2}\,|||\bm{w}-\bm{z}|||,
              \]
              while the same inverse inequality keeps $\|\Grad\bm{w}\|_{L^\infty},\|\Grad\bm{z}\|_{L^\infty}$ within
              $C_{\mathrm{inv}}\delta$ of $\|\Grad I_h\bm{u}\|_{L^\infty}\le C\|\bm{u}\|_{W^{1,\infty}(\Omega_T)}$, so the iterates
              stay in the non-degenerate region $J\ge J_0$ where $C_{L'}$ is valid (here the $W^{1,\infty}$ regularity of $\bm{u}$
              enters). The bulk Lipschitz constant on $\mathcal{B}_h$ is thus bounded by $C_{L'}C_{\mathrm{inv}}\,h^{-d/2}$. The
              state-dependent boundary contributions not containing $D\eta$, including the third-derivative term, obey the same
              $h^{-d/2}$ modulus by Lemma~\ref{lem:boundarylip}, under the $C^{3,1}$ regularity assumed there. Hypothesis~(J1) is
              therefore effective only for the derivative of the adaptive weight $\eta$: keeping the active branch of the maximum and
              the signs entering the componentwise $1$-norm fixed removes the nonsmoothness obstruction, but the corresponding
              operator bound is not derived here (a fixed penalty weight would remove this last gap altogether). Writing $L_h\le C
                  h^{-d/2}$ for the resulting Lipschitz modulus of the complete derivative, the contraction factor is $L_h\delta
                  h^{d/2}\le C\delta$, which is made $<1$ by choosing $\delta$ small, independently of $h$. We stress that the modulus
              $C_{L'}$ requires the deformation gradient to stay in the non-degenerate, $C^3$ range of
              Assumption~\ref{ass:ellipticity}, which fails for instance at the mixed Dirichlet--Neumann junctions analysed in
              Section~\ref{sec:mixedjunction}, where the singular expansion \eqref{eq:cornerexpansion} drives
              $|\Grad\bm{u}|\to\infty$ as $r\to 0$ and $\bm{u}\notin W^{1,\infty}$. The theorem therefore applies only to the regular
              (corner-free) branch; at an unresolved junction the lower bound of Proposition~\ref{prop:junctionlower} takes over and
              caps the rate, and recovering quasi-optimality there requires the graded refinement of Section~\ref{sec:remedy} rather
              than this assumption.
    \end{enumerate}
    Under (i)--(iii) the BRR theorem applies. Concretely, define the discrete Newton map
    $G_h(\bm{v}_h) = \bm{v}_h - DF_h(I_h\bm{u})^{-1} F_h(\bm{v}_h)$. By (i) and (iii), $G_h$ is a contraction on the
    shrinking-radius ball $\mathcal{B}_h = \{ \bm{v}_h : ||| I_h\bm{u} - \bm{v}_h ||| \leq \delta\,h^{d/2} \}$, with
    $\delta$ chosen small enough that the contraction factor $L = C\delta < 1$ as in (iii),
    independently of $h$. For $G_h$ to map $\mathcal{B}_h$ into itself we need the consistency residual to fit inside the
    ball, $\| DF_h(I_h\bm{u})^{-1}F_h(I_h\bm{u})\| \le (1-L)\,\delta\,h^{d/2}$; by (i) and (ii) the left-hand side is
    $O(\inf_{\bm{v}_h}|||\bm{u}-\bm{v}_h|||) = O(h^p)$, so this holds for $h\le h_0$ precisely because $p > d/2$, the optimal
    error decaying faster than the ball radius. The Banach fixed-point theorem then yields a unique
    $\bm{u}_h$ in $\mathcal{B}_h$ with $F_h(\bm{u}_h)=\bm{0}$, which is the desired discrete solution.

    For the error bound, subtract the consistency identity. Since $F_h(\bm{u}_h)=\bm{0}$, a first-order Taylor expansion of
    $F_h$ about $I_h\bm{u}$ gives
    \[
        DF_h(I_h\bm{u})\,(\bm{u}_h - I_h\bm{u}) = -F_h(I_h\bm{u}) + R_h,
        \qquad \|R_h\| \leq \tfrac{1}{2}L_h\,|||\bm{u}_h - I_h\bm{u}|||^2,
    \]
    with $L_h\le C h^{-d/2}$ from (iii). Since $\bm{u}_h\in\mathcal{B}_h$ obeys $|||\bm{u}_h - I_h\bm{u}|||\le\delta
        h^{d/2}$, the remainder satisfies $\|R_h\|\le \tfrac12 C \delta\,|||\bm{u}_h - I_h\bm{u}|||$ and is absorbed into the
    left-hand side for $\delta$ small, independently of $h$. Using the stability bound (i) and estimating the consistency
    residual $F_h(I_h\bm{u})$ by the best approximation error (ii),
    \[
        ||| \bm{u}_h - I_h\bm{u} ||| \leq C \| F_h(I_h\bm{u}) \| \leq C \inf_{\bm{v}_h\in\mathbb{V}_h}
        ||| \bm{u} - \bm{v}_h |||.
    \]
    A triangle inequality with the quasi-optimal interpolation bound $|||\bm{u} - I_h\bm{u}|||\le
        C\inf_{\bm{v}_h}|||\bm{u}-\bm{v}_h|||$ then gives the quasi-optimal estimate.
\end{proof}

Combined with the approximation properties of the cut finite element space \cite{Hansbo_2017,
    Burman_Hansbo_Larson_Zahedi_2025}, quasi-optimality yields the rate $O(h^p)$ in the energy norm, and, by the usual
duality argument, $O(h^{p+1})$ in $L^2$ as in \eqref{eq:linearestimate}, provided the solution is sufficiently regular.

\paragraph{Remark (status of the hypotheses (J0)--(J2))} Their status is:
\begin{itemize}
    \item[(J0)] \emph{proved} by Proposition~\ref{prop:coercivity}, provided its hypotheses hold uniformly on
          $\mathcal B_h$.
    \item[(J1)] \emph{proved up to the weight term}: the bulk part in step (iii) of the proof and the boundary Nitsche
          terms in Lemma~\ref{lem:boundarylip} (requiring $\Psi \in C^{3,1}$); only the derivative of the nonsmooth
          adaptive weight $\eta$ remains a hypothesis.
    \item[(J2)] \emph{proved in the Newton neighbourhood} by Lemma~\ref{lem:defect}:
          $\|DF_h - \widetilde{DF}_h\| \leq C(h^{p-d/2} + \delta)$ on $\mathcal B_h$, so any $\vartheta < 1$ is met
          for $\delta$ small and $h \leq h_0$; at the discrete solution the defect is $O(h^{p-d/2})$, subject to the
          same $\eta$ caveat.
\end{itemize}
In particular, with a fixed penalty weight (J0)--(J2) are fully proved. The numerical results of
Section~\ref{testcase} confirm $O(h^{p+1})$ convergence in $L^2$ for $p = 2, 3$ on a sufficiently regular
(corner-free) solution. These results are consistent with the theorem in two dimensions; the $p=1$
case lies outside its degree threshold and is discussed in the following remark.

\paragraph{Remark (the degree threshold $p>d/2$ and the lowest order case)} The restriction $p>d/2$ in Theorem~\ref{thm:brr} is an artefact of closing the contraction in the energy norm: the bulk
Lipschitz modulus of the tangent carries an inverse-inequality factor $h^{-d/2}$, which is offset by the shrinking ball
radius $\delta h^{d/2}$ only when the optimal error $O(h^p)$ decays faster than the radius, i.e. $p>d/2$. This covers
$p\ge2$ in two dimensions and every order in one. The borderline lowest-order case $p=1$ in $d=2$ is not reached by
this argument, closing it rigorously would require a discrete $W^{1,\infty}$ stability bound on the linearised CutFEM
operator, which for unfitted discretisations is a substantial result in its own right and lies outside the present
scope. Accordingly, no optimal-rate claim is made here for $p=1$.

\section{Numerical Results}
\label{testcase}
The experiments use two geometries, a disc and a pole, two hyperelastic models, $\Psi_1$ and $\Psi_2$, and a body force
whose magnitude is controlled by a single dimensionless load factor $\loadf \ge 0$. We collect these settings here
before reporting the convergence studies.

\paragraph{Geometries and loading} The disc $\Omega_\circ$ is a smooth, corner-free domain (Section~\ref{sec:disc}); the pole $\Omega_{\sqcap}$ is a
vertical shaft of width $W$ and height $H$, flat at the bottom and closed at the top by a semicircular cap
(Section~\ref{sec:pole}). Each is loaded by a body force $\bm f_0$ entering the energy \eqref{eq:energy}, written as a
fixed base field scaled by the load factor $\loadf$. The two geometries carry different right-hand sides:
\begin{equation}
    \label{eq:rhs}
    \bm f_0^{\,\circ}(\bm X) = \loadf\,\bm b_\circ(\bm X)
    \qquad\text{(disc)},
    \qquad\qquad
    \bm f_0^{\,\sqcap}(\bm X) = \loadf\,\bm b_{\sqcap}(\bm X)
    \qquad\text{(pole)},
\end{equation}
where the fixed base body forces are the rotational field $\bm b_\circ(\bm X) = [\,X_2,\,-X_1\,]^\top$ for the disc
and the (essentially horizontal) field $\bm b_{\sqcap} = [\,10^{-2},\,-2\times10^{-4}\,]^\top\,\mathrm{N/mm^3}$ for the
pole. The disc has radius $10\,\mathrm{mm}$, and the pole has width $W = 20\,\mathrm{mm}$ and height $H =
    40\,\mathrm{mm}$. Increasing $\loadf$
drives the body deeper into the finite-strain regime; the load factors used in each experiment are reported
with the corresponding results. Figure~\ref{fig:setup} shows both geometries in their deformed configuration
on the coarsest mesh used in the experiments.

\begin{figure}[h]
    \centering
    \begin{minipage}{0.48\textwidth}
        \centering
        \includegraphics[height=5cm]{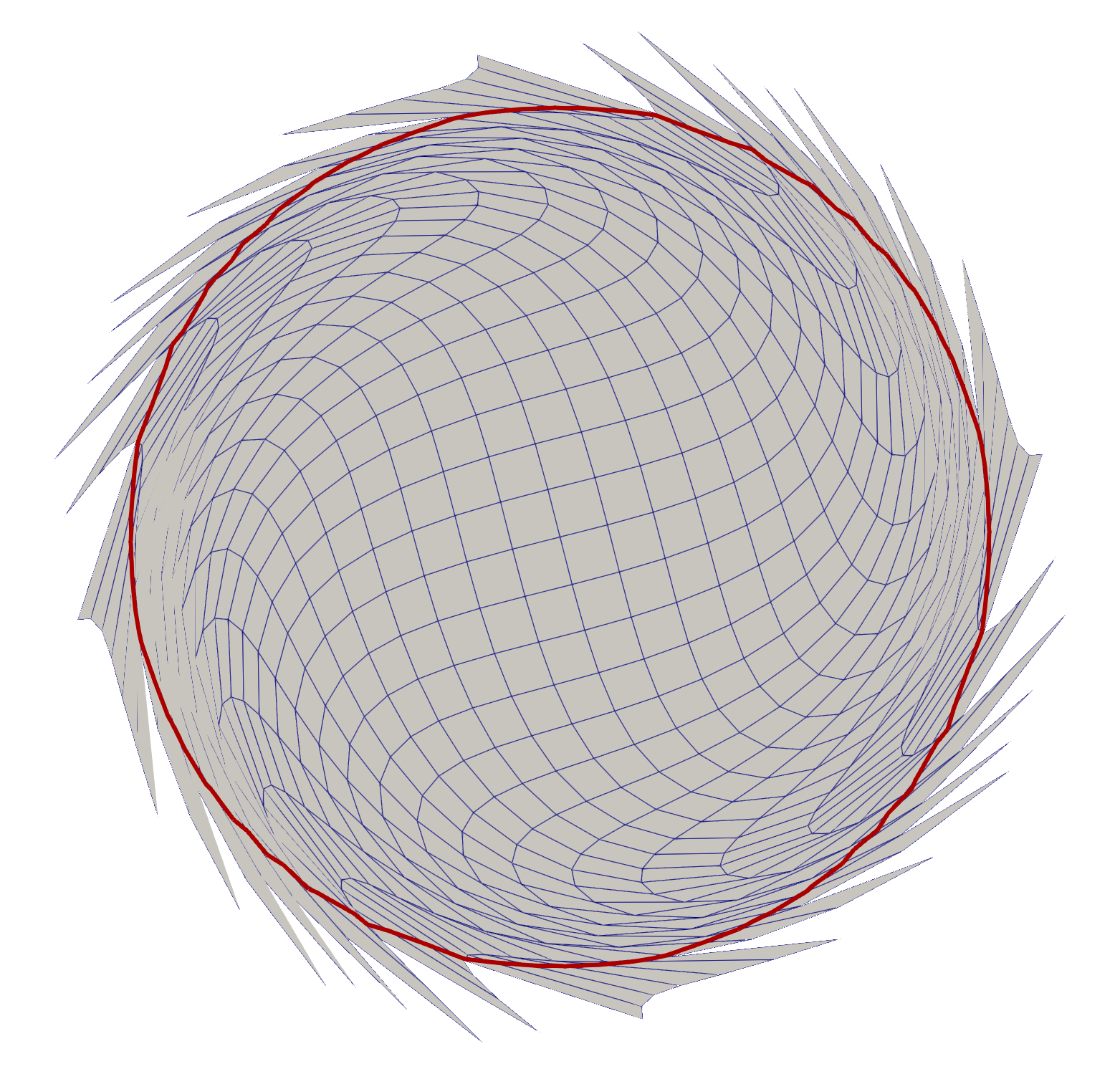}
    \end{minipage}\hfill
    \begin{minipage}{0.48\textwidth}
        \centering
        \includegraphics[height=5cm]{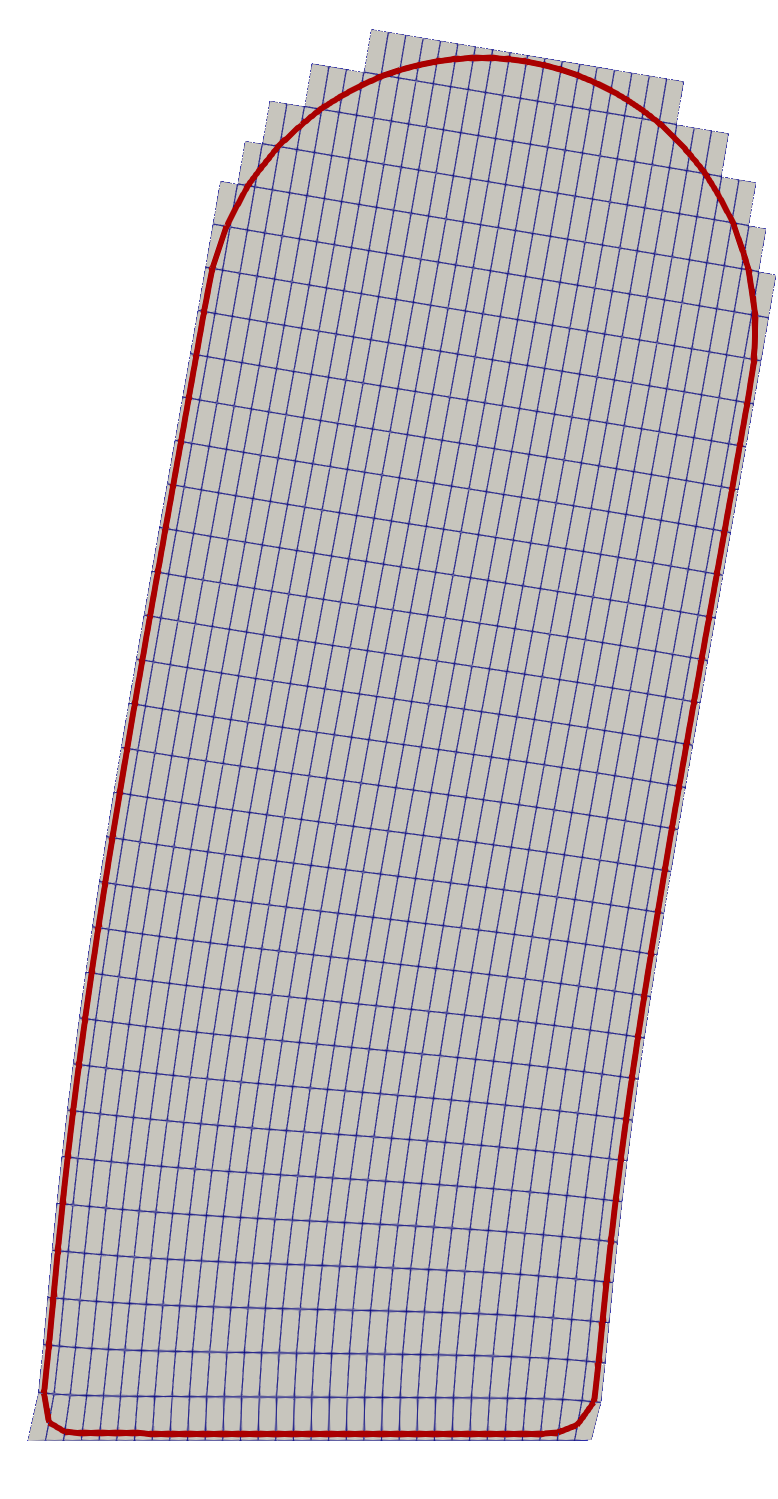}
    \end{minipage}
    \caption{Problem settings: the two geometries in their deformed configuration under the load
        \eqref{eq:rhs}, shown on the coarsest background mesh used in the experiments, with the immersed boundary
        $\partial\Omega$ superimposed. \emph{Left (a):} the disc $\Omega_\circ$ at load factor $\loadf = 0.5$. \emph{Right (b):} the pole
        $\Omega_{\sqcap}$ at load factor $\loadf = 0.25$. We use those loads as defaults in our experiments. The apparent
        rounding of the pole corners is a visualisation artefact because the output follows the level-set representation.
        In the implementation, the standard level-set quadrature is replaced on corner-containing cells by a dedicated
        corner quadrature that integrates the sharp geometry exactly.}
    \label{fig:setup}
\end{figure}

\paragraph{Constitutive models} To exercise the model-independence of the formulation we use two hyperelastic strain-energy densities, $\Psi_1$ and
$\Psi_2$, run through the \emph{identical} code paths: only the scalar energy passed to the AD layer changes, while the
residual, tangent, and solver are untouched.

The first is the compressible neo-Hookean model, widely adopted for rubber-like materials owing to its simplicity and
good predictive capability,
\begin{equation}\label{eq:hypref}
    \Psi_1 = \frac{\mu_e}{2} \left( \operatorname{tr} \bm{C} - \operatorname{tr} \bm{I} - 2 \log J \right) + \lambda_e \log^2 J,
\end{equation}
where $\mu_e$ is the shear modulus and $\lambda_e$ the Lam\'e constant. The volumetric term is often written
with an additional factor $1/2$, i.e. $\frac{\lambda_e}{2}\log^2 J$; the two conventions differ only by a
rescaling of $\lambda_e$, and we follow the form without this factor, as used e.g. in \cite{davydov2020matrix}.
This model was examined with \emph{manually} derived tangents in \cite{davydov2020matrix, Schussnig_2025},
providing a reliable baseline for our automatically generated code. We refer to \cite{ogden1972large,
    treloar1976mechanics, simo1984remarks, simo1998numerical} for general discussions of hyperelastic strain
energies.

The second model, \modelB, separates the isochoric and volumetric responses,
\begin{equation}\label{eq:hypref2}
    \Psi_2 = \frac{\mu_e}{2}\left( J^{-2/d}\operatorname{tr}\bm{C} - \operatorname{tr}\bm{I} \right)
    + \frac{\kappa_e}{2}\left( J - 1 \right)^2,
\end{equation}
with bulk modulus $\kappa_e$. It differs qualitatively from $\Psi_1$ in its volumetric coupling, yet requires
no change to the implementation beyond \eqref{eq:hypref2} itself, consistent with the
AD-based, energy-only formulation.

In all experiments the moduli $\mu_e$, $\lambda_e$, and $\kappa_e$ are derived from a fixed Young's modulus $E_e =
    6.0\,\mathrm{MPa}$ and the Poisson ratio $\nu$, the latter being varied as indicated in each study, through the
standard relations
\begin{equation}
    \label{eq:moduli}
    \mu_e = \frac{E_e}{2(1+\nu)}, \qquad
    \lambda_e = \frac{E_e\,\nu}{(1+\nu)(1-2\nu)}, \qquad
    \kappa_e = \lambda_e + \tfrac{2}{3}\mu_e = \frac{E_e}{3(1-2\nu)} .
\end{equation}
Here $\lambda_e$ is the value inserted into \eqref{eq:hypref} in the convention adopted there, i.e. with the
volumetric term $\lambda_e\log^2 J$ and no additional factor $1/2$.

\paragraph{Parameters and solver}
\label{sec:implementation}
The Nitsche penalty is $\gamma_D = m\,(p+1)p$ with $m = 5$ throughout, and the ghost penalty
\eqref{eq:kstab} is scaled by the shear modulus, $\gamma_A = \mu_e$, i.e. $\alpha = 1/(2(1+\nu))$ in the
notation $\gamma_A = \alpha E_e$ of \eqref{eq:gpenergy}; sensitivity to both parameters is examined in
Section~\ref{sec:condnumber}. All linear systems are solved with the sparse direct solver \texttt{UMFPACK}
\cite{davis2004algorithm}, and the Newton iteration is terminated when the Euclidean norm of the assembled
residual falls below the absolute tolerance $10^{-6}$, with $\beta = 1$ and load continuation as in
Section~\ref{sec:newtonconv}. The direct solver is a convenience for the two-dimensional problem sizes
considered here and is not a recommendation: better options are available for cut discretisations, in
particular geometric multigrid with vertex-patch smoothers, for which convergence bounds independent of the
mesh size and of the cut position have been established \cite{wichrowski2026multigridcutfem,
    ludescher2020multigrid, cui2025multigrid}.

\subsection{Convergence of Newton's method}
\label{sec:newtonconv}

Lemma~\ref{lem:descent} shows that each quasi-Newton correction is a descent direction for the iterate-frozen energy,
while Lemma~\ref{lem:defect} bounds the Jacobian defect near the discrete solution by $O(h^{p-d/2})$. We assess whether
the unfitted discretisation nevertheless degrades nonlinear robustness by comparing its iteration counts with a
body-fitted reference for the Neo-Hookean material at $\nu=0.45$.

In Table~\ref{tab:newton}, \emph{Steps} is the minimal number of equal load increments required to reach the target
$\zeta$, and the remaining columns give Newton iterations per increment for the CutFEM and matching discretisations.
The counts span all increments, $p=1,2,3$, and refinement levels $1,\dots,5$. Both methods required the same number of
load increments. No line search was used: $\beta=1$, with load continuation used throughout.
The pole converges in one increment except at the largest load on the finest, highest-degree grids; the more strongly
deformed disc requires up to three. Iterations per increment vary little with $h$ or $p$, and the variation with $p$, where a
trend is discernible at all, is a decrease with degree (from $8$--$11$ at $p=1$ to $6$--$8$ at $p=3$ for the disc at $\zeta=0.5$). CutFEM exceeds the
matching count by at most one iteration for $p\geq2$ and three for $p=1$, with no deterioration under refinement.

\begin{table}[h]
    \centering
    \begin{tabular}{llccc}
        \toprule
                              & $\zeta$ & Steps & CutFEM its./step & Matching its./step \\
        \midrule
        \multirow{3}{*}{Disc} & 0.10    & 1     & 6--8             & 5--6               \\
                              & 0.25    & 2     & 6--10            & 6--7               \\
                              & 0.50    & 3     & 6--11            & 6--8               \\
        \midrule
        \multirow{3}{*}{Pole} & 0.10    & 1     & 5                & 4--5               \\
                              & 0.25    & 1     & 6--7             & 5--6               \\
                              & 0.50    & 1     & 7--9             & 5--8               \\
        \bottomrule
    \end{tabular}

    \caption{Newton iterations per load increment (Neo-Hookean, $\nu=0.45$), given as the range over all increments,
        polynomial degrees $p=1,2,3$ and refinement levels $1,\dots,5$. The \emph{Steps} column is for $p=2$ at
        refinement level~3.}
    \label{tab:newton}
\end{table}

\subsection{Error measurement against a matching discretisation}
\label{sec:errormethod}
Because closed-form solutions are unavailable, on each refinement level we solve the same problem with a body-fitted
discretisation $\bm u_h^{\mathrm m}$ and with CutFEM $\bm u_h^{\mathrm c}$, and report
$\|\bm u_h^{\mathrm c}-\bm u_h^{\mathrm m}\|$ in the relevant norm. Accordingly, the $L^2$-difference plotted in
Figures~\ref{fig:discconvergence}, \ref{fig:poleconvergence}, and \ref{fig:penaltysensitivity} is
$\|\bm u_h^{\mathrm c}-\bm u_h^{\mathrm m}\|_{L^2}$.

This difference is enough to certify the CutFEM rate, by the triangle inequality. Writing $\bm u$ for the exact
solution,
\begin{equation}
    \label{eq:triangle}
    \|\bm u - \bm u_h^{\mathrm c}\| \;\le\; \|\bm u - \bm u_h^{\mathrm m}\|
    + \|\bm u_h^{\mathrm m} - \bm u_h^{\mathrm c}\|,
    \qquad
    \|\bm u - \bm u_h^{\mathrm m}\| \;\le\; \|\bm u - \bm u_h^{\mathrm c}\|
    + \|\bm u_h^{\mathrm m} - \bm u_h^{\mathrm c}\| .
\end{equation}
If both the matching error and the measured difference are $O(h^k)$, \eqref{eq:triangle} proves that the CutFEM error
is $O(h^k)$; the second inequality gives the converse. The difference alone certifies no rate, since both discrete
solutions approximate the same exact solution. Thus the certified rate is imported from body-fitted theory, never
inferred from a possibly faster-decaying difference.

On smooth domains, an $O(h^{p+1})$ difference therefore certifies the optimal CutFEM $L^2$ rate
(Section~\ref{sec:disc}). At a mixed Dirichlet--Neumann junction, the matching rate is instead capped at
$h^{\min(p+1,\,2\lambda)}$ (Section~\ref{sec:mixedjunction}), so the difference certifies the same CutFEM rate; the
lower bounds of Proposition~\ref{prop:junctionlower} and Corollary~\ref{cor:junctionlowerL2} exclude improvement beyond
$h^\lambda$ in energy and $h^{1+\lambda}$ in $L^2$ on the same space.

Unlike a smooth manufactured solution, this comparison retains the corner phenomenon while isolating the cost of the
cut. The AD-generated constitutive code is validated separately against the manual implementations of
\cite{davydov2020matrix, Schussnig_2025} in \cite{wichrowski2025matrixfreemethodsfinitestrainelasticity}.

\subsection{Smooth domain, all Dirichlet: optimal convergence}
\label{sec:disc}
We first verify the method on a configuration free of both boundary-condition junctions and geometric
corners: the domain $\Omega = \Omega_\circ$ is a disc, and homogeneous Dirichlet conditions are imposed weakly
on the \emph{entire} boundary $\partial\Omega$ through the Nitsche terms of Section~\ref{bcconditions}. The
body is loaded by the body force $\bm f_0^{\,\circ} = \loadf\,\bm b_\circ$ from \eqref{eq:rhs}.
Because the boundary is smooth and carries a single condition type, the exact solution is smooth and the
regularity assumption of Theorem~\ref{thm:brr} holds, so the optimal rate is expected for every polynomial
degree. The error is measured against a matching discretisation as described in
Section~\ref{sec:errormethod}; since the disc boundary is smooth, the matching method converges optimally and
the difference therefore certifies the CutFEM rate directly.

Figure~\ref{fig:discconvergence} shows the results for both constitutive models $\Psi_1$ and $\Psi_2$ at Poisson ratio
$\nu = 0.45$, for two load levels: load factor $0.1$ (dashed lines) and $0.5$ (solid lines). At the lower load the $p =
    2$ and $p = 3$ errors converge at the optimal rate $O(h^{p+1})$ for both models, confirming Theorem~\ref{thm:brr} in
the nearly-linear regime, while the $p = 1$ rates approach the optimal value from below over the levels computed. At
load factor $0.5$ the two models separate: for the neo-Hookean model~(a) the $p = 2$ and $p = 3$ rates rise under
refinement and reach their optimal slopes on the finest levels, whereas for the \modelBshort{} model~(b) they are still
rising and remain short of the optimal values --- in both cases a preasymptotic regime whose onset moves to finer
meshes as the load increases. The $p = 1$ sequences degrade in both panels, for different reasons; we recall that $p =
    1$ in two dimensions lies outside the hypotheses of Theorem~\ref{thm:brr}, which requires $p > d/2$. For the
neo-Hookean model a Poisson-ratio sweep at this load shows the rate improving monotonically as the material is made
more compressible, the signature of incipient volumetric locking of low-order elements~\cite{babuvska1992locking}; the
value $\nu = 0.45$ used here sits at the stiff end of that sweep. Lowering $\nu$ as far as $0.1$ improves the observed
rate but does not restore the optimal $p=1$ rate over the computed levels, showing that locking is compounded by a
load-dependent preasymptotic effect. For the \modelBshort{} model the error instead stalls at an $h$-independent floor
that persists throughout the same Poisson-ratio sweep, which excludes locking: at this load the deformation is extreme,
beyond what a piecewise-linear displacement field can represent, and the $p = 1$ sequence does not converge in this
regime.

\begin{figure}[h]
    \centering
    \subcaptionbox{Neo-Hookean model $\Psi_1$\label{fig:discconvergence:nh}}{%
    \begin{tikzpicture}
        \begin{semilogyaxis}[
                width=0.48\textwidth, height=6cm,
                xlabel={number of refinements},
                ylabel={$\|\bm u_h^{\mathrm c}-\bm u_h^{\mathrm m}\|_{L^2}$},
                xtick={1,2,3,4,5},
                xmax=5.2,
                legend pos=south west,
                legend cell align=left,
                legend style={font=\footnotesize},
                grid=major,
            ]
            \addplot[blue, thick, mark=*] coordinates {
                    (1,9.41573) (2,6.07453) (3,3.25052) (4,2.64941) (5,1.48594)
                };
            \addlegendentry{$p=1$}
            \addplot[red, thick, mark=square*] coordinates {
                    (1,14.2025) (2,2.77918) (3,0.374031) (4,0.0514442) (5,0.00532243)
                };
            \addlegendentry{$p=2$}
            \addplot[teal, thick, mark=triangle*] coordinates {
                    (1,6.41491) (2,0.561008) (3,0.028134) (4,0.00154552) (5,8.00081e-05)
                };
            \addlegendentry{$p=3$}
            \addplot[blue, thick, dashed, mark=o] coordinates {
                    (1,4.19005) (2,3.74981) (3,1.64512) (4,0.523096) (5,0.144226)
                };
            \addplot[red, thick, dashed, mark=square] coordinates {
                    (1,2.1131) (2,0.174262) (3,0.0211216) (4,0.00246798) (5,0.0002717)
                };
            \addplot[teal, thick, dashed, mark=triangle] coordinates {
                    (1,0.279974) (2,0.0122881) (3,0.000561646) (4,2.77411e-05) (5,1.4171e-06)
                };
            \addplot[gray, dashed, domain=1:5, samples=2] {60*4^(-x)}
            node[pos=0.85, above, sloped, font=\footnotesize] {$h^{2}$};
            \addplot[gray, dashed, domain=1:5, samples=2] {40*8^(-x)}
            node[pos=0.85, above, sloped, font=\footnotesize] {$h^{3}$};
            \addplot[gray, dashed, domain=1:5, samples=2] {8*16^(-x)}
            node[pos=0.85, above, sloped, font=\footnotesize] {$h^{4}$};
        \end{semilogyaxis}
    \end{tikzpicture}%
}\hfill
\subcaptionbox{\modelBshort{} model $\Psi_2$\label{fig:discconvergence:psi2}}{%
    \begin{tikzpicture}
        \begin{semilogyaxis}[
                width=0.48\textwidth, height=6cm,
                xlabel={number of refinements},
                ylabel={$\|\bm u_h^{\mathrm c}-\bm u_h^{\mathrm m}\|_{L^2}$},
                xtick={1,2,3,4,5},
                xmax=5.2,
                legend pos=south west,
                legend cell align=left,
                legend style={font=\footnotesize},
                grid=major,
            ]
            \addplot[blue, thick, mark=*] coordinates {
                    (1,8.5036) (2,5.18564) (3,3.40128) (4,3.82411) (5,3.36728)
                };
            \addlegendentry{$p=1$}
            \addplot[red, thick, mark=square*] coordinates {
                    (1,13.6113) (2,4.7096) (3,1.5063) (4,0.400188) (5,0.0627609)
                };
            \addlegendentry{$p=2$}
            \addplot[teal, thick, mark=triangle*] coordinates {
                    (1,8.67143) (2,2.54796) (3,0.434074) (4,0.0449883) (5,0.00287657)
                };
            \addlegendentry{$p=3$}
            \addplot[blue, thick, dashed, mark=o] coordinates {
                    (1,3.26072) (2,2.53315) (3,1.08315) (4,0.342479) (5,0.0944233)
                };
            \addplot[red, thick, dashed, mark=square] coordinates {
                    (1,2.10101) (2,0.194234) (3,0.0208533) (4,0.00242529) (5,0.000276216)
                };
            \addplot[teal, thick, dashed, mark=triangle] coordinates {
                    (1,0.22688) (2,0.0122334) (3,0.000630999) (4,3.23418e-05) (5,1.66889e-06)
                };
            \addplot[gray, dashed, domain=1:5, samples=2] {60*4^(-x)}
            node[pos=0.85, above, sloped, font=\footnotesize] {$h^{2}$};
            \addplot[gray, dashed, domain=1:5, samples=2] {40*8^(-x)}
            node[pos=0.85, above, sloped, font=\footnotesize] {$h^{3}$};
            \addplot[gray, dashed, domain=1:5, samples=2] {8*16^(-x)}
            node[pos=0.85, above, sloped, font=\footnotesize] {$h^{4}$};
        \end{semilogyaxis}
    \end{tikzpicture}%
}
    \caption{Convergence of the displacement difference $\|\bm u_h^{\mathrm c}-\bm u_h^{\mathrm m}\|_{L^2}$ for the all-Dirichlet disc test case
        ($p=1,2,3$, Poisson ratio $\nu = 0.45$).
        (a)~neo-Hookean model $\Psi_1$: solid lines load factor $0.5$, dashed lines load factor $0.1$;
        reference slopes $h^{p+1}$ shown in gray.
        (b)~\modelBshort{} model $\Psi_2$: solid lines load factor $0.5$, dashed lines load factor $0.1$;
        reference slopes $h^{p+1}$ shown in gray.
        Theorem~\ref{thm:brr} predicts the optimal rates $O(h^{p+1})$ for $p \geq 2$; see the text for the
        $p = 1$ behaviour at load factor $0.5$.}
    \label{fig:discconvergence}
\end{figure}

\subsection{Pole under horizontal load: the corner cap}
\label{sec:pole}
We now turn to a geometry that deliberately contains corners: the pole $\Omega_{\sqcap}$ loaded by the body force $\bm f_0^{\,\sqcap} = \loadf\,\bm b_{\sqcap}$ of
\eqref{eq:rhs}. On the same geometry we consider two boundary-condition settings, which isolate
the two corner types analysed in Section~\ref{sec:mixedjunction}.

In the \emph{mixed} setting, homogeneous Dirichlet conditions (clamping) are imposed only on the flat bottom
$\Gamma_D$, while the remainder $\Gamma_N = \partial\Omega \setminus \Gamma_D$ is traction-free. The two bottom
corners, where $\Gamma_D$ meets $\Gamma_N$, are sharp corners between straight segments; the boundary integrals on the
corner-containing cells are split \emph{exactly} at the corner points, so the Nitsche terms act precisely on
$\Gamma_D$, while the traction-free $\Gamma_N$ is natural and carries no boundary term. In the \emph{pure-Dirichlet}
setting, the same homogeneous condition is imposed weakly on the entire boundary, so the two bottom corners become
clamped--clamped right-angle corners.

Both settings degrade the convergence, but to different extents. At the mixed Dirichlet--Neumann junctions the singular
exponent satisfies $\lambda(\pi/2,\nu) < 1$, capping the $L^2$ rate at $2\lambda < 2$ \emph{independently of the
    polynomial degree}: the $p = 1, 2, 3$ curves collapse onto a single sub-optimal slope. At the clamped--clamped corners
of the pure-Dirichlet setting the exponent satisfies $\lambda \in (1, 1.63)$; the duality gain then saturates at one
full order, so the cap $\min(p+1,\, 1+\lambda)$ with $1+\lambda \in (2, 2.63)$ is milder: first-order elements ($p =
    1$) recover the optimal rate, whereas $p \geq 2$ are capped below $p+1$ and gain no higher-order convergence; the right
panel makes this failure to improve with degree explicit. Neither cap can be removed by refining uniformly, by aligning
the corner with a mesh vertex, or by tuning the stabilisation; the quantitative dependence on $\nu$ for both corner
types, measured in linear elasticity, is reported in Figure~\ref{fig:nusweep}.

\begin{figure}[h]
    \centering
    \subcaptionbox{Mixed boundary conditions\label{fig:poleconvergence:mixed}}{%
    \begin{tikzpicture}
        \begin{semilogyaxis}[
                width=0.48\textwidth, height=6cm,
                xlabel={number of refinements},
                ylabel={$\|\bm u_h^{\mathrm c}-\bm u_h^{\mathrm m}\|_{L^2}$},
                xtick={1,2,3,4},
                xmax=4.2,
                legend pos=south west,
                legend cell align=left,
                legend style={font=\footnotesize},
                grid=major,
            ]
            \addplot[blue, thick, mark=*] coordinates {
                    (1,30.932) (2,24.7521) (3,13.6955) (4,6.28091) (5,2.66578)
                };
            \addplot[red, thick, mark=square*] coordinates {
                    (1,17.9329) (2,7.66447) (3,2.91658) (4,1.12803)
                };
            \addplot[teal, thick, mark=triangle*] coordinates {
                    (1,6.05183) (2,2.14147) (3,0.786099) (4,0.298213)
                };
            \addplot[blue, thick, dashed, mark=o] coordinates {
                    (1,35.0145) (2,22.0442) (3,10.4569) (4,4.36494) (5,1.7286)
                };
            \addplot[red, thick, dashed, mark=square] coordinates {
                    (1,13.4797) (2,5.54249) (3,2.07356) (4,0.787864)
                };
            \addplot[teal, thick, dashed, mark=triangle] coordinates {
                    (1,4.57338) (2,1.59177) (3,0.596209) (4,0.235441)
                };
            \addplot[gray, dashed, domain=1:4, samples=2] {3*4^(2-x)}
            node[pos=0.55, above, sloped, font=\footnotesize] {$h^2$};
            \addplot[gray, dashed, domain=1:4, samples=2] {8^(2-x)}
            node[pos=0.55, above, sloped, font=\footnotesize] {$h^3$};
            \addlegendimage{blue, thick, mark=*}
            \addlegendentry{$p=1$}
            \addlegendimage{red, thick, mark=square*}
            \addlegendentry{$p=2$}
            \addlegendimage{teal, thick, mark=triangle*}
            \addlegendentry{$p=3$}
        \end{semilogyaxis}
    \end{tikzpicture}%
}\hfill
\subcaptionbox{Pure Dirichlet boundary conditions\label{fig:poleconvergence:dirichlet}}{%
    \begin{tikzpicture}
        \begin{semilogyaxis}[
                width=0.48\textwidth, height=6cm,
                xlabel={number of refinements},
                ylabel={$\|\bm u_h^{\mathrm c}-\bm u_h^{\mathrm m}\|_{L^2}$},
                xtick={1,2,3,4},
                xmax=4.2,
                legend pos=south west,
                legend cell align=left,
                legend style={font=\footnotesize},
                grid=major,
            ]
            \addplot[blue, thick, mark=*] coordinates {
                    (1,0.0195804) (2,0.0101803) (3,0.00395486) (4,0.00124686) (5,0.000354988)
                };
            \addplot[red, thick, mark=square*] coordinates {
                    (1,0.0106545) (2,0.0031265) (3,0.000615684) (4,0.000142821)
                };
            \addplot[teal, thick, mark=triangle*] coordinates {
                    (1,0.00372694) (2,0.000765502) (3,0.000179587) (4,4.30407e-05)
                };
            \addplot[blue, thick, dashed, mark=o] coordinates {
                    (1,0.0334751) (2,0.0156684) (3,0.0055224) (4,0.0016235) (5,0.000454916)
                };
            \addplot[red, thick, dashed, mark=square] coordinates {
                    (1,0.0144796) (2,0.00396658) (3,0.00078978) (4,0.000176533)
                };
            \addplot[teal, thick, dashed, mark=triangle] coordinates {
                    (1,0.00486622) (2,0.00102674) (3,0.000242178) (4,5.47558e-05)
                };
            \addplot[gray, dashed, domain=1:4, samples=2] {0.015*4^(1-x)}
            node[pos=0.55, above, sloped, font=\footnotesize] {$h^2$};
            \addplot[gray, dashed, domain=1:4, samples=2] {0.002*8^(1-x)}
            node[pos=0.55, above, sloped, font=\footnotesize] {$h^3$};
            \addlegendimage{blue, thick, mark=*}
            \addlegendentry{$p=1$}
            \addlegendimage{red, thick, mark=square*}
            \addlegendentry{$p=2$}
            \addlegendimage{teal, thick, mark=triangle*}
            \addlegendentry{$p=3$}
        \end{semilogyaxis}
    \end{tikzpicture}%
}
    \caption{Convergence of the difference $\|\bm u_h^{\mathrm c}-\bm u_h^{\mathrm m}\|_{L^2}$ against a body-fitted
        reference for the pole test case (perturbed background grid), for the neo-Hookean model $\Psi_1$
        (solid lines) and linear elasticity (dashed lines).
        \emph{(a)~Mixed} Dirichlet--Neumann boundary conditions: all degrees $p=1,2,3$ collapse onto a single
        sub-optimal slope, the corner cap $2\lambda(\pi/2,\nu)<2$ of \eqref{eq:l2cap}.
        \emph{(b)~Pure-Dirichlet} boundary conditions: the milder clamped--clamped cap $1+\lambda\in(2,2.63)$ leaves
        $p=1$ optimal while $p\ge2$ saturate below $p+1$. Reference slopes $h^2$ and $h^3$ are shown in gray.}
    \label{fig:poleconvergence}
\end{figure}

\subsection{Conditioning: cut independence, scaling in $h$, and the stabilisation parameters}
\label{sec:condnumber}

The condition number $\operatorname{cond}(\bm K)$ tests the combined coercivity and continuity bounds of
Proposition~\ref{prop:coercivity}. We evaluate it at $\bm u=\bm 0$ while translating the background grid through the
small-cut limit (Figure~\ref{fig:condnumber}a). With the ghost penalty, $\operatorname{cond}(\bm K)$ has no trend with
the cut position and varies by at most a factor $3.2$; without stabilisation the tangent loses definiteness. Over the
same translation sweep, the maximum-to-minimum variation in the $L^2$ error is $13\%$ for the disc and $4\%$ for the
pole at $p=1$, and $20\%$ for the disc and $28\%$ for the pole at $p=2$. At the
worst observed cut, uniform refinement gives rates close to $h^{-2}$ for $p=1,2$ on both geometries
(Figure~\ref{fig:condnumber}b), as predicted by Corollary~\ref{cor:conditioning}.

\begin{figure}[h]
    \centering
\subcaptionbox{Cut position: $\operatorname{cond}(\bm K)$ vs.\ grid shift\label{fig:condnumber:shift}}{%
    \begin{tikzpicture}
        \begin{semilogyaxis}[
                width=0.50\textwidth, height=6.3cm,
                xlabel={grid shift $\delta/h$},
                ylabel={$\operatorname{cond}(\bm K)$},
                xtick={0,0.1,0.2,0.3,0.4,0.5},
                xmin=-0.02, xmax=0.52,
                ymin=60, ymax=1e8,
                legend pos=north west,
                legend cell align=left,
                legend style={font=\footnotesize},
                grid=major,
            ]
            \addplot[blue, thick, mark=*, mark size=1.4] coordinates {
                    (0.00,111.9) (0.05,111.2) (0.10,117.5) (0.15,104.5) (0.20,103.1)
                    (0.25,99.3) (0.30,99.7) (0.35,99.0) (0.40,97.6) (0.45,99.6) (0.50,99.4)
                };
            \addplot[red, thick, mark=square*, mark size=1.2] coordinates {
                    (0.00,3029) (0.05,3380) (0.10,3431) (0.15,4320) (0.20,6266)
                    (0.25,4975) (0.30,3945) (0.35,4549) (0.40,3399) (0.45,4116) (0.50,3360)
                };
            \addplot[teal, thick, mark=triangle*, mark size=1.6] coordinates {
                    (0.00,3.005e5) (0.05,3.949e5) (0.10,4.383e5) (0.15,6.222e5) (0.20,9.652e5)
                    (0.25,6.987e5) (0.30,5.076e5) (0.35,6.402e5) (0.40,3.990e5) (0.45,5.330e5)
                    (0.50,3.808e5)
                };
            \addplot[blue, thick, dashed, mark=o, mark size=1.4] coordinates {
                    (0.00,1.282e5) (0.05,1.345e5) (0.10,1.327e5) (0.15,1.338e5) (0.20,1.337e5)
                    (0.25,1.336e5) (0.30,1.335e5) (0.35,1.334e5) (0.40,1.334e5) (0.45,1.334e5)
                    (0.50,1.334e5)
                };
            \addplot[red, thick, dashed, mark=square, mark size=1.2] coordinates {
                    (0.00,1.437e6) (0.05,1.517e6) (0.10,1.494e6) (0.15,1.508e6) (0.20,1.505e6)
                    (0.25,1.502e6) (0.30,1.499e6) (0.35,1.495e6) (0.40,1.492e6) (0.45,1.488e6)
                    (0.50,1.484e6)
                };
            \addlegendimage{thick, solid, mark=*, blue}
            \addlegendentry{$p=1$}
            \addlegendimage{thick, solid, mark=square*, red}
            \addlegendentry{$p=2$}
            \addlegendimage{thick, solid, mark=triangle*, teal}
            \addlegendentry{$p=3$}
        \end{semilogyaxis}
    \end{tikzpicture}%
}\hfill
\subcaptionbox{Refinement: $\operatorname{cond}(\bm K)$ vs.\ mesh size\label{fig:condnumber:refinement}}{%
    \begin{tikzpicture}
        \begin{semilogyaxis}[
                width=0.50\textwidth, height=6.3cm,
                xlabel={number of refinements},
                ylabel={$\operatorname{cond}(\bm K)$},
                xtick={1,2,3,4,5},
                xmin=0.8, xmax=5.2,
                ymin=30, ymax=6e8,
                legend pos=north west,
                legend cell align=left,
                legend style={font=\footnotesize},
                grid=major,
            ]
            \addplot[blue, thick, mark=*, mark size=1.4] coordinates {
                    (1,71.84) (2,103.1) (3,417.0) (4,1516) (5,6969)
                };
            \addplot[red, thick, mark=square*, mark size=1.2] coordinates {
                    (1,4.30e3) (2,6.27e3) (3,8.57e3) (4,3.42e4) (5,1.37e5)
                };
            \addplot[teal, thick, mark=triangle*, mark size=1.6] coordinates {
                    (1,6.13e5) (2,9.65e5) (3,6.58e5) (4,9.06e5) (5,3.42e6)
                };
            \addplot[blue, thick, dashed, mark=o, mark size=1.4] coordinates {
                    (1,3.633e4) (2,1.337e5) (3,4.653e5) (4,1.705e6) (5,6.466e6)
                };
            \addplot[red, thick, dashed, mark=square, mark size=1.2] coordinates {
                    (1,4.819e5) (2,1.505e6) (3,5.311e6) (4,1.982e7) (5,7.594e7)
                };
            \addplot[gray, dashed, mark=none, domain=1.6:5, samples=2] {6*4^x}
            node[pos=0.45, below, sloped, font=\footnotesize] {$h^{-2}$};
            \addplot[gray, dashed, mark=none, domain=2:5, samples=2] {1.3e6*4^(x-1)}
            node[pos=0.5, above, sloped, font=\footnotesize] {$h^{-2}$};
            \addlegendimage{thick, solid, mark=*, blue}
            \addlegendentry{$p=1$}
            \addlegendimage{thick, solid, mark=square*, red}
            \addlegendentry{$p=2$}
            \addlegendimage{thick, solid, mark=triangle*, teal}
            \addlegendentry{$p=3$}
        \end{semilogyaxis}
    \end{tikzpicture}%
}
    \caption{Condition number at $\bm u=\bm 0$ with $\gamma_A=\mu_e$: disc (solid, filled markers) and pole
        (dashed, open markers). \emph{(a)}~Grid-translation sweep in the $x$ direction at refinement level~2; $\operatorname{cond}(\bm K)$ varies by at most a
        factor $3.2$ and does not deteriorate near small cuts. \emph{(b)}~Uniform refinement with
        reference slope $h^{-2}$. Rates over the three finest levels are $2.03,2.00$ for the disc and $1.90,1.92$
        for the pole at $p=1,2$; the computed $p=3$ rates are lower, consistent with the upper bound of
        Corollary~\ref{cor:conditioning}.}
    \label{fig:condnumber}
\end{figure}

At the worst cut we also sweep $\gamma_A$ and $\gamma_D=m(p+1)p$ (Figure~\ref{fig:penaltysensitivity}). Below a
degree-dependent threshold the tangent becomes indefinite; above it, the error and condition number vary slowly over a
wide plateau. This threshold is not confined to the small-cut limit: already at a cut of finite fraction there is a
strictly positive weight $\gamma^*$ below which the stabilisation no longer controls the part of the cut cells lying
outside $\Omega$, the negative Nitsche consistency terms in \eqref{eq:kbound} are no longer absorbed, and the tangent
loses positive definiteness. Since its eigenvalues depend continuously on the cut position, any weight fixed below
$\gamma^*$ makes the translation sweep of Figure~\ref{fig:condnumber}a pass through a singular configuration, just as
the unstabilised case does. An aligned two-cell model in \cite{wichrowski2026multigridcutfem} makes this quantitative
and shows that the admissible weight decreases with the polynomial degree like $p^{-2}$. The ordering in
Figure~\ref{fig:penaltysensitivity}a is consistent with it: the $p=1$ threshold lies between $\gamma_A=10^{-1}\mu_e$
and $3\cdot10^{-1}\mu_e$, above the $p=2$ threshold, which lies below $10^{-1}\mu_e$. The defaults lie within this
plateau: $\gamma_A=\mu_e$ is near the minimum of $\operatorname{cond}(\bm
    K)$, and $m=5$ is about $2.5$ times the Nitsche coercivity threshold.

\begin{figure}[h]
    \centering
\subcaptionbox{Ghost penalty $\gamma_A$\label{fig:penalty:ghost}}{%
    \begin{tikzpicture}
        \begin{axis}[
                width=0.33\textwidth, height=5.4cm, scale only axis,
                xmode=log, ymode=log,
                xmin=0.01, xmax=2000,
                ymin=0.3, ymax=300,
                xtick={0.01,0.1,1,10,100,1000},
                xlabel={$\gamma_A/\mu_e$},
                ylabel={$\|\bm u_h^{\mathrm c}-\bm u_h^{\mathrm m}\|_{L^2}$},
                axis y line*=left,
                grid=major,
                legend pos=north west,
                legend cell align=left,
                legend style={font=\footnotesize},
            ]

            \draw[gray, densely dotted] (axis cs:1,0.3) -- (axis cs:1,300);
            \addplot[blue, thick, mark=*, mark size=1.4] coordinates {
                    (0.3,18.7) (1,27.4) (3,39.0) (10,59.3) (100,119.3) (1000,192.8)
                };
            \addplot[red, thick, mark=square*, mark size=1.2] coordinates {
                    (0.1,0.391) (0.3,0.595) (1,1.003) (3,1.844) (10,3.971)
                    (100,20.66) (1000,71.06)
                };
            \addlegendimage{thick, solid, mark=*, blue}
            \addlegendentry{$p=1$}
            \addlegendimage{thick, solid, mark=square*, red}
            \addlegendentry{$p=2$}
        \end{axis}
        \begin{axis}[
                width=0.33\textwidth, height=5.4cm, scale only axis,
                xmode=log, ymode=log,
                xmin=0.01, xmax=2000,
                ymin=60, ymax=4e6,
                axis x line=none,
                axis y line*=right,
                ylabel={$\operatorname{cond}(\bm K)$},
            ]
            \addplot[blue, thick, dashed, mark=o, mark size=1.4] coordinates {
                    (0.3,160) (1,103) (3,194) (10,527) (100,4528) (1000,3.912e4)
                };
            \addplot[red, thick, dashed, mark=square, mark size=1.2] coordinates {
                    (0.1,7533) (0.3,5367) (1,6266) (3,10603) (10,25287)
                    (100,1.924e5) (1000,1.781e6)
                };
        \end{axis}
    \end{tikzpicture}%
}\hfill
\subcaptionbox{Nitsche penalty $\gamma_D$\label{fig:penalty:nitsche}}{%
    \begin{tikzpicture}
        \begin{axis}[
                width=0.33\textwidth, height=5.4cm, scale only axis,
                xmode=log, ymode=log,
                xmin=0.7, xmax=1600,
                ymin=0.8, ymax=60,
                xtick={1,2,5,10,50,200,1000},
                xticklabels={$1$,$2$,$5$,$10$,$50$,$200$,$1000$},
                xlabel={$m$, \; $\gamma_D=m\,(p+1)p$},
                ylabel={$\|\bm u_h^{\mathrm c}-\bm u_h^{\mathrm m}\|_{L^2}$},
                axis y line*=left,
                grid=major,
            ]
            \draw[gray, densely dotted] (axis cs:5,0.8) -- (axis cs:5,60);
            \addplot[blue, thick, mark=*, mark size=1.4] coordinates {
                    (2,24.03) (3,25.08) (5,27.40) (8,28.55) (12,29.20) (20,29.76)
                    (50,30.59) (200,32.72) (1000,40.79)
                };
            \addplot[red, thick, mark=square*, mark size=1.2] coordinates {
                    (2,1.0249) (3,1.0009) (5,1.0033) (8,1.0081) (12,1.0121) (20,1.0167)
                    (50,1.0249) (200,1.0396) (1000,1.0616)
                };
        \end{axis}
        \begin{axis}[
                width=0.33\textwidth, height=5.4cm, scale only axis,
                xmode=log, ymode=log,
                xmin=0.7, xmax=1600,
                ymin=60, ymax=4e5,
                axis x line=none,
                axis y line*=right,
                ylabel={$\operatorname{cond}(\bm K)$},
            ]
            \addplot[blue, thick, dashed, mark=o, mark size=1.4] coordinates {
                    (2,1281) (3,131.9) (5,103.1) (8,119.3) (12,145.6) (20,203.9)
                    (50,450.6) (200,1765) (1000,8480)
                };
            \addplot[red, thick, dashed, mark=square, mark size=1.2] coordinates {
                    (2,10451) (3,6272) (5,6266) (8,6550) (12,7075) (20,8304)
                    (50,13570) (200,41777) (1000,1.931e5)
                };
        \end{axis}
    \end{tikzpicture}%
}
    \caption{Parameter sensitivity for the disc at refinement level~2: $L^2$ error (solid) and
        $\operatorname{cond}(\bm K)$ (dashed). Dotted lines mark the defaults; shaded ranges mark indefinite tangents.
        \emph{(a)}~Ghost penalty: $\operatorname{cond}(\bm K)$ is minimised near $\gamma_A=\mu_e$, while the error grows with $\gamma_A$;
        the coercivity threshold is degree dependent, and the $p=1$ curves terminate one point later than the $p=2$
        curves because the $p=1$ tangent is already indefinite at $\gamma_A=10^{-1}\mu_e$.
        \emph{(b)}~Nitsche penalty: $\operatorname{cond}(\bm K)$ is minimised at $m=5$, with the coercivity threshold between $m=1$ and
        $m=2$ and nearly constant error near the default.}
    \label{fig:penaltysensitivity}
\end{figure}

\section{Accuracy limit at unfitted Dirichlet--Neumann junctions}
\label{sec:mixedjunction}

The estimates of Section~\ref{sec:theory} assume sufficient regularity of the exact solution, concretely, the bounded,
non-degenerate deformation gradient that supplies the Lipschitz-tangent assumption~(J1) of Theorem~\ref{thm:brr}
(Assumption~\ref{ass:ellipticity}). This fails in a specific and practically relevant situation: mixed boundary
conditions, where the Dirichlet part $\Gamma_D$ and the Neumann part $\Gamma_N$ of $\partial\Omega$ meet at a junction
point $\bm{x}_0$, at which the singular expansion \eqref{eq:cornerexpansion} makes $|\Grad\bm{u}|$ unbounded and
assumption~(J1) inapplicable. As observed in the pole test case of Section~\ref{sec:pole}
(Figure~\ref{fig:poleconvergence}), the $L^2$ convergence rate drops to a reduced value, independently of the
polynomial degree $p$. In this section we show that this degradation is not a defect of the implementation but an
approximation-theoretic obstruction: no quadrature, stabilisation, penalty tuning, or alignment of the background mesh
with the junction can remove it on quasi-uniform meshes.

We first consider whether the boundary-condition type is assigned segment-wise on cut cells, effectively moving the
junction to the nearest cell boundary. Such an assignment would introduce a consistency error: the Dirichlet penalty
would act on a spurious arc $\gamma_h \subset \Gamma_N$ of length $O(h)$, producing, by a standard Strang-type
argument, a consistency error of order $h\,|\Grad\bm{u}(\bm{x}_0)|$ in the energy norm with no duality gain, hence a
first-order cap in $L^2$ for every $p$. In our implementation this mechanism is excluded by construction: the junction
is a sharp corner between two straight boundary segments, and the boundary integrals in the junction cell are split
\emph{exactly} at $\bm{x}_0$, so that the Nitsche terms are evaluated only on the $\Gamma_D$ portion of the boundary,
while the traction-free $\Gamma_N$ portion carries no boundary term. The discrete forms are therefore consistent, and
the observed loss of accuracy must have a different origin.

That origin is the regularity of the exact solution. Near a Dirichlet--Neumann junction, when the leading singular
exponent is real, the solution of the (linearised) elasticity problem admits the local Kondratiev-type expansion
\cite{grisvard1985elliptic, rossle2000corner}
\begin{equation}
    \label{eq:cornerexpansion}
    \bm{u}(r, \theta) = \Lambda \, r^{\lambda} \, \bm{\Phi}(\theta) + \bm{w},
    \qquad \bm{w} \in \big(H^{1+\lambda+\varepsilon}\big)^d \text{ locally, } \varepsilon > 0,
\end{equation}
Here $\bm\Phi(\theta)$ is the vector-valued angular eigenfunction associated with $\lambda$, with its normalisation
absorbed into the stress-intensity factor $\Lambda$. The polar coordinates are centred at $\bm{x}_0$, and the real
singular exponent $\lambda = \lambda(\omega, \nu) \in (0, 1]$ is the smallest positive root of the characteristic
equation of the Lam\'e operator with mixed boundary conditions at
the interior opening angle $\omega$ \cite{rossle2000corner}. Two cases deserve mention. If the junction lies on a
straight part of the boundary ($\omega = \pi$), the leading roots form a complex conjugate pair with
$\operatorname{Re}\lambda = 1/2$ regardless of the material parameters; the corresponding real mode and its phase are
described below. At a right-angle corner ($\omega = \pi/2$) the leading exponent is real, depends on $\nu$, and is in
general strictly smaller than for the Laplacian. In either case the regularity threshold is governed by
$\operatorname{Re}\lambda$: $\bm{u} \notin H^{1+\operatorname{Re}\lambda+\varepsilon}$ in any neighbourhood of
$\bm{x}_0$, and the generic stress intensity factor satisfies $\Lambda \neq 0$.

The following proposition turns this into an unconditional lower bound on the discretisation error. We state it in $d =
    2$, matching our experiments. We call the junction \emph{unresolved} when it is not resolved by the mesh in the sense
that it lies strictly inside a single active cell, at distance at least $\rho h$ from that cell's boundary, so that the
singularity \eqref{eq:cornerexpansion} must be reproduced by one polynomial on that cell; on a quasi-uniform background
mesh this is the generic situation, since the junction is placed by the geometry and not by the mesh.

\begin{proposition}[Lower bound at an unresolved junction]
    \label{prop:junctionlower}
    Let $\bm{u}$ satisfy \eqref{eq:cornerexpansion} with $\Lambda \neq 0$ and a real exponent $\lambda \notin
        \mathbb{N}$, $0 < \lambda < p$ (at a mixed junction $\lambda < 1$; at the clamped--clamped corner of
    Section~\ref{sec:nudependence}, $\lambda \in (1,2)$), and suppose the junction
    $\bm{x}_0$ lies in the interior of a cell $K^* \in T^h_\Omega$ with $\mathrm{dist}(\bm{x}_0, \partial K^*) \geq \rho h$
    for some fixed $\rho > 0$. Then there exist $c > 0$ and $h_1 > 0$, independent of $h$ and of the discrete solution,
    such that for all $h \leq h_1$,
    \begin{equation}
        \| \Grad (\bm{u} - \bm{u}_h) \|_{\Omega} \;\geq\; c \, |\Lambda| \, h^{\lambda}
        \qquad \text{for every } \bm{u}_h \in \mathbb{V}_h .
    \end{equation}
\end{proposition}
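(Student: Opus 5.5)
The plan is to localise to a single cell and reduce to a scaling argument on a reference configuration. Since $\bm u_h|_{K^*}$ is a polynomial in $[\mathcal Q_p]^d$ for every $\bm u_h\in\mathbb V_h$, it suffices to show
\[
\inf_{\bm q\in[\mathcal Q_p]^d}\|\Grad(\bm u-\bm q)\|_{K^*\cap\Omega}\;\ge\; c\,|\Lambda|\,h^{\lambda},
\]
because $\|\Grad(\bm u-\bm u_h)\|_{\Omega}\ge\|\Grad(\bm u-\bm u_h)\|_{K^*\cap\Omega}$. The lower bound will therefore hold for every discrete function, independently of Nitsche parameters, stabilisation or quadrature. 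To avoid the boundary of $K^*$, I will work on the ball $B_h:=B(\bm x_0,\rho h)\subset K^*$. For $h$ small, $B_h\cap\Omega$ is a circular sector of opening angle $\omega$, since near $\bm x_0$ the boundary consists of two straight segments, as in the pole, or is straight when $\omega=\pi$.

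The first key step treats the singular part. Rescale with $\bm X=\bm x_0+\rho h\,\hat{\bm x}$ onto the unit sector $\hat S=\{\hat r<1,\ 0<\theta<\omega\}$. Then $r^\lambda\bm\Phi(\theta)=(\rho h)^\lambda\hat r^\lambda\bm\Phi(\theta)$, and polynomials of degree $p$ map to polynomials of degree $p$. In two dimensions the gradient $L^2$ norm scales as $\|\Grad\bm v\|_{B_h\cap\Omega}=\|\hat\Grad\hat{\bm v}\|_{\hat S}$. This gives
\[
\inf_{\bm q}\|\Grad(\Lambda r^\lambda\bm\Phi-\bm q)\|_{B_h\cap\Omega}=|\Lambda|(\rho h)^\lambda\,\kappa,
\qquad
\kappa:=\inf_{\hat{\bm q}\in[\mathcal Q_p]^2}\|\hat\Grad(\hat r^\lambda\bm\Phi-\hat{\bm q})\|_{\hat S}.
\]
I then need $\kappa>0$. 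The infimum is a distance to a finite-dimensional subspace, so it is attained, and $\kappa=0$ would force $\hat r^\lambda\bm\Phi$ to equal a polynomial up to a constant on $\hat S$. That is impossible for $\lambda\notin\mathbb N$ with $\bm\Phi\not\equiv0$. Indeed, along each ray $\theta=\theta_0$ with $\bm\Phi(\theta_0)\neq0$ the function is $t^\lambda\bm\Phi(\theta_0)$, which is not a polynomial in $t$. The constant $\kappa$ depends only on $\omega,\nu,p$.

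The second key step handles the regular remainder $\bm w$. By the triangle inequality,
\[
\|\Grad(\bm u-\bm q)\|_{B_h\cap\Omega}\ge\|\Grad(\Lambda r^\lambda\bm\Phi-\bm q)\|-\|\Grad(\bm w-\bm w_p)\|-\|\Grad\bm w_p\|_{\dots},
\]
but the cleaner route is to absorb a polynomial approximation of $\bm w$ into $\bm q$. Let $\bm w_p$ be a local averaged Taylor polynomial of $\bm w$ on $B_h\cap\Omega$, which is a star-shaped sector and so qualifies for Bramble--Hilbert. Since $\lambda<p$ and $\bm w\in H^{1+\lambda+\varepsilon}$ locally, a scaled Bramble--Hilbert estimate gives $\|\Grad(\bm w-\bm w_p)\|_{B_h\cap\Omega}\le C h^{\lambda+\varepsilon'}$ with $\varepsilon'=\min(\varepsilon,p-\lambda)>0$. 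Since $\bm q-\bm w_p$ ranges over all of $[\mathcal Q_p]^2$ as $\bm q$ does, this yields
\[
\inf_{\bm q}\|\Grad(\bm u-\bm q)\|_{K^*\cap\Omega}\ge|\Lambda|\kappa\rho^\lambda h^\lambda-Ch^{\lambda+\varepsilon'}\ge \tfrac12|\Lambda|\kappa\rho^\lambda h^\lambda
\]
for $h\le h_1$. Here $h_1$ depends on $\Lambda$, on the local norm of $\bm w$, and on $\varepsilon'$, but not on $\bm u_h$.

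The main obstacle is the scaling step for the fractional-order remainder. Its seminorm $|\bm w|_{H^{1+\lambda+\varepsilon}}$ is of Slobodeckij type, so I need a Bramble--Hilbert lemma in fractional Sobolev spaces on sectors with a scaling constant uniform in $h$. Because every $B_h\cap\Omega$ is an exact dilate of the same reference sector $\hat S$, uniformity follows from applying the lemma once on $\hat S$ and rescaling. The one care point is that $\lambda+\varepsilon$ may exceed $p$, in which case I simply lower $\varepsilon$ to stay below $p$.

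A secondary point is the case of a complex-conjugate leading pair at $\omega=\pi$. The statement assumes a real exponent, so I will only remark that the same argument applies to the real mode $\operatorname{Re}(r^\lambda\bm\Phi)$, with the scaled factor $h^{\operatorname{Re}\lambda}$ multiplied by a bounded oscillating phase factor $r^{i\operatorname{Im}\lambda}$.
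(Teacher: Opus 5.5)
Your proposal is correct and follows the paper's proof essentially step for step. Both arguments restrict to the ball of radius $\rho h$ about the junction inside $K^*$. Both use the translation--dilation invariance of the sector and of $[\mathcal Q_p]^2$ to obtain a fixed positive constant times $|\Lambda| h^\lambda$ for the singular part. Both absorb a Bramble--Hilbert polynomial approximation of the $H^{1+\lambda+\varepsilon}$ remainder, with $\lambda+\varepsilon\le p$, into the competitor. Your justification of $\kappa>0$, via attainment of the infimum and the ray argument, is somewhat more explicit than the paper's.

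Only your closing side remark on complex exponents is loose. The phase $h^{i\operatorname{Im}\lambda}$ does not factor out of the real mode, so the scaled constant depends on $h$. To keep it uniformly positive you need, as in the paper's remark, the infimum over all phases on the compact circle.
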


\begin{proof}
    Let $B := B_{\rho h}(\bm{x}_0) \subset K^*$. The restriction of $\bm{u}_h$ to $K^*$ belongs to the local polynomial
    space $[\mathcal{Q}_p]^2$, the tensor-product space of the $\mathbb{Q}_p$ elements used in this work; for
    simplicial $\mathcal{P}_p$ elements the argument below is identical, so
    \begin{equation*}
        \| \Grad (\bm{u} - \bm{u}_h) \|_{\Omega} \geq \| \Grad (\bm{u} - \bm{u}_h) \|_{B \cap \Omega}
        \geq \inf_{\bm{q} \in [\mathcal{Q}_p]^2} \| \Grad (\bm{u} - \bm{q}) \|_{B \cap \Omega} .
    \end{equation*}
    Because the two boundary segments meeting at $\bm{x}_0$ are straight, $\Omega$ coincides near $\bm{x}_0$ with a cone
    of opening $\omega$, which is invariant under the scaling $\bm{x} = \bm{x}_0 + h \hat{\bm{x}}$. Set
    \begin{equation*}
        d_0 := \inf_{\bm{q} \in [\mathcal{Q}_p]^2} \big\| \Grad \big( r^\lambda \bm{\Phi} - \bm{q} \big) \big\|_{B_\rho(0) \cap \widehat{\Omega}} > 0 ,
    \end{equation*}
    which is strictly positive since $r^\lambda \bm{\Phi}$ with $\lambda \notin \mathbb{N}$ is not a polynomial, and is a
    fixed constant independent of $h$. Under the scaling, $\| \Grad v \|_{B_{\rho h} \cap \Omega} = \|
        \widehat{\Grad} \hat{v} \|_{B_\rho \cap \widehat{\Omega}}$ in $d = 2$, and the singular term transforms as
    $\widehat{r^\lambda \bm{\Phi}} = h^\lambda \, \hat{r}^\lambda \bm{\Phi}$, while the translation--dilation maps
    $[\mathcal{Q}_p]^2$ onto itself. Hence
    \begin{equation*}
        \inf_{\bm{q}} \| \Grad ( \Lambda r^\lambda \bm{\Phi} - \bm{q} ) \|_{B \cap \Omega} = |\Lambda| \, d_0 \, h^{\lambda} .
    \end{equation*}
    For the remainder, the Bramble--Hilbert lemma on the scaled domain gives
    $$\inf_{\bm{q}} \| \Grad (\bm{w} - \bm{q}) \|_{B \cap \Omega} \leq C h^{\lambda + \varepsilon} \, |\bm{w}|_{H^{1+\lambda+\varepsilon}(B \cap \Omega)},$$
    the bound over $[\mathcal{P}_p]^2$ holding a fortiori over the larger space $[\mathcal{Q}_p]^2$ (here $\lambda +
        \varepsilon \leq p$ is used).
    The triangle inequality then yields
    \begin{equation*}
        \inf_{\bm{q}} \| \Grad (\bm{u} - \bm{q}) \|_{B \cap \Omega}
        \geq |\Lambda| \, d_0 \, h^{\lambda} - C h^{\lambda + \varepsilon}
        \geq \tfrac{1}{2} |\Lambda| \, d_0 \, h^{\lambda}
    \end{equation*}
    for all $h \leq h_1$ with $h_1$ small enough.
\end{proof}

The same scaling argument yields an unconditional lower bound in $L^2$ as well, with a different exponent.

\begin{corollary}[$L^2$ lower bound at an unresolved junction]
    \label{cor:junctionlowerL2}
    Under the assumptions of Proposition~\ref{prop:junctionlower}, there exist $c > 0$ and $h_1 > 0$, independent of
    $h$ and of the discrete solution, such that for all $h \leq h_1$,
    \begin{equation}
        \| \bm{u} - \bm{u}_h \|_{L^2(\Omega)} \;\geq\; c \, |\Lambda| \, h^{1+\lambda}
        \qquad \text{for every } \bm{u}_h \in \mathbb{V}_h .
    \end{equation}
\end{corollary}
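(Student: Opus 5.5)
We repeat the scaling argument of Proposition~\ref{prop:junctionlower}, this time in $L^2$. The restriction of $\bm u_h$ to the junction cell $K^*$ lies in $[\mathcal Q_p]^2$, and $B:=B_{\rho h}(\bm x_0)\subset K^*$. Hence
\begin{equation*}
\|\bm u-\bm u_h\|_{L^2(\Omega)}\ \ge\ \inf_{\bm q\in[\mathcal Q_p]^2}\|\bm u-\bm q\|_{B\cap\Omega}.
\end{equation*}
This reduces the claim to a local best-approximation problem that no longer involves the discrete solution.

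Next we rescale by $\bm x=\bm x_0+h\hat{\bm x}$. Near $\bm x_0$ the domain $\Omega$ is a cone, which is invariant under this map, so $B\cap\Omega$ maps to the fixed set $\widehat D:=B_\rho(0)\cap\widehat\Omega$. In $d=2$ the $L^2$ norm scales as $\|v\|_{B\cap\Omega}=h\,\|\hat v\|_{\widehat D}$, the singular term becomes $h^\lambda\hat r^\lambda\bm\Phi$, and the dilation maps $[\mathcal Q_p]^2$ onto itself. We set
\begin{equation*}
d_1:=\inf_{\bm q\in[\mathcal Q_p]^2}\|\hat r^\lambda\bm\Phi-\bm q\|_{\widehat D}.
\end{equation*}
This infimum is attained because the polynomial space is finite-dimensional. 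It is strictly positive because $\lambda\notin\mathbb N$, so $\hat r^\lambda\bm\Phi$ is not a polynomial on the open set $\widehat D$. It follows that
\begin{equation*}
\inf_{\bm q}\|\Lambda r^\lambda\bm\Phi-\bm q\|_{B\cap\Omega}=|\Lambda|\,d_1\,h^{1+\lambda}.
\end{equation*}

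For the regular remainder $\bm w\in H^{1+\lambda+\varepsilon}$ we use Bramble--Hilbert on the scaled domain, now in $L^2$. With $\lambda+\varepsilon\le p$, polynomials of degree at most $p$ reproduce $\bm w$ up to
\begin{equation*}
\inf_{\bm q}\|\bm w-\bm q\|_{B\cap\Omega}\le C\,h^{1+\lambda+\varepsilon}|\bm w|_{H^{1+\lambda+\varepsilon}(B\cap\Omega)},
\end{equation*}
where the exponent is $1+\lambda+\varepsilon$ since $s=1+\lambda+\varepsilon\le p+1$. By the triangle inequality, the best approximation of $\bm u$ is at least $|\Lambda|d_1h^{1+\lambda}-Ch^{1+\lambda+\varepsilon}$. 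This exceeds $\tfrac12|\Lambda|d_1h^{1+\lambda}$ for $h\le h_1$, which gives the claim with $c=d_1/2$.

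The main point needing care is $d_1>0$ together with its independence of $h$. Both follow from the exact cone invariance of the domain under the scaling, as in the proposition, and from the fact that the seminorm of $\bm w$ on a shrinking ball remains bounded. A further subtle point is that the Bramble--Hilbert bound needs $\widehat D$ to be a fixed Lipschitz domain, which a cone sector intersected with a ball is. Fractional Sobolev orders are handled by the standard fractional Bramble--Hilbert lemma.
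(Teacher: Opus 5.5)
Your proposal is correct and follows the paper's own proof essentially verbatim. You localise to $B_{\rho h}(\bm x_0)\subset K^*$ and rescale using the cone invariance, so that the $L^2$ norm picks up the factor $h$ and the singular part contributes $|\Lambda|\,d_1\,h^{1+\lambda}$. You then bound the remainder at order $h^{1+\lambda+\varepsilon}$ by the (fractional) Bramble--Hilbert lemma and conclude with the triangle inequality. Your extra justifications (why $d_1>0$ and is independent of $h$, and why the remainder's seminorm stays bounded on the shrinking ball) simply make explicit what the paper leaves implicit.
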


\begin{proof}
    The proof of Proposition~\ref{prop:junctionlower} applies verbatim with $\| \Grad (\cdot) \|_{B \cap \Omega}$
    replaced by $\| \cdot \|_{B \cap \Omega}$: in $d = 2$ the scaling $\bm{x} = \bm{x}_0 + h \hat{\bm{x}}$ gives
    $\| v \|_{B_{\rho h} \cap \Omega} = h \, \| \hat{v} \|_{B_\rho \cap \widehat{\Omega}}$, so the singular term
    contributes $|\Lambda| \, d_0' \, h^{1+\lambda}$ with $d_0' := \inf_{\bm{q} \in [\mathcal{Q}_p]^2} \| r^\lambda
        \bm{\Phi} - \bm{q} \|_{B_\rho(0) \cap \widehat{\Omega}} > 0$, while the Bramble--Hilbert bound for the remainder
    becomes $C h^{1+\lambda+\varepsilon}$.
\end{proof}

\paragraph{Remark (complex exponents)} At a straight mixed junction the leading exponent is the complex pair $\lambda = \tfrac12 \pm i\varepsilon$ of
Section~\ref{sec:nudependence}, and the real singular mode is $r^{1/2}\big[ \cos(\varepsilon\ln r)\,\bm\Phi_1(\theta) +
        \sin(\varepsilon\ln r)\,\bm\Phi_2(\theta)\big]$. Under the scaling $\bm x = \bm x_0 + h\hat{\bm x}$ this mode is
reproduced only up to the phase shift $\varepsilon\ln h$, so the scaled infimum is no longer the single constant $d_0$.
Redefining
\begin{equation*}
    d_0 := \inf_{\varphi \in [0,2\pi]} \; \inf_{\bm{q} \in [\mathcal{Q}_p]^2}
    \big\| \Grad \big( \hat r^{1/2} [\cos(\varepsilon\ln\hat r + \varphi)\,\bm\Phi_1 + \sin(\varepsilon\ln\hat r + \varphi)\,\bm\Phi_2] - \bm{q} \big) \big\|_{B_\rho(0) \cap \widehat\Omega} ,
\end{equation*}
which is positive as the minimum of a positive continuous function of $\varphi$ over the compact phase circle (no
phase renders the mode polynomial), the proofs of Proposition~\ref{prop:junctionlower} and
Corollary~\ref{cor:junctionlowerL2} carry over verbatim with $\lambda$ replaced by $\operatorname{Re}\lambda =
    \tfrac12$ in the rates.

Since $\lambda < 1$ at a mixed junction, the exponent satisfies $1 + \lambda < 2 \leq p + 1$: no polynomial degree
attains the optimal $L^2$ rate at an unresolved junction. At a clamped--clamped corner, where $\lambda \in (1,2)$, the
same bound $1+\lambda < 3 \leq p+1$ rules out the optimal rate for every $p \geq 2$.

Several comments are in order. First, both bounds hold for \emph{every} discrete function, not merely the CutFEM
solution: they are statements about the richness of the space $\mathbb{V}_h$, and are therefore independent of the
Nitsche parameter, the ghost penalty, and the quadrature. This is consistent with the exact junction quadrature
described above. Second, the rate actually observed is sharper than the proved $L^2$ lower bound: combined with
energy-norm quasi-optimality (which for the linearised (linear-elastic) problem follows from
Proposition~\ref{prop:coercivity} and consistency via C\'ea's lemma, with no regularity hypothesis; the hypotheses of
Theorem~\ref{thm:brr} are unavailable here, since $\bm u \notin W^{1,\infty}$ at the junction), the energy-norm rate at
an unresolved junction is exactly $\min(p, \lambda)$, and since the dual problem carries the same corner singularity,
the usual Aubin--Nitsche argument gains at most an additional factor $h^{\min(1,\lambda)}$, predicting the $L^2$ rate
cap
\begin{equation}
    \label{eq:l2cap}
    \| \bm{u} - \bm{u}_h \|_{L^2(\Omega)} \lesssim h^{\min\left(p+1, \; \min(p,\lambda) + \min(1,\lambda)\right)}
\end{equation}
\cite{grisvard1985elliptic}, where for a complex leading exponent $\lambda$ is to be read as $\operatorname{Re}\lambda$
here and throughout. At a mixed junction, where $\lambda < 1$, the second argument reduces to $2\lambda$,
independently of the polynomial degree; once $\lambda > 1$ the duality gain saturates at one full order and the cap
becomes $\min(p+1, 1+\lambda)$. We emphasise the asymmetry in status: \eqref{eq:l2cap}
is an upper bound obtained by a duality argument, and unlike Proposition~\ref{prop:junctionlower} and
Corollary~\ref{cor:junctionlowerL2} it does not by itself preclude a better rate; it is, however, consistent with the
corollary (since $\min(p,\lambda) + \min(1,\lambda) \leq 1 + \lambda$, with equality for $1 \leq \lambda \leq p$), and the
experiments below, in particular the
$\nu$-sweep of Figure~\ref{fig:nusweep}, show that it is attained sharply. For a junction located on a straight boundary
segment, $2\operatorname{Re}\lambda = 1$; at the right-angle corners of the pole test case the cap is $2\lambda(\pi/2, \nu)$ in the mixed
setting, in agreement with Figure~\ref{fig:poleconvergence}. Third, we emphasise that aligning the junction with a vertex of the
background mesh does \emph{not} restore the rate: while the lower-bound argument of
Proposition~\ref{prop:junctionlower} is stated for a junction interior to a cell, the attainable rate is governed by
the global regularity $\bm{u} \in H^{1+\lambda-\epsilon}$ near $\bm{x}_0$, and the best approximation of $r^\lambda
    \bm{\Phi}$ on a quasi-uniform mesh is $O(h^\lambda)$ in the energy norm regardless of where the cell boundaries
lie.\footnote{This regularity-limited rate is classical and method-independent: the same cap is attained by a
    body-fitted discretisation on a quasi-uniform mesh, since it is a property of the best approximation of
    $r^\lambda\bm\Phi$ by the local polynomial space, not of how the boundary is meshed \cite{grisvard1985elliptic,
        babuska1987optimal, schatz1979maximum}. We therefore do not include a separate body-fitted comparison; the
    quantitative match of the measured rates to the predicted exponent across the $\nu$-sweep
    (Figure~\ref{fig:nusweep}) is the sharper confirmation that the observed cap is the intrinsic one.} A
derivative kink admitted by the piecewise polynomial space at a vertex can capture an exact $r^1$-type singular term,
i.e. the case $\lambda = 1$, but no $r^\lambda$ with $\lambda < 1$. We tested this remedy directly: snapping the
background mesh so that a vertex coincides with the junction $\bm{x}_0$ produces errors and convergence rates
essentially identical to the generic, non-aligned configuration. As the two curves are visually indistinguishable we do
not report a separate plot.

Restoring optimal rates at an unfitted junction requires local mesh grading towards the corners, which we analyse in
Section~\ref{sec:remedy}. The mixed-boundary-condition results of Section~\ref{sec:pole} thus exhibit precisely the
capped rate \eqref{eq:l2cap}, floored by the lower bounds of Proposition~\ref{prop:junctionlower} and
Corollary~\ref{cor:junctionlowerL2}, while the all-Dirichlet disc test case of Section~\ref{sec:disc}, free of corners
and junctions, attains the rates of Theorem~\ref{thm:brr}.

\subsection{Dependence of the singular exponent on the material parameters}
\label{sec:nudependence}

The dependence of the cap \eqref{eq:l2cap} on the Poisson ratio enters through $\lambda(\omega,\nu)$ and differs
between straight and right-angle junctions.

If the junction lies on a straight part of the boundary ($\omega = \pi$), the answer is negative for every material:
the mixed Dirichlet--Neumann edge exponent of the Lam\'e operator is the classical punch-problem singularity
\cite{grisvard1985elliptic, rossle2000corner}
\begin{equation}
    \lambda = \tfrac{1}{2} \pm i \varepsilon, \qquad \varepsilon = \frac{1}{2\pi} \ln (3 - 4\nu),
\end{equation}
so $\nu$ affects only the oscillatory part $\varepsilon$, while $\operatorname{Re}\lambda=1/2$ and the resulting
first-order $L^2$ cap are material-independent.

At the right-angle corner of the pole geometry ($\omega = \pi/2$), the exponent does depend on $\nu$. For linear
elasticity the characteristic equation can be derived in closed form. Representing the eigenfunction through the
Kolosov--Muskhelishvili potentials $\varphi(z) = a z^\lambda$, $\psi(z) = b z^\lambda$, the displacement and traction
resultants on a ray $\theta = \mathrm{const}$ of the wedge $0 \leq \theta \leq \omega$ are
\begin{equation*}
    2\mu_e\, \bm{u} \sim r^\lambda \big[ \varkappa a e^{i\lambda\theta} - \lambda \bar{a} e^{i(2-\lambda)\theta} - \bar{b} e^{-i\lambda\theta} \big],
    \qquad
    \bm{T} \sim r^\lambda \big[ a e^{i\lambda\theta} + \lambda \bar{a} e^{i(2-\lambda)\theta} + \bar{b} e^{-i\lambda\theta} \big],
\end{equation*}
with the Kolosov constant $\varkappa = 3 - 4\nu$ (plane strain). Imposing the clamped condition $\bm{u} = \bm{0}$ at
$\theta = 0$ and the traction-free condition $\bm{T} = \bm{0}$ at $\theta = \omega$ yields a homogeneous $2 \times 2$
real system for $a \in \mathbb{C}$, whose determinant vanishes if and only if
\begin{equation}
    \label{eq:charequation}
    (\varkappa+1)^2 \cos^2(\lambda\omega) + (\varkappa-1)^2 \sin^2(\lambda\omega) = 4 \lambda^2 \sin^2\omega ,
    \qquad \text{equivalently} \quad
    \sin^2(\lambda\omega) = \frac{(\varkappa+1)^2 - 4\lambda^2 \sin^2\omega}{4\varkappa} .
\end{equation}
For $\omega=\pi$, \eqref{eq:charequation} recovers the punch singularity above
($e^{2\pi\varepsilon}=\varkappa$). For $\omega=\pi/2$, the smallest root is real and decreases monotonically from
$\lambda=1$ at $\nu=0$ to $0.5946\ldots$ in the incompressible limit. Hence the mixed-corner cap
$\min(p+1,2\lambda)$ can reach order two only at $\nu=0$: it may be optimal for $p=1$, but never for higher-order
elements.

The picture changes qualitatively when the two boundary segments meeting at the corner carry the \emph{same}
(Dirichlet) condition, as at the bottom corners of the pure-Dirichlet pole of Section~\ref{sec:pole}. Imposing $\bm{u}
    = \bm{0}$ on both faces $\theta = 0$ and $\theta = \omega$ of the wedge, the same Kolosov--Muskhelishvili calculation
eliminates $\psi$ through $\bar{b} = \varkappa a - \lambda \bar{a}$ and collapses the two clamped conditions onto the
single complex relation $\varkappa a \sin(\lambda\omega) = \lambda \bar{a}\, e^{i(1-\lambda)\omega} \sin\omega$; the
vanishing of the associated $2 \times 2$ real determinant gives the clamped--clamped characteristic equation
\begin{equation}
    \label{eq:charDD}
    \varkappa^2 \sin^2(\lambda\omega) = \lambda^2 \sin^2\omega .
\end{equation}
For $\omega=\pi$, \eqref{eq:charDD} has only integer roots, as expected for a regular point within a single Dirichlet
segment; the straight-boundary punch singularity is therefore exclusive to mixed conditions. For $\omega=\pi/2$,
$\varkappa\sin(\lambda\pi/2)=\lambda$, and the smallest root ranges from $\lambda\to1$ in the incompressible limit to
$\lambda\approx1.63$ at $\nu=0$. Thus a clamped--clamped right-angle corner has more than $H^2$ regularity, unlike the
mixed corner where $\lambda<1$.

Because $\lambda>1$, the $L^2$ cap is $\min(p+1,1+\lambda)$ with $1+\lambda\in(2,2.63)$: $p=1$ remains optimal, whereas
every $p\geq2$ is capped below order three. For example, $\nu\approx0.43$ gives $\lambda\approx1.20$ and the observed
cap $1+\lambda\approx2.2$. This exponent matches the unconditional lower bound of Corollary~\ref{cor:junctionlowerL2},
which applies here since $\lambda\in(1,2)$ is not an integer, so the clamped--clamped rate is sharp. Optimal
higher-order rates require the local grading of Section~\ref{sec:remedy}.

Figure~\ref{fig:nusweep} compares the measured $L^2$ rates with $2\lambda(\pi/2,\nu)$ for the mixed corner and
$1+\lambda(\pi/2,\nu)$ for the clamped--clamped corner.
The prediction is authoritative for linear elasticity; agreement of the neo-Hookean data only indicates that the linear
exponent governs the loads considered, while the nonlinear near-tip field remains open (Section~\ref{sec:remedy}). To
reduce coarse-level oscillations, particularly at small $\nu$, the measured rates are least-squares slopes of
$\log(\text{error})$ against $\log h$ over the last three levels.
The figure also includes a disc clamped on its lower half and traction-free on its upper half, with straight mixed
junctions at $(\pm R,0)$ integrated exactly. The predicted cap is $2\operatorname{Re}\lambda=1$, independent of $\nu$
and $p$; across $\nu\in[0.1,0.45]$, the measured rates are $0.93$--$0.97$ for $p=3$ and $0.90$--$0.96$ for $p=2$ in
both materials.
\begin{figure}[h]
    \centering
    \begin{tikzpicture}
        \begin{axis}[
                width=0.65\textwidth, height=6cm,
                xlabel={Poisson ratio $\nu$},
                ylabel={$L^2$ convergence rate cap},
                xmin=0, xmax=0.5, ymin=0.75, ymax=4.2,
                legend pos=north east,
                grid=major,
            ]
            \addplot[blue, thick] coordinates {
                    (0.00, 2.0000)
                    (0.05, 1.8452)
                    (0.10, 1.7341)
                    (0.15, 1.6424)
                    (0.20, 1.5621)
                    (0.25, 1.4895)
                    (0.30, 1.4223)
                    (0.35, 1.3594)
                    (0.40, 1.2998)
                    (0.45, 1.2432)
                    (0.49, 1.1998)
                };
            \addlegendentry{Dirichlet--Neumann (clamped--free)}
            \addplot[red, thick, dashed] coordinates {
                    (0.00, 2.6335)
                    (0.05, 2.6100)
                    (0.10, 2.5832)
                    (0.15, 2.5523)
                    (0.20, 2.5160)
                    (0.25, 2.4730)
                    (0.30, 2.4209)
                    (0.35, 2.3561)
                    (0.40, 2.2731)
                    (0.45, 2.1616)
                    (0.49, 2.0382)
                };
            \addlegendentry{Dirichlet--Dirichlet (clamped--clamped)}
            \addplot[gray, ultra thick] coordinates {(0.00, 1.0) (0.5, 1.0)};
            \addlegendentry{straight junction ($\omega = \pi$), Dirichlet--Neumann}
            \addplot[blue, only marks, mark=*] coordinates {
                    (0.10, 1.548) (0.25, 1.595) (0.35, 1.530) (0.45, 1.422)
                };
            \addplot[blue, only marks, mark=o] coordinates {
                    (0.10, 1.634) (0.25, 1.516) (0.35, 1.478) (0.45, 1.379)
                };
            \addplot[red, only marks, mark=square*] coordinates {
                    (0.10, 2.639) (0.25, 2.440) (0.35, 2.278) (0.45, 2.077)
                };
            \addplot[red, only marks, mark=square] coordinates {
                    (0.10, 2.684) (0.25, 2.549) (0.35, 2.401) (0.45, 2.115)
                };
            \addplot[black, only marks, mark=triangle*] coordinates {
                    (0.10, 0.929) (0.25, 0.942) (0.35, 0.956) (0.45, 0.970)
                };
            \addplot[black, only marks, mark=triangle] coordinates {
                    (0.10, 0.926) (0.25, 0.926) (0.35, 0.932) (0.45, 0.953)
                };
        \end{axis}
    \end{tikzpicture}
    \caption{Predicted and measured plane-strain $L^2$ rates. Curves show the right-angle caps
        $2\lambda(\pi/2,\nu)$ for mixed conditions and $1+\lambda(\pi/2,\nu)$ for pure Dirichlet conditions;
        the thick gray line is the straight-junction cap $2\operatorname{Re}\lambda=1$. Pole rates use $p=3$ and
        least-squares fits over the last three levels: circles~=~mixed, squares~=~pure Dirichlet; filled
        markers~=~neo-Hookean, open markers~=~linear elasticity. Black triangles are the straight mixed junction.
        The right-angle data broadly follow their respective caps, while the straight-junction rates are nearly independent
        of $\nu$ and approach one from below.}
    \label{fig:nusweep}
\end{figure}

\section{Restoring optimal rates by local mesh grading/adaptivity}
\label{sec:remedy}

Proposition~\ref{prop:junctionlower} and Corollary~\ref{cor:junctionlowerL2} show that the rate cap \eqref{eq:l2cap} is
an \emph{approximation} obstruction: on a quasi-uniform mesh the space $\mathbb V_h$ cannot represent
$r^\lambda\bm\Phi$ with $\lambda\notin\mathbb N$ to better than $O(h^\lambda)$ in energy, regardless of where the cell
boundaries fall (Section~\ref{sec:mixedjunction}). The classical remedy is to refine the mesh locally near the
junction, a priori, by grading, or a posteriori, by adaptivity, so that the elements shrink fast enough to compensate
for the lost regularity of $r^\lambda\bm\Phi$. For body-fitted discretisations this is classical theory: the singular
exponents, the correct grading parameter, and the resulting optimal rates are all known \cite{babuska1979direct,
    apel1999anisotropic, grisvard1985elliptic}, and adaptive refinement attains the same rate per degree of freedom without
prior knowledge of the exponent.

Our purpose is not to re-derive that theory but to show that \emph{it transfers to the unfitted setting}: the rate
a matching mesh achieves with a given local refinement, the CutFEM discretisation on the \emph{same background
    refinement} inherits, with constants independent of how $\partial\Omega$ cuts the mesh. This is the same
cut-independence already established for stability and conditioning in Section~\ref{sec:theory}
(Proposition~\ref{prop:coercivity}, Corollary~\ref{cor:conditioning}), now at the level of approximation. The
corner-specific analysis, the exponents, the right amount of refinement, and, at finite strain, the genuinely nonlinear
near-tip field are thereby isolated as matching-mesh questions: classical for linear elasticity, partly open at finite
strain, but in either case an \emph{input} to the result below rather than an obstruction introduced by the cut.

\subsection{Local-size discretisation}
\label{sec:gradinglinear}

We instantiate the discretisation of Section~\ref{sec:theory} with the local mesh-size field in place of the global
parameter: the Nitsche penalty is taken as $\gamma/h_T$, the boundary term as $\sum_T\|h_T^{-1/2}\bm
    v\|^2_{\partial\Omega\cap T}$, and the ghost penalty with the local face size $h_F$, and we write $|||\cdot|||_g$ for
the resulting mesh-dependent norm. On a quasi-uniform mesh $h_T\sim h_F\sim h$, so $|||\cdot|||_g$ reduces to the
global triple norm \eqref{eq:triplenorm} and Section~\ref{sec:theory} is the special case of uniform refinement.

A concrete refinement realising a prescribed local size is radial grading towards the junction $\bm x_0$ with parameter
$\mu\in(0,1]$: with $h_T=\operatorname{diam} T$ and $r_T=\operatorname{dist}(T,\bm x_0)$,
\begin{equation}
    \label{eq:grading}
    h_T \ \simeq\
    \begin{cases}
        h^{1/\mu},         & r_T \le h^{1/\mu} \quad(\text{the core patch around }\bm x_0), \\[2pt]
        h\, r_T^{\,1-\mu}, & r_T > h^{1/\mu},
    \end{cases}
\end{equation}
the standard radial construction \cite{babuska1979direct, apel1999anisotropic}, with uniform shape-regularity and a
bounded neighbour-size ratio; here $\mu=1$ is quasi-uniform and smaller $\mu$ grades more strongly. On a graded mesh
$h$ is no longer the diameter of every element, as it was in the quasi-uniform setting of
Sections~\ref{sec:theory}--\ref{sec:mixedjunction}; in \eqref{eq:grading} and throughout this section it is a global
refinement parameter, namely the size away from the junction ($h_T\simeq h$ for $r_T\simeq 1$), from which the local
sizes $h_T$ are prescribed. All rates below are rates in this parameter. A direct count
gives $\#\mathcal T_h=O(h^{-2})$ elements for every $\mu\in(0,1]$, so the rate in $h$ coincides with the rate per
degree of freedom. The reduction below is, however, agnostic to how the local sizes are produced, by
\eqref{eq:grading} or by adaptive refinement.

The stability analysis of Section~\ref{sec:theory} was carried out for a global $h$ (the coercivity estimate and the
discrete inverse/trace lemma assume quasi-uniformity), so its transfer to $|||\cdot|||_g$ is not automatic; we isolate
it, together with the matching-mesh approximability that we import, as two assumptions. The first of these is in fact
provable under mild geometric hypotheses on the refinement; we state it as an assumption here to keep the exposition
self-contained and defer the proof, a local-size generalisation of Proposition~\ref{prop:coercivity} and
Theorem~\ref{thm:brr}, to \ref{app:gradedstab}.

\begin{assumption}[Local-size CutFEM stability]
    \label{ass:gradedstab}
    On the refined family $\{\mathcal T_h\}$, the local-size cut trace inequality, the ghost-penalty extension
    property, the Nitsche coercivity of $A_h$ in $|||\cdot|||_g$, and the local interpolation estimates hold with the
    local sizes $h_T,h_F$ and constants independent of how $\partial\Omega$ cuts the mesh and of the refinement level.
    These properties imply the linear quasi-optimality estimate in $|||\cdot|||_g$ and provide the uniform-coercivity
    ingredient for the nonlinear result. The latter additionally requires the local-size differentiability and
    Jacobian-defect hypotheses stated in Theorem~\ref{thm:gradedqo}.
\end{assumption}

This holds under shape-regularity, a bounded neighbour-size ratio, and a bounded active-layer width, all met by the
radial grading \eqref{eq:grading} and by graded-conforming adaptivity; the corresponding geometric stability statements
are proved in \ref{app:gradedstab} (Proposition~\ref{prop:gradedcoercivity}). They provide the linear result directly
and the coercivity input to the conditional nonlinear Theorem~\ref{thm:gradedqo}. This is plausible because the
ingredients of Section~\ref{sec:theory} are elementwise and local: within each refinement region the mesh is
quasi-uniform and the bounded neighbour-ratio controls the transitions. The one ingredient deserving comment is the
ghost penalty, which couples cut cells to neighbours reaching into the fictitious domain. Here the unfitted geometry is
in fact benign: the singular mode $r^\lambda\bm\Phi$ continues analytically in $\theta$ into the fictitious sector (its
angular profile is built from $e^{i\lambda\theta}$-type terms) and is smooth away from $\bm x_0$, with the same
$r^{\lambda-k}$ derivative growth. The extension of the solution across $\partial\Omega$ needed below is therefore
intrinsic rather than constructed. Because it is tied to the geometric corner, it is independent of how the level set
cuts the cells. This is precisely what lets the matching-mesh approximability of Assumption~\ref{ass:matchapprox}
survive the passage to the cut space.

\begin{assumption}[Matching-mesh approximability]
    \label{ass:matchapprox}
    There is a stable extension $E\bm u\in \big(H^1(\Omega_T)\big)^d$ of the solution, with $E\bm u=\bm u$ on $\Omega$,
    and a rate $R(h)$ such that the standard conforming interpolant $I_h$ on the background family $\{\mathcal T_h\}$
    satisfies, with the interpolation error $\bm e_h := E\bm u-I_h E\bm u$,
    \begin{equation}
        \label{eq:matchrate}
        |||\,\bm e_h\,|||_g \le C\,R(h),
        \qquad
        \|\bm e_h\|_{L^2(\Omega_T)} \le C\,h\,R(h),
    \end{equation}
    with $C$ independent of $h$. The first bound is posed in the full local-size norm \eqref{eq:gradednorm}: bulk
    gradient, Nitsche boundary term $\sum_T\|h_T^{-1/2}\bm e_h\|^2_{\partial\Omega\cap T}$ \emph{and} ghost-penalty
    seminorm; since the interior-face jumps of $E\bm u$ vanish, the ghost term is that of $I_h E\bm u$ alone. In
    addition, the same approximability is assumed for the dual problem: for every
    $\bm g \in (L^2(\Omega))^d$, the solution $\bm\varphi_{\bm g}$ of the linearised adjoint problem with data $\bm g$
    admits a stable extension with
    $|||\,E\bm\varphi_{\bm g}-I_h E\bm\varphi_{\bm g}\,|||_g \le C\,h\,\|\bm g\|_{L^2(\Omega)}$.
\end{assumption}

The bulk-gradient part of \eqref{eq:matchrate} is nothing but the body-fitted approximation result, applied to the
extended field on the active mesh; the extension exists with the same approximability by the analytic continuation
noted above. We stress that the Nitsche and ghost-penalty parts of the local-size norm do \emph{not} follow from the
two global bulk bounds alone: on a graded family $h_T \ll h$ precisely in the boundary cells the grading targets, so
the locally weighted sums must be controlled with their local weights. This is, however, exactly what the classical
graded interpolation theory provides, since it establishes the bulk rate through \emph{elementwise} weighted estimates
away from the core patch, $h_T^{-2}\|\bm e_h\|_T^2 + \|\Grad\bm e_h\|_T^2 \le C\,h_T^{2p}\,|E\bm u|^2_{H^{p+1}(T)}$,
with a weighted-space estimate on the core patch. These estimates sum to $R(h)^2$ \cite{babuska1979direct,
    apel1999anisotropic}. The Nitsche term then follows cell-by-cell from the cut interpolation trace estimate
$\|h_T^{-1/2}\bm e_h\|^2_{\partial\Omega\cap T} \le C\,\big(h_T^{-2}\|\bm e_h\|^2_T + \|\Grad\bm e_h\|^2_T\big)$, whose
constant is independent of the position of $\partial\Omega$ within the cell \cite{HANSBO20025537, Hansbo_2017}, and the
ghost-penalty term from the standard patchwise weak-consistency estimate for the interpolant; face-trace and inverse
inequalities reduce each face contribution to a two-cell best approximation error of the same elementwise form, as in
the cut interpolation theory of \cite{Hansbo_2017, Sticko2020, Burman_Hansbo_Larson_Zahedi_2025}. For the
Dirichlet--Neumann corner Assumption~\ref{ass:matchapprox} therefore holds with $R(h)=h^p$: radial grading with the
parameter governed by $\lambda_1$, the singular exponent of smallest positive real part in the Kondratiev expansion
\eqref{eq:cornerexpansion} at $\bm x_0$ (written $\lambda$ in Section~\ref{sec:mixedjunction}, where only the leading
mode was needed),
\begin{equation}
    \label{eq:gradingcond}
    \mu < \frac{\operatorname{Re}\lambda_1}{p},
\end{equation}
realises it \cite{babuska1979direct, apel1999anisotropic, grisvard1985elliptic}, and adaptive refinement realises the
same rate per degree of freedom without using the exponent. The dual clause is the same body-fitted statement at the
lowest rate: the adjoint solution carries the identical corner exponents, so the grading \eqref{eq:gradingcond} chosen
for the primal rate $R(h)=h^p$ in particular suffices for the dual rate $h$, which only requires $\mu <
    \operatorname{Re}\lambda_1$.

\subsection{CutFEM inherits the matching-mesh rate}
\label{sec:gradinginherit}

\begin{theorem}[Cut-independent inheritance]
    \label{thm:cutinherit}
    Under Assumption~\ref{ass:gradedstab} and the primal estimates of Assumption~\ref{ass:matchapprox}, and, for the
    nonlinear problem, under the additional local-size BRR hypotheses of
    Theorem~\ref{thm:gradedqo}\textnormal{(ii)}, for $h\le h_0$ the CutFEM solution on the background family
    $\{\mathcal T_h\}$ satisfies
    \begin{equation}
        \label{eq:cutinheritrate}
        |||\bm u-\bm u_h|||_g \le C\,R(h),
    \end{equation}
    with $C$ independent of how $\partial\Omega$ cuts the mesh. For linear elasticity, the dual clause of
    Assumption~\ref{ass:matchapprox} additionally gives
    \begin{equation}
        \label{eq:cutinheritl2}
        \|\bm u-\bm u_h\|_{L^2(\Omega)} \le C\,h\,R(h).
    \end{equation}
    In particular, for the linear Dirichlet--Neumann junction problem, radial grading gives $R(h)=h^p$, so
    $|||\bm u-\bm u_h|||_g\le C\,h^p$ and $\|\bm u-\bm u_h\|_{L^2(\Omega)}\le C\,h^{p+1}$: the cap
    \eqref{eq:l2cap} is removed.
\end{theorem}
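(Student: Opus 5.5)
The plan is the standard Strang/Céa argument in the local-size norm $|||\cdot|||_g$, using the interpolant $I_hE\bm u$ as comparison function; the nonlinear case is deferred to the local-size BRR framework. \emph{Linear case first.} Split
\[
\bm u-\bm u_h=(E\bm u-I_hE\bm u)+(I_hE\bm u-\bm u_h)=\bm e_h+\bm\xi_h ,
\]
with $\bm\xi_h\in\mathbb V_h$; the first part is bounded by $C\,R(h)$ by \eqref{eq:matchrate}. For $\bm\xi_h$, Assumption~\ref{ass:gradedstab} supplies the coercivity $c_s|||\bm\xi_h|||_g^2\le A_h(\bm\xi_h,\bm\xi_h)$, with $c_s$ cut- and level-independent. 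Next I use Galerkin orthogonality. The extended exact solution satisfies $A_h(E\bm u,\bm v_h)=\ell(\bm v_h)$, because integration by parts cancels the Nitsche consistency term, $E\bm u$ has zero trace on $\Gamma_D$, and the ghost-penalty jumps of $E\bm u$ vanish since it is smooth across interior faces away from $\bm x_0$. The exception is the core patch, where the face jumps must be read in the weak sense that $E\bm u\in H^{1+\lambda-\epsilon}$ permits. Orthogonality then gives $A_h(\bm\xi_h,\bm\xi_h)=A_h(\bm e_h,\bm\xi_h)$.

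\emph{The main obstacle} is bounding $A_h(\bm e_h,\bm\xi_h)$, because $\bm e_h$ is not discrete and the inverse trace inequality cannot be applied to it. The terms are handled as follows.
\begin{itemize}
\item The bulk term and the penalty term are bounded directly by Cauchy--Schwarz.
\item The Nitsche term $\int_{\partial\Omega}\bm n\cdot(\mathbb L:\Grad\bm\xi_h)\cdot\bm e_h$ is bounded using the local-size discrete trace inequality on $\bm\xi_h$ together with extension, exactly as in Proposition~\ref{prop:coercivity}.
\item The ghost-penalty term reduces, by Cauchy--Schwarz, to the ghost seminorm of $I_hE\bm u$, which is part of $|||\bm e_h|||_g$.
\item The transposed Nitsche term $\int_{\partial\Omega}\bm n\cdot(\mathbb L:\Grad\bm e_h)\cdot\bm\xi_h$ needs control of $\|h_T^{1/2}\Grad\bm e_h\|_{\partial\Omega}$.
\end{itemize}
For the last item I would add this quantity to the norm, i.e.\ work with $|||\bm v|||_{g,*}^2=|||\bm v|||_g^2+\sum_T\|h_T^{1/2}\Grad\bm v\|^2_{\partial\Omega\cap T}$. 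I would then bound it cellwise by the continuous cut trace inequality, $h_T\|\Grad\bm e_h\|^2_{\partial\Omega\cap T}\le C(\|\Grad\bm e_h\|_T^2+h_T^2|\bm e_h|^2_{H^2(T)})$. On cells away from the core patch, the elementwise weighted estimates quoted after Assumption~\ref{ass:matchapprox} bound this by $h_T^{2p}|E\bm u|^2_{H^{p+1}(T)}$, and these sum to $R(h)^2$. On the core patch, where $E\bm u\notin H^2$, I would switch to the weighted (Kondratiev) estimate. If this step turns out to be delicate, the fallback is to interpret the statement as already including it: Assumption~\ref{ass:gradedstab} asserts the linear quasi-optimality estimate in $|||\cdot|||_g$, and combined with \eqref{eq:matchrate} that yields \eqref{eq:cutinheritrate} directly. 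In either case all constants come from cut-independent trace, extension and coercivity bounds.

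\emph{$L^2$ estimate by Aubin--Nitsche.} Take $\bm g=\bm u-\bm u_h$ on $\Omega$, and let $\bm\varphi_{\bm g}$ solve the adjoint problem. Since $A_h$ is symmetric and adjoint-consistent (symmetric Nitsche), $\|\bm g\|^2=A_h(\bm u-\bm u_h,E\bm\varphi_{\bm g})$. Galerkin orthogonality then lets me subtract $I_hE\bm\varphi_{\bm g}$, and continuity in the augmented norm gives
\[
\|\bm g\|^2\le C\,|||\bm u-\bm u_h|||_{g,*}\;|||E\bm\varphi_{\bm g}-I_hE\bm\varphi_{\bm g}|||_{g,*}\le C\,R(h)\,h\,\|\bm g\|
\]
by the dual clause of Assumption~\ref{ass:matchapprox}. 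Dividing by $\|\bm g\|$ gives \eqref{eq:cutinheritl2}.

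\emph{Nonlinear case.} I would repeat the fixed-point argument of Theorem~\ref{thm:brr} in $|||\cdot|||_g$, with the local-size hypotheses of Theorem~\ref{thm:gradedqo}(ii) replacing (J0)--(J2). Stability comes from (J0) and (J2), consistency from the computation above with $F_h(I_hE\bm u)=F_h(I_hE\bm u)-F_h(E\bm u)$, and the contraction from (J1). The resulting quasi-optimality, combined with \eqref{eq:matchrate}, gives \eqref{eq:cutinheritrate}.

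\emph{Final claim.} Under the grading \eqref{eq:gradingcond} we have $R(h)=h^p$, so the two bounds become $h^p$ and $h^{p+1}$, and the cap \eqref{eq:l2cap} is removed.
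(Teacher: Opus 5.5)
Your skeleton is the paper's proof. It has the same four ingredients: quasi-optimality in $|||\cdot|||_g$; the competitor $\bm v_h=I_hE\bm u$, whose bulk, Nitsche and ghost-penalty contributions coincide with those of $|||\bm e_h|||_g$; the Aubin--Nitsche pairing with the dual clause of Assumption~\ref{ass:matchapprox}; and the nonlinear case via Theorem~\ref{thm:gradedqo}(ii). Your fallback for the energy estimate is exactly what the paper does: it takes linear quasi-optimality in $|||\cdot|||_g$ from Assumption~\ref{ass:gradedstab} by C\'ea's lemma and inserts \eqref{eq:matchrate}, with no Strang split.

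Your augmented-norm detour identifies something the paper passes over. Proposition~\ref{prop:gradedcoercivity} proves continuity only on $\mathbb V_h\times\mathbb V_h$. C\'ea's lemma and the duality pairing, however, evaluate $A_h$ at the non-discrete $\bm u-\bm v_h$, where $\int_{\partial\Omega}\bm n\cdot(\mathbb L:\Grad(\bm u-\bm v_h))\cdot\bm w_h\,dS$ is not controlled by $|||\cdot|||_g$.

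As written, though, the detour leaves your own $L^2$ step unsupported:
\begin{itemize}
\item You bound $|||E\bm\varphi_{\bm g}-I_hE\bm\varphi_{\bm g}|||_{g,*}$ ``by the dual clause'', but that clause only controls $|||\cdot|||_g$.
\item If you use the fallback for the energy estimate, you obtain $|||\bm u-\bm u_h|||_g\le C\,R(h)$, not the $|||\cdot|||_{g,*}$ bound your pairing needs.
\end{itemize}
Closing the argument requires gradient-trace interpolation bounds $\sum_T h_T\|\Grad(E\bm w-I_hE\bm w)\|^2_{\partial\Omega\cap T}\lesssim R(h)^2$ for $\bm w=\bm u$, and $\lesssim h^2\|\bm g\|^2$ for $\bm w=\bm\varphi_{\bm g}$. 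These must hold on the core patch too, where your $H^2$-based trace estimate fails because $E\bm u\notin H^2$ there. They are additional to Assumption~\ref{ass:matchapprox}.

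They can also hold only if $\operatorname{Re}\lambda_1>1/2$. Otherwise $|\Grad E\bm u|\sim r^{\operatorname{Re}\lambda_1-1}$ is not square-integrable along $\Gamma_D$ near $\bm x_0$, and $|||\bm e_h|||_{g,*}=\infty$ whenever $\Lambda\neq0$. So your route can be completed at the right-angle pole corners, where $\lambda_1\ge 0.5946$. There you would use a fractional trace estimate for $\Grad\bm e_h\in H^s$ with $1/2<s<\lambda_1$ on the core cells. It cannot be completed at a straight junction.

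You therefore have two options. Either state and prove these bounds as extra hypotheses, which restricts the theorem to $\operatorname{Re}\lambda_1>1/2$. Or pair in $|||\cdot|||_g$ as the paper does, accepting its implicit continuity step.
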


\begin{proof}
    Assumption~\ref{ass:gradedstab} gives quasi-optimality in $|||\cdot|||_g$ for the linear problem by C\'ea's lemma;
    for a regular nonlinear branch the same estimate follows conditionally from
    Theorem~\ref{thm:gradedqo}\textnormal{(ii)}. It therefore suffices to exhibit one discrete competitor. Take $\bm
        v_h=(I_h E\bm u)|_{\Omega_T}\in\mathbb V_h$. Since $\bm u=E\bm u$ on $\Omega\subset\Omega_T$ and the
    interior-face jumps of $E\bm u$ vanish, each of the three contributions to $|||\bm u-\bm v_h|||_g$, namely the bulk
    gradient on $\Omega$, the local-size Nitsche boundary term $\sum_T\|h_T^{-1/2}(\bm u-\bm
        v_h)\|^2_{\partial\Omega\cap T}$, and the ghost-penalty seminorm, coincides with the corresponding term of
    $|||\,E\bm u-I_h E\bm u\,|||_g$, so Assumption~\ref{ass:matchapprox} gives directly
    \begin{equation*}
        \inf_{\bm v_h\in\mathbb V_h}|||\bm u-\bm v_h|||_g \le |||\,E\bm u-I_h E\bm u\,|||_g \le C\,R(h)
    \end{equation*}
    and the energy estimate in \eqref{eq:cutinheritrate}; the cut enters only through the cut-independent constants of
    Assumption~\ref{ass:gradedstab}, the approximability being posed on full cells of the background family. For linear
    elasticity, the Aubin--Nitsche argument and the dual clause of Assumption~\ref{ass:matchapprox} give an $O(h)$
    energy best approximation of the dual solution. Pairing it with the primal energy error $O(R(h))$ proves
    \eqref{eq:cutinheritl2}.
\end{proof}

For clarity, the nonlinear part of Theorem~\ref{thm:cutinherit} is the conditional energy estimate
\eqref{eq:cutinheritrate}; the $L^2$ estimate \eqref{eq:cutinheritl2} is a linear-elastic result.

The theorem is a \emph{reduction} result: the singular structure enters through $R(h)$ in
Assumption~\ref{ass:matchapprox}, whereas the cut enters through the cut-independent constants of
Assumption~\ref{ass:gradedstab} and the interpolation trace estimate. Three consequences follow.

\emph{First, refinement is required already for linear elasticity.} The corner singularity limits the quasi-uniform
matching-mesh rate to $R(h)=h^{\operatorname{Re}\lambda_1}$, with Proposition~\ref{prop:junctionlower} ruling out a
better energy rate; grading satisfying \eqref{eq:gradingcond}, or adaptivity without prior knowledge of $\lambda_1$,
restores $R(h)=h^p$.
Theorem~\ref{thm:cutinherit} transfers either rate to CutFEM without restricting $\lambda_1$: the cut does not alter the
refinement requirement of the body-fitted problem.

\emph{Second, the finite-strain problem inherits the rate conditionally} under the local-size BRR hypotheses of
Theorem~\ref{thm:gradedqo}. Finite strain introduces no additional \emph{cut} difficulty, but when the corner gradient
is singular the near-tip field need not follow the linear characteristic equation \eqref{eq:charequation}
\cite{knowles1973asymptotic, knowles1974finite}. The appropriate grading exponent, whether the matching mesh attains
$R(h)=h^p$, and the relevant, possibly weighted, norm are then properties of the continuous problem and its
body-fitted approximation, not of the cut. These questions remain open; exponent-free adaptivity sidesteps them.

\emph{Third, the obstruction is localised at the corner.} Interior estimates
\cite{schatz1979maximum, babuska1987optimal} give convergence of approximately
$h^{2\operatorname{Re}\lambda_1}$ away from the junction, better than the global energy rate
$h^{\operatorname{Re}\lambda_1}$ but still suboptimal for $p\ge2$. The global cap \eqref{eq:l2cap} remains an
approximation property: with the standard polynomial space fixed, quadrature, stabilisation, and penalty choices
cannot remove it. One must improve the approximation near $\bm x_0$ by local refinement
(Section~\ref{sec:remedy}) or singular enrichment (Section~\ref{sec:gradingvsenrich}). We recommend adaptivity, which
realises Assumption~\ref{ass:matchapprox} without prescribing an exponent. Assumption~\ref{ass:gradedstab} is
established in~\ref{app:gradedstab} under shape-regularity, a bounded neighbour-size ratio, and a bounded
active-layer width.

Figure~\ref{fig:graded_error} confirms the reduction numerically. Applying the radial grading \eqref{eq:grading}
towards the two Dirichlet--Neumann junctions of the pole, with the parameter chosen from \eqref{eq:gradingcond}, we
measure the CutFEM self-convergence $\|\bm u_h^{\mathrm c}-\bm u_{h_{\mathrm{ref}}}^{\mathrm c}\|_{L^2}$ for both
linear elasticity and the neo-Hookean model, at Poisson ratio $\nu=0.45$, where
$\bm u_{h_{\mathrm{ref}}}^{\mathrm c}$ is the CutFEM solution on a finer graded reference mesh. On the graded family $N=O(h^{-2})$, so the linear-elastic $L^2$ rate
$h^{p+1}$ proved in Theorem~\ref{thm:cutinherit} corresponds to the slopes $N^{-1}$ for $p=1$ and $N^{-3/2}$ for $p=2$;
we use the same slopes only as empirical benchmarks for the neo-Hookean results. For $p=1$ both models track the
$N^{-1}$ slope, recovering the order that was capped on the quasi-uniform mesh (Figure~\ref{fig:poleconvergence}). For
$p=2$ the evidence is partial: the linear-elastic curve approaches the optimal $N^{-3/2}$ slope (the per-level rate
clusters near $3$, with mild preasymptotic scatter), but the neo-Hookean $p=2$ curve has not reached it over the levels
computed: its effective slope drifts downward in Figure~\ref{fig:graded_error}, from $\approx2.8$ on the coarse levels toward $\approx2$
on the finest, so the asymptotic optimal slope is not yet attained. We read this as preasymptotic rather than as a
failure of the inheritance result: the grading parameter \eqref{eq:gradingcond} is fixed by the \emph{linear} exponent
$\lambda_1$, whereas at $p=2$ the discretisation already resolves the genuinely nonlinear near-tip field, whose
exponent, and hence the amount of grading that is actually optimal, is the open finite-strain question flagged above.
Pushing the neo-Hookean $p=2$ test case into its asymptotic regime, and the systematic study on adaptively refined cut
meshes where the exponent is realised a posteriori, are left for future work.

\begin{figure}[h]
    \centering
    \begin{minipage}{0.5\textwidth}
        \centering
        \begin{tikzpicture}
            \begin{loglogaxis}[
                    width=\textwidth, height=6cm,
                    xlabel={number of degrees of freedom $N$},
                    ylabel={$\|\bm u_h^{\mathrm c}-\bm u_{h_{\mathrm{ref}}}^{\mathrm c}\|_{L^2}$},
                    legend pos=south west,
                    legend cell align=left,
                    legend style={font=\footnotesize},
                    grid=major,
                ]
                \addplot[blue, thick, dashed, mark=o] coordinates {
                        (278, 7.9074e+00) (842, 3.0777e+00) (2590, 9.3419e-01)
                        (8794, 2.7095e-01) (32734, 7.3754e-02)
                    };
                \addlegendentry{Lin, $p=1$}
                \addplot[red, thick, dashed, mark=square] coordinates {
                        (1018, 1.6788e+00) (3178, 3.0539e-01) (10022, 5.3576e-02)
                        (34502, 1.1598e-02) (129690, 1.6952e-03)
                    };
                \addlegendentry{Lin, $p=2$}
                \addplot[blue, thick, mark=*] coordinates {
                        (278, 1.5009e+01) (842, 5.0226e+00) (2590, 1.5936e+00)
                        (8794, 4.0189e-01)
                    };
                \addlegendentry{NH, $p=1$}
                \addplot[red, thick, mark=square*] coordinates {
                        (1018, 2.8238e+00) (3178, 5.7034e-01) (10022, 1.4265e-01)
                        (34502, 4.0125e-02)
                    };
                \addlegendentry{NH, $p=2$}
                \addplot[gray, dashed, thick, domain=300:30000, samples=2] {6e3*x^(-1)}
                node[pos=0.9, above, sloped, font=\footnotesize] {$N^{-1}$};
                \addplot[gray, dashdotted, thick, domain=1000:130000, samples=2] {10e3*x^(-1.5)}
                node[pos=0.1, above, sloped, font=\footnotesize] {$N^{-3/2}$};
            \end{loglogaxis}
        \end{tikzpicture}
        \\[2pt] {\footnotesize (a)}
    \end{minipage}\hfill
    \begin{minipage}{0.48\textwidth}
        \centering
        \newcommand{\cornerR}{1.6cm}
        \newcommand{\cornerW}{4.16cm}
        \newcommand{\cornercircle}[2][0pt]{
            \raisebox{#1}{%
                \begin{tikzpicture}
                    \clip (0,0) circle (\cornerR);
                    \node[anchor=center] at (0,0)
                    {\includegraphics[width=\cornerW,trim={0 0 0 256},clip]{#2}};
                    \draw[gray, thick] (0,0) circle (\cornerR);
                \end{tikzpicture}}}
        \cornercircle{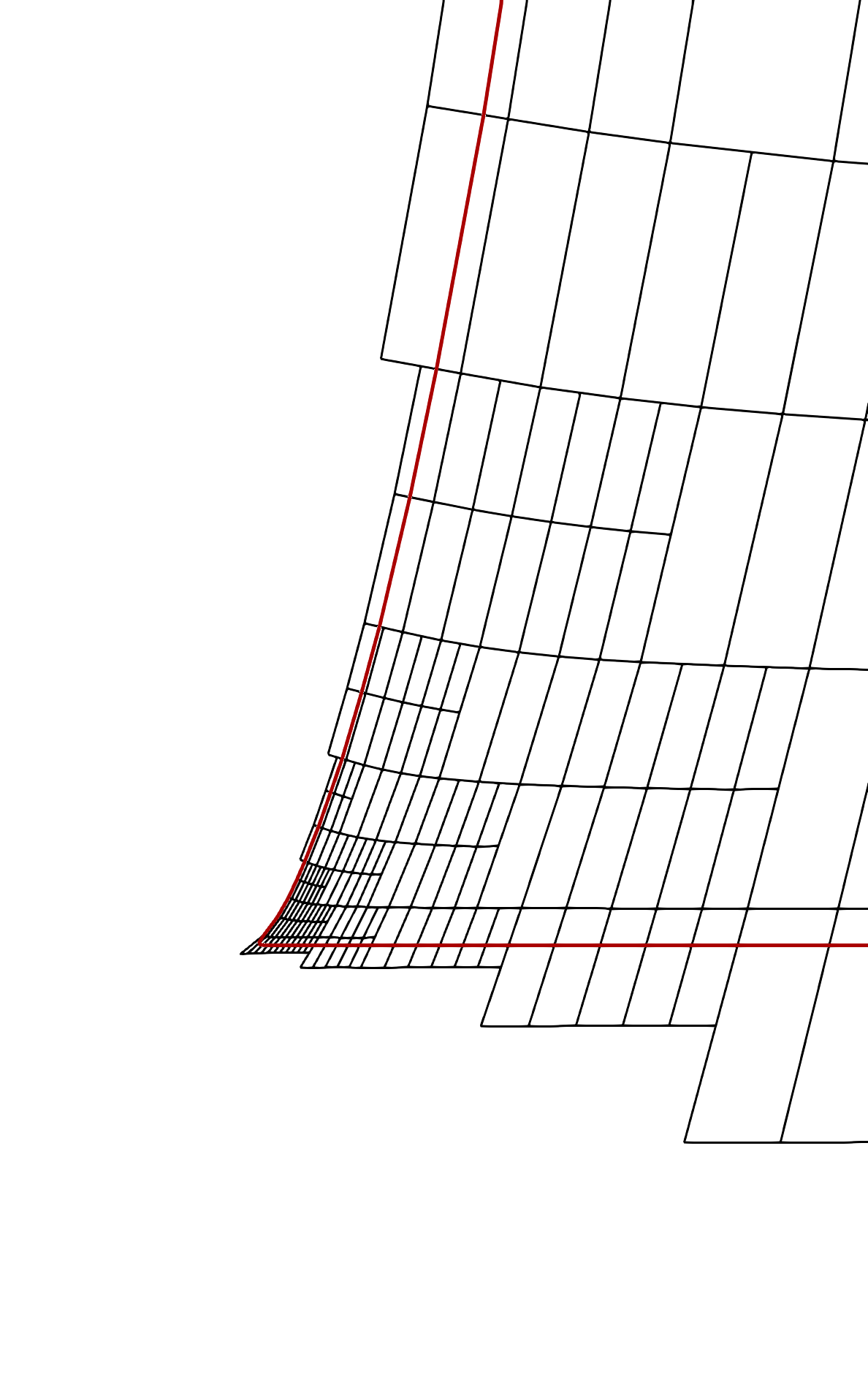}\hfil
        \cornercircle[0pt]{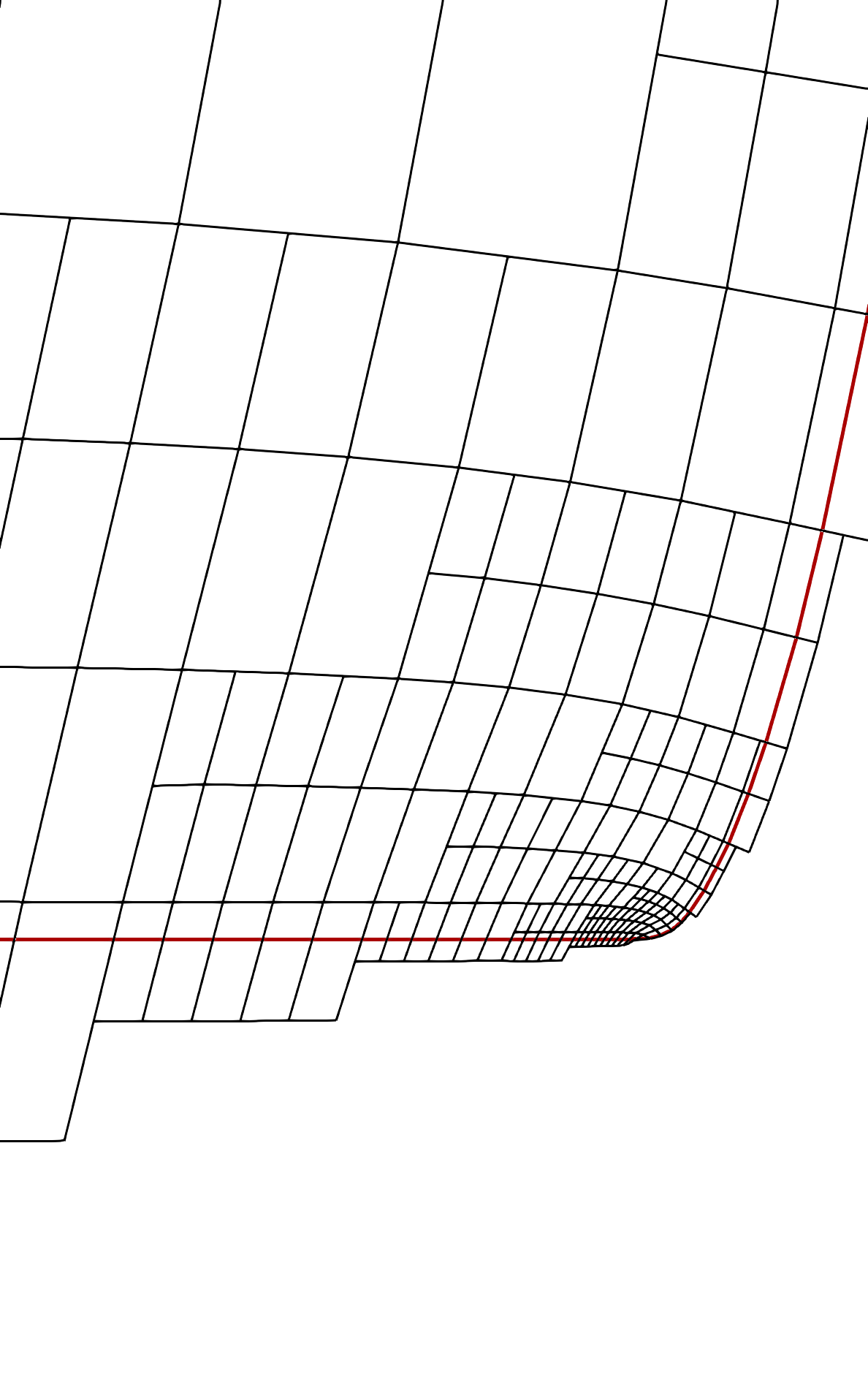}
        \\[2pt] {\footnotesize (b)}
    \end{minipage}
    \caption{Radial grading \eqref{eq:grading} of the pole at $\nu=0.45$.
        \emph{(a)} CutFEM self-convergence $\|\bm u_h^{\mathrm c}-\bm u_{h_{\mathrm{ref}}}^{\mathrm c}\|_{L^2}$
        against a finer graded CutFEM reference mesh for linear (Lin) and neo-Hookean (NH) models, with
        $p=1,2$; grey lines show the optimal slopes $N^{-1}$ and $N^{-3/2}$. \emph{(b)} Grading near the two
        Dirichlet--Neumann junctions; red marks the pole boundary.}
    \label{fig:graded_error}
\end{figure}

\subsection{Related remedies and the three-dimensional outlook}
\label{sec:gradingvsenrich}

Instead of grading, one may enrich $\mathbb V_h$ by the singular mode $r^\lambda\bm\Phi$ \cite{strang1973analysis,
    blum1982boundary, fix1973finite}. In two dimensions this adds one global degree of freedom per corner and retained mode
without changing the cut cells, but the enrichment becomes nearly linearly dependent on $\mathbb V_h$ as $h\to0$ and
represents only the leading \emph{linearised} mode for the neo-Hookean problem. Grading avoids these conditioning and
modelling issues and requires no change to the discrete space, stabilisation, or solver. It prescribes the exponent
through \eqref{eq:gradingcond} and therefore supports the analysis, whereas residual-driven adaptivity can recover the
rate without knowing $\lambda_1$. Since Theorem~\ref{thm:cutinherit} depends only on the resulting local sizes, it also
transfers an adaptive matching-mesh rate, provided the local-size stability of Assumption~\ref{ass:gradedstab} holds. A
robust a posteriori estimator for the nonlinear unfitted problem remains an open requirement.

The formulation, AD-generated constitutive code, and analysis in Sections~\ref{sec:formulation}--\ref{sec:theory} apply
in general dimension; two ingredients are specific to 2D. In three dimensions, the Kolosov--Muskhelishvili exponents
are replaced by edge and vertex eigenproblems \cite{kozlov2001spectral, mazya2010elliptic, dauge1988elliptic}, and the
junction becomes a curve crossing cut cells. Splitting the boundary quadrature then requires integration over implicit
surface patches bounded by that feature curve, beyond smooth-geometry rules such as \cite{Saye2015-xr}. The
regularity-induced cap and the need for local refinement are nevertheless expected to persist, with dimension-dependent
exponents and constants.

\section{Conclusion}
We presented a fully variational, model-independent CutFEM framework for finite-strain elasticity.
One augmented energy combines the bulk response, weak boundary conditions, and ghost penalty. At each iterate, its
first variation gives the residual with the adaptive Nitsche weight frozen, and a symmetric, coercive approximation of
its Hessian gives the quasi-Newton tangent. AceGen generates the constitutive stress and elasticity tensors from the
scalar energy density, allowing the neo-Hookean model and \modelB{} to use the same implementation.

For the linearised quasi-Newton problem, we proved cut-independent coercivity and continuity, an $O(h^{-2})$ condition
number bound, and descent of each damped correction under uniform ellipticity. The BRR result gives quasi-optimality
for regular solutions under the Jacobian-defect and differentiability hypotheses of Theorem~\ref{thm:brr}, and the
smooth test case attains the expected optimal rates. At Dirichlet--Neumann junctions, however, the exact corner
singularity caps both fitted and unfitted methods on quasi-uniform meshes; the Kolosov--Muskhelishvili equation
predicts the observed dependence on the corner type and material parameters. We stress that this cap is a property of
the exact solution, whose regularity is limited by the corner, and not of the discretisation: it is not an artefact of
the cut, of the Nitsche imposition, of the ghost penalty, or of the cut-cell quadrature, and a body-fitted method on
the same quasi-uniform meshes is subject to the identical bound. What is specific to the unfitted setting is only that
none of its geometric devices (exact splitting of the boundary integral at the junction, penalty tuning, or aligning
the junction with a mesh vertex) can circumvent it.

Local refinement removes this approximation barrier. Theorem~\ref{thm:cutinherit} shows that CutFEM inherits the rate
of the matching refined mesh with cut-independent constants; its geometric and linear-elastic ingredients are proved in
\ref{app:gradedstab}, while the finite-strain extension remains conditional on Theorem~\ref{thm:gradedqo}.
Numerically, optimal $L^2$ recovery is observed for $p=1$ in both material models; at $p=2$, the linear-elastic results
approach the optimal slope but the neo-Hookean case remains preasymptotic. Higher-order finite-strain recovery is thus
neither proved nor confirmed here. Adaptive cut meshes, nonlinear near-tip asymptotics, and junction-exact quadrature
in three dimensions remain subjects for future work, alongside extensions to richer materials and multiphysics.

\paragraph{Declarations}

This manuscript is the result of an extended iterative process supported by language models (Claude, Gemini, and
ChatGPT). The models assisted with drafting and
revision, as well as with exploring, checking, and refining mathematical arguments and proofs. All generated
suggestions were critically assessed by the authors, who independently verified the proofs, computations, and
scientific claims and take full responsibility for the final manuscript.

\bibliographystyle{elsarticle-num}
\bibliography{literature}

\newpage
\appendix

\section{Local-size CutFEM stability on graded meshes}
\label{app:gradedstab}

This appendix proves Assumption~\ref{ass:gradedstab}: the cut-independent stability of Section~\ref{sec:theory},
established there for a globally quasi-uniform background mesh, persists on the locally refined family $\{\mathcal
    T_h\}$ when the global size $h$ is replaced by the local sizes $h_T,h_F$ and the triple norm \eqref{eq:triplenorm} by
its mesh-dependent counterpart $|||\cdot|||_g$. The argument generalises the geometric and coercivity ingredients of
Proposition~\ref{prop:coercivity} to remove the quasi-uniformity assumption and states the corresponding nonlinear
result conditionally under local-size analogues of (J0)--(J2); it requires nothing of the refinement
beyond shape-regularity, a bounded neighbour-size ratio, and a bounded active-layer width, all satisfied by the radial
grading \eqref{eq:grading} and by standard adaptive (e.g. newest-vertex-bisection or one-irregular quadtree)
refinement.

\subsection{Geometric assumptions and the local-size norm}

Throughout, $\{\mathcal T_h\}$ is a family of background meshes, $T^h_\Omega$, $T^h_{\partial\Omega}$, $\Omega_T$,
$\mathbb V_h$ and $A_h(\cdot,\cdot)=\mathcal K_{\text{Total}}(\bar{\bm u};\cdot,\cdot)$ as in Section~\ref{sec:theory}.
We attach to every element $T$ its size $h_T=\operatorname{diam}T$ and to every interior face $F$ the size
$h_F=\operatorname{diam}F$, and we assume:

\begin{enumerate}
    \item[\textnormal{(G1)}] \emph{Shape-regularity.} Each $T\in\mathcal T_h$ contains a ball of radius
          $\rho_T$ with $h_T\le\sigma\,\rho_T$, with $\sigma$ independent of $T$ and of the refinement
          level.
    \item[\textnormal{(G2)}] \emph{Bounded neighbour-size ratio.} For every pair of elements $T,T'$
          sharing a face $F$, $\,c_N^{-1}\le h_T/h_{T'}\le c_N$, with $c_N$ independent of the level;
          consequently $h_F\simeq h_T\simeq h_{T'}$ on $F$.
    \item[\textnormal{(G3)}] \emph{Bounded active-layer width.} There is an integer $L$, independent of the
          refinement level, such that every cut cell $T\in T^h_{\partial\Omega}$ is joined to a cell
          $T_{\text{int}}\subset\Omega$ interior to the physical domain by a chain of at most $L$ successive
          face-neighbours, all lying in $T^h_\Omega$.
\end{enumerate}

All three hold for \eqref{eq:grading} with $\sigma,c_N,L$ depending only on the grading construction (and, for (G3),
on the opening angle $\omega$ and the Lipschitz character of $\partial\Omega$), and for graded-conforming adaptive
refinement with a fixed level-jump bound. The first two are standard; (G3) is the usual bounded-active-layer assumption
of ghost-penalty analysis and is what makes the chain argument of Lemma~\ref{lem:locext} level-independent, so we
record why the radial grading \eqref{eq:grading} satisfies it. A family is locally quasi-uniform as a consequence of
(G1)--(G2) (within any ball whose radius is comparable to the local cell size the cells have comparable size), so the
number of cut-cell layers separating $\Omega$ from the fictitious region is controlled at every scale by the Lipschitz
character of $\partial\Omega$ alone. For \eqref{eq:grading} this is explicit. Away from the junction, $h_T\simeq
    h\,r_T^{1-\mu}\ll r_T$, so at the scale of a cut cell the boundary is locally flat and $T$ is separated from an
interior cell by $O(1)$ layers. Inside the core patch $r_T\le h^{1/\mu}$ the cells share the common size $h^{1/\mu}$,
and a bounded number of them covers the corner neighbourhood; the junction (the only point at which the two boundary
segments meet) is therefore crossed in $O(1)$ layers as well. Hence (G3) holds with $L$ fixed by $\omega$, the
Lipschitz constant of $\partial\Omega$ and $\sigma$, independently of $h$.

Assumption (G2) is not a disguised quasi-uniformity, and in particular it does not rule out the strong
refinement one uses for a corner singularity. It is a purely \emph{local} condition: the global sizes
$h_T$ may vary by orders of magnitude across the mesh, which is precisely the point of grading towards
$\bm x_0$, and (G2) constrains only the \emph{jump} between two elements sharing a face. This is exactly
the standard $2{:}1$ balance (one-irregularity) condition: at most a bounded number of refinement levels
may change across any single face. The radial grading \eqref{eq:grading} is smooth in $r_T$ and so
satisfies it with $c_N$ fixed by the construction; newest-vertex-bisection and quadtree/octree
refinement with the usual mesh balancing satisfy it with $c_N=2$.

\begin{remark}[Why \textnormal{(G2)} does not exclude adaptivity]
    \label{rem:notquasiuniform}
    What (G2) does exclude is only
    \emph{unbalanced} refinement, e.g.\ an element of size $h$ placed directly against one of size $h/2^k$ with
    $k\to\infty$, which produces neither a usable interpolation nor a stable ghost-penalty coupling and is
    not generated by standard adaptive loops. This assumption is therefore mild in the operative sense: it
    admits every graded and balanced-adaptive mesh one would target a singularity with, and forbids only the
    pathological unbalanced ones.
\end{remark}

The local-size energy norm is
\begin{equation}
    \label{eq:gradednorm}
    |||\bm v|||_g^2 \;:=\;
    \|\Grad\bm v\|_\Omega^2
    \;+\; \sum_{T\in T^h_{\partial\Omega}} \|h_T^{-1/2}\bm v\|_{\partial\Omega\cap T}^2
    \;+\; |\bm v|_{s,g}^2 ,
    \qquad
    |\bm v|_{s,g}^2 := \sum_{F\in\mathcal F_h}\sum_{k=1}^{p} \gamma_A\,h_F^{2k-1}\,
    \big\|[\![\partial_{n_F}^k \bm v]\!]\big\|_{F}^2 ,
\end{equation}
i.e.\ \eqref{eq:triplenorm} with the global $h$ in the Nitsche and ghost-penalty scalings replaced by the
local $h_T,h_F$. On a quasi-uniform mesh $h_T\simeq h_F\simeq h$ and \eqref{eq:gradednorm} reduces to
\eqref{eq:triplenorm}, so what follows specialises to Section~\ref{sec:theory}. The penalty weight $\eta$ of
\eqref{eq:penaltyweight} and the ellipticity Assumption~\ref{ass:ellipticity}, being pointwise statements on
$\Omega_T$, are unaffected by the refinement and are kept unchanged.

\subsection{Localised ingredients}

The two unfitted ingredients of Section~\ref{sec:theory} are elementwise and scale-covariant; we record their
local-size forms. The argument is that each is an estimate on a \emph{single} full cell (or a face-neighbour pair),
whose constant is produced by a scaling argument to the reference element and therefore depends only on $p$, on
(G1)--(G2) and, for the cut trace inequality, on the local boundary-geometry constants introduced in
Lemma~\ref{lem:loctrace} below, and not on the global size or on how $\partial\Omega$ meets the cell.

\begin{lemma}[Local-size cut trace inequality]
    \label{lem:loctrace}
    Assume \textnormal{(G1)}, and additionally that within each cut cell the boundary portion
    $\partial\Omega\cap T$ is the union of at most $N_\Gamma$ uniformly Lipschitz graph patches of total surface
    measure $|\partial\Omega\cap T|\le C_\Gamma\,h_T^{d-1}$, with $N_\Gamma$, $C_\Gamma$ and the graph constant
    independent of $T$ and of the refinement level. For the geometries of this paper, this means either a single smooth
    boundary piece, locally flat at the scale of the cell, or the two straight segments meeting at the junction
    $\bm x_0$ inside a core-patch cell. Then for every $T\in T^h_{\partial\Omega}$ and every
    $\bm w_h\in(\mathbb P_p(T))^d$,
    \begin{equation}
        \label{eq:loccuttrace}
        h_T\,\|\bm w_h\|_{\partial\Omega\cap T}^2 \;\le\; C_T\,\|\bm w_h\|_T^2 ,
    \end{equation}
    with $C_T=C_T(p,\sigma,N_\Gamma,C_\Gamma)$ independent of $h_T$ and of the position of $\partial\Omega$ in $T$.
\end{lemma}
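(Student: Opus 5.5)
The plan is a scaling argument to a reference configuration, combined with a uniform trace estimate for polynomials on Lipschitz graph patches. Map $T$ to a reference cell $\hat T$ of unit diameter by the affine (for Cartesian cells, translation--dilation) map $\bm x=\bm x_T+h_T\hat{\bm x}$. By (G1) the image $\hat T$ contains a ball of radius $\sigma^{-1}$ and has unit diameter. The boundary portion $\partial\Omega\cap T$ maps to $\hat\Gamma:=h_T^{-1}(\partial\Omega\cap T-\bm x_T)$. This image is again a union of at most $N_\Gamma$ Lipschitz graph patches with the same graph constant, since dilations preserve Lipschitz constants of graphs. Its total measure is at most $C_\Gamma$. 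The scaling relations are
\[
\|\bm w_h\|^2_{\partial\Omega\cap T}=h_T^{d-1}\|\hat{\bm w}\|^2_{\hat\Gamma},
\qquad
\|\bm w_h\|^2_T=h_T^{d}\|\hat{\bm w}\|^2_{\hat T}.
\]
Polynomials of degree $p$ are mapped to polynomials of degree $p$. It therefore suffices to prove $\|\hat{\bm w}\|^2_{\hat\Gamma}\le C\|\hat{\bm w}\|^2_{\hat T}$ with $C=C(p,\sigma,N_\Gamma,C_\Gamma)$, uniformly over all admissible $\hat\Gamma$. Multiplying by $h_T$ and rescaling then gives \eqref{eq:loccuttrace}.

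For the reference estimate I would avoid any trace theorem on the cut piece $\hat T\cap\Omega$, which may be arbitrarily thin. Instead I would use $L^\infty$ control, which is blind to the position of $\hat\Gamma$. Since $\hat\Gamma\subset\hat T$,
\[
\|\hat{\bm w}\|^2_{\hat\Gamma}\le|\hat\Gamma|\,\|\hat{\bm w}\|^2_{L^\infty(\hat T)}\le C_\Gamma\,\|\hat{\bm w}\|^2_{L^\infty(\hat T)}.
\]
Next I need the norm equivalence $\|\hat{\bm w}\|_{L^\infty(\hat T)}\le C_\infty\|\hat{\bm w}\|_{L^2(\hat T)}$ on the finite-dimensional space $(\mathbb P_p)^d$. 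Its constant should depend only on $p$ and the shape of $\hat T$. For Cartesian cells $\hat T$ is a fixed unit cube, so the constant depends only on $p$. For general shape-regular cells I would use that $\hat T$ contains a ball $B$ of radius $\sigma^{-1}$ and lies inside a ball of radius $1$. A polynomial of degree $p$ satisfies $\|\hat{\bm w}\|_{L^\infty(B_1)}\le C(p,\sigma)\|\hat{\bm w}\|_{L^\infty(B)}$ by a Markov/Remez-type inequality, or equally by compactness of the unit sphere of $\mathbb P_p$ under dilation. Together with $\|\cdot\|_{L^\infty(B)}\le C\|\cdot\|_{L^2(B)}$ on the fixed-radius ball, this gives the bound. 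Combining yields the reference estimate with $C=C_\Gamma C_\infty^2$, and \eqref{eq:loccuttrace} follows with $C_T=C_\Gamma C(p,\sigma)^2$.

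The step I expect to need care is showing that the constant really is independent of where and how $\partial\Omega$ cuts $T$, including the core-patch cell that contains the junction $\bm x_0$. The $L^\infty$ route handles this cleanly: it uses only the measure bound $|\partial\Omega\cap T|\le C_\Gamma h_T^{d-1}$. Graph regularity and $N_\Gamma$ matter only in guaranteeing that bound and that the surface integral is well defined, which is why a kinked union of two segments causes no difficulty. For the two straight segments the bound is immediate, with length at most $2h_T$. For a smooth piece that is flat at the scale of the cell, the patch is a graph over a hyperplane with bounded slope. Its area is therefore at most $(1+L_\Gamma^2)^{1/2}$ times the area of the cell's projection, and hence $\lesssim h_T^{d-1}$. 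Finally I would check that the norm equivalence on $\hat T$ uses the full reference cell and never $\hat T\cap\Omega$, exactly as remarked after \eqref{eq:cuttrace}. This is what makes the constant cut-independent; all smallness of the cut is deferred to the ghost-penalty extension.
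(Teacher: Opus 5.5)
Your argument is correct, but it takes a different route from the paper.

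\textbf{The paper's route.} The paper scales $T$ to a reference element and invokes the cut-independent trace estimate for \emph{planar} cuts from \cite{HANSBO20025537, massing2014stabilized}. Its constant is obtained as a maximum over the compact family of cut positions. The paper then extends this estimate to curved Lipschitz patches and to the two-segment junction cell by applying it patch by patch and adding the bounds; this is where $N_\Gamma$ enters its constant.

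\textbf{Your route.} You avoid planar-cut estimates altogether. You bound the surface integral by $|\hat\Gamma|\,\|\hat{\bm w}\|^2_{L^\infty(\hat T)}$ and close with the finite-dimensional $L^\infty$--$L^2$ equivalence on the full cell. That equivalence is made uniform through the inscribed ball of radius $\sigma^{-1}$.

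\textbf{What your route buys.} It is more elementary and slightly more general. It uses only the measure bound $|\partial\Omega\cap T|\le C_\Gamma h_T^{d-1}$, so kinks, the junction cell and the number of patches play no role; your constant $C_\Gamma C(p,\sigma)^2$ is even independent of $N_\Gamma$. It also replaces the paper's sketched ``planar comparisons'' step for curved patches by a complete argument. Using a pure dilation instead of a general affine map makes the surface-measure scaling $h_T^{d-1}$ exact, rather than only comparable via (G1).

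\textbf{What the paper's route buys.} It stays tied to the trace estimates of the cited literature. In their general form these also control non-polynomial functions, as in the $H^1$-type bound $\|h_T^{-1/2}\bm e_h\|^2_{\partial\Omega\cap T}\lesssim h_T^{-2}\|\bm e_h\|_T^2+\|\nabla\bm e_h\|_T^2$ used for interpolation errors after Assumption~\ref{ass:matchapprox}. Your $L^\infty$ argument is confined to finite-dimensional spaces, which is all this lemma requires.
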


\begin{proof}
    This is \eqref{eq:cuttrace} with $h$ replaced by $h_T$. Map $T$ to a reference element $\hat T$ of unit
    size by the affine $F_T$; by (G1) the Jacobian satisfies $\|DF_T\|\simeq h_T$ and
    $\|DF_T^{-1}\|\simeq h_T^{-1}$ with constants depending only on $\sigma$. The cut-independent reference
    estimate $\|\hat{\bm w}\|_{\hat\Gamma}^2\le \hat C\,\|\hat{\bm w}\|_{\hat T}^2$, valid for every planar cut
    $\hat\Gamma$ of $\hat T$ by the polynomial trace argument of \cite{HANSBO20025537, massing2014stabilized}
    (its constant $\hat C=\hat C(p)$ is the maximum over the compact family of admissible cut positions),
    extends to the boundary geometry assumed above: each Lipschitz graph patch of bounded measure is covered by a
    bounded number of planar comparisons (for a boundary that is flat at the scale of the cell, a single one), and
    the estimate is applied to each patch separately and the bounds added, at the cost of a factor depending only on
    $N_\Gamma$ and $C_\Gamma$. In particular, in the corner-containing core cell, where the two straight segments
    meet inside $\hat T$, the planar estimate is applied to each segment and the two bounds added
    ($N_\Gamma = 2$). Pulling back under $F_T$, the surface and volume measures contribute the single power $h_T$,
    which yields \eqref{eq:loccuttrace} with $C_T = C_T(p,\sigma,N_\Gamma,C_\Gamma)$.
\end{proof}

\begin{lemma}[Local-size ghost-penalty extension]
    \label{lem:locext}
    Under \textnormal{(G1)}--\textnormal{(G3)}, there is $C_G=C_G(p,\sigma,c_N,L)$, independent of the level
    and of the cut, such that
    \begin{equation}
        \label{eq:locextension}
        \|\Grad\bm v_h\|_{\Omega_T}^2 \;\le\; C_G\Big( \|\Grad\bm v_h\|_\Omega^2 + \gamma_A^{-1}\,|\bm v_h|_{s,g}^2\Big)
        \qquad \forall\,\bm v_h\in\mathbb V_h .
    \end{equation}
\end{lemma}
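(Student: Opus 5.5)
The proof is the standard face-by-face chain argument of ghost-penalty analysis, carried out with local scalings so that no global mesh size appears. The basic tool is a \emph{two-cell estimate}. Let $T_+,T_-$ share a face $F\in\mathcal F_h$. Then for every $\bm v_h\in\mathbb V_h$,
\begin{equation*}
    \|\Grad\bm v_h\|_{T_+}^2 \le C\Big(\|\Grad\bm v_h\|_{T_-}^2 + \sum_{k=1}^{p} h_F^{2k-1}\big\|[\![\partial_{n_F}^k\bm v_h]\!]\big\|_F^2\Big),
\end{equation*}
with $C=C(p,\sigma,c_N)$.

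To prove it, let $\bm q$ be the polynomial extension of $\bm v_h|_{T_-}$ to $T_+$. On $T_+$ the difference $\bm v_h-\bm q$ is a polynomial of degree $p$ in each variable. Its tangential derivatives and its trace vanish on $F$, by continuity of $\bm v_h$. It is therefore determined by the normal-derivative jumps $[\![\partial_{n_F}^k\bm v_h]\!]$, $k=1,\dots,p$, via its Taylor expansion in the normal variable. The identity $\bm v_h-\bm q=\sum_k \frac{s^k}{k!}[\![\partial^k_{n_F}\bm v_h]\!]$ (with $s$ the normal distance to $F$) is exact in the tensor-product setting. A scaling argument to a reference pair then gives
\begin{equation*}
    \|\Grad(\bm v_h-\bm q)\|_{T_+}^2\le C\sum_k h_F^{2k-1}\|[\![\partial^k_{n_F}\bm v_h]\!]\|_F^2 .
\end{equation*}
A second scaling argument gives $\|\Grad\bm q\|_{T_+}\le C\|\Grad\bm v_h\|_{T_-}$; this is norm equivalence of polynomial extension on a patch of bounded shape. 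Both constants depend only on $p$, on $\sigma$ through (G1), and on the size ratio $c_N$ through (G2). The reference-patch configurations form a compact family: an affine image of a pair with size ratio in $[c_N^{-1},c_N]$. Hence the constants are uniform. No global $h$ enters, because each term scales with $h_F\simeq h_{T_\pm}$.

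Next, iterate along chains. By (G3), each cut cell $T\in T^h_{\partial\Omega}$ is joined to an interior cell $T_{\rm int}\subset\Omega$ by a chain of at most $L$ face-neighbours in $T^h_\Omega$. Applying the two-cell estimate at most $L$ times gives
\begin{equation*}
    \|\Grad\bm v_h\|_T^2\le C^L\Big(\|\Grad\bm v_h\|_{T_{\rm int}}^2+\sum_{F\in\text{chain}}\sum_k h_F^{2k-1}\|[\![\partial^k_{n_F}\bm v_h]\!]\|_F^2\Big).
\end{equation*}
One has to check that every face used belongs to $\mathcal F_h$, since the ghost penalty only acts there. I would choose the chains through cut cells as far as possible, and if necessary enlarge $\mathcal F_h$ to the faces of a bounded layer; standard usage (all faces adjacent to a cut cell) makes each chain step either land in an interior cell or cross a face with a cut neighbour.

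Finally, sum over $T\in T^h_\Omega\setminus\{T\subset\Omega\}$. Each interior cell and each face is used by a bounded number of chains. This holds because local quasi-uniformity from (G1)--(G2) bounds the number of cells within $L$ face-steps of any cell by $M(\sigma,c_N,L)$. Summing yields
\begin{equation*}
    \|\Grad\bm v_h\|^2_{\Omega_T\setminus\Omega}\le C\Big(\|\Grad\bm v_h\|^2_{\Omega}+\gamma_A^{-1}|\bm v_h|^2_{s,g}\Big),
\end{equation*}
where the factor $(k!)^{-2}$ is harmless for fixed $p$. Adding $\|\Grad\bm v_h\|_\Omega^2$ then gives \eqref{eq:locextension}.

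The step I expect to be the main obstacle is the chain bookkeeping near the junction. There the core-patch cells are small and two boundary pieces meet, so I must verify both that chains stay inside $\mathcal F_h$-faces and that the overlap count stays uniform. I would handle this with the explicit observation recorded after (G3): a bounded number of core-patch cells of common size $h^{1/\mu}$ cover the corner, so both the chain length and the overlap are bounded by constants depending only on $\omega$, $\sigma$ and $c_N$.
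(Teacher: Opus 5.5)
Your proposal is correct and follows the paper's own proof: the one-face control across each shared face, iterated along the (G3) chains of length at most $L$, then summed over the active layer with bounded chain overlap; beyond the paper, you also prove the one-face control (normal Taylor expansion of $\bm v_h-\bm q$ plus scaling), which the paper only asserts. You do not need to enlarge $\mathcal F_h$: truncate each chain at its first uncut cell, so every crossed face has a cut neighbour and lies in $\mathcal F_h$. The junction bookkeeping you flag is also not an extra obstacle, since (G3) is a hypothesis of the lemma and the overlap bound follows from (G1)--(G2) alone.
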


\begin{proof}
    The extension property \eqref{eq:extension} is proved by a discrete patchwise argument
    \cite{BURMAN20101217, Burman_Claus_Hansbo_Larson_Massing_2014, Burman_Hansbo_Larson_Zahedi_2025}: each cut
    cell $T$ is connected through a chain of at most $L$ face-neighbours to an element $T_{\text{int}}$
    that is interior to $\Omega$ (such a chain exists by (G3)), and the gradient on the fictitious part
    $T\setminus\Omega$ is estimated from $T_{\text{int}}$ by repeatedly applying, across each shared face $F$
    of the chain, the one-face control
    \[
        \big\|\Grad\bm v_h\big\|_{T_+}^2 \;\le\; C\Big(\big\|\Grad\bm v_h\big\|_{T_-}^2 + \sum_{k=1}^p
        h_F^{2k-1}\big\|[\![\partial_{n_F}^k\bm v_h]\!]\big\|_F^2\Big),
    \]
    which is the elementwise content of the ghost penalty. Each step is a scaling estimate on the \emph{full} pair
    $(T_-,T_+)$, hence cut-independent, and by (G2) the two cells across $F$ have comparable size, so $h_F$ is
    interchangeable with either $h_{T_\pm}$ and the per-face constant depends only on $p,\sigma,c_N$. Iterating the
    one-face control along the chain composes this constant at most $L$ times; since by (G2) the sizes along the chain vary
    by a factor at most $c_N^L$, the accumulated factor and the local-size seminorm $|\cdot|_{s,g}$ of
    \eqref{eq:gradednorm} that it produces on the right are bounded by a quantity depending only on $p,\sigma,c_N,L$.
    Summing over the active layer, with the overlap of chains bounded by (G1)--(G3), yields \eqref{eq:locextension} with
    $C_G=C_G(p,\sigma,c_N,L)$.
\end{proof}

We also use the local interpolation estimate: for the conforming interpolant $I_h$ on $\{\mathcal T_h\}$ and $\bm w\in
    (H^{p+1}(T))^d$,
\begin{equation}
    \label{eq:locinterp}
    \|\bm w-I_h\bm w\|_T + h_T\,\|\Grad(\bm w-I_h\bm w)\|_T \;\le\; C_I\,h_T^{\,p+1}\,|\bm w|_{H^{p+1}(T)},
\end{equation}
the standard Bramble--Hilbert estimate on shape-regular elements \cite{Braess2013-mg}, with $C_I=C_I(p,\sigma)$.

\subsection{Coercivity, continuity and quasi-optimality in \texorpdfstring{$|||\cdot|||_g$}{the local-size norm}}

\begin{proposition}[Cut- and level-independent coercivity and continuity]
    \label{prop:gradedcoercivity}
    Under Assumption~\ref{ass:ellipticity}, \textnormal{(G1)}--\textnormal{(G3)} and the boundary-geometry
    hypothesis of Lemma~\ref{lem:loctrace}, with the Nitsche penalty
    taken as $\gamma_D\eta/h_T$ on each cut cell, there exist $c_s,C_s>0$ depending only on
    $c_L,C_L,\eta_{\min},C_K,h_0,\gamma_A/E_e,p,\sigma,c_N,L$ and the constants $N_\Gamma,C_\Gamma$ of
    Lemma~\ref{lem:loctrace}, in particular independent of the refinement level and of how
    $\partial\Omega$ cuts the mesh, such that
    \begin{equation}
        A_h(\bm v_h,\bm v_h)\ge c_s\,|||\bm v_h|||_g^2,
        \qquad
        A_h(\bm u_h,\bm v_h)\le C_s\,|||\bm u_h|||_g\,|||\bm v_h|||_g
        \qquad \forall\,\bm u_h,\bm v_h\in\mathbb V_h .
    \end{equation}
\end{proposition}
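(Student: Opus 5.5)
The plan is to repeat the proof of Proposition~\ref{prop:coercivity} cell by cell, with the global $h$ replaced by $h_T$ and $h_F$. Lemma~\ref{lem:loctrace} takes the place of \eqref{eq:cuttrace}, and Lemma~\ref{lem:locext} takes the place of \eqref{eq:extension}. We begin with the same expansion of $A_h(\bm v_h,\bm v_h)$:
\begin{itemize}
\item the bulk term is bounded below by $c_L\|\operatorname{sym}\Grad\bm v_h\|_\Omega^2$ using Assumption~\ref{ass:ellipticity}(a);
\item the penalty term is bounded below by $\gamma_D\eta_{\min}\sum_T\|h_T^{-1/2}\bm v_h\|^2_{\partial\Omega\cap T}$;
\item the ghost-penalty term equals $|\bm v_h|_{s,g}^2$ up to the harmless factor $(k!)^{-2}\ge (p!)^{-2}$, which enters $c_s$.
\end{itemize}

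The Nitsche consistency term is split over cut cells. On each $T$ we apply Cauchy--Schwarz, weighting with $h_T^{1/2}$ and $h_T^{-1/2}$. Lemma~\ref{lem:loctrace} applied to $\Grad\bm v_h|_T$ gives
\[
2\Big|\int_{\partial\Omega\cap T}\bm n\cdot(\mathbb L:\Grad\bm v_h)\cdot\bm v_h\Big|\le \varepsilon C_LC_T\|\Grad\bm v_h\|_T^2+\tfrac{C_L}{\varepsilon}\|h_T^{-1/2}\bm v_h\|^2_{\partial\Omega\cap T}.
\]
Each cut cell appears once, so summing gives $\varepsilon C_LC_T\|\Grad\bm v_h\|^2_{\Omega_T}$ plus the penalty-type term. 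Lemma~\ref{lem:locext} bounds the first part by $\|\Grad\bm v_h\|_\Omega^2+\gamma_A^{-1}|\bm v_h|_{s,g}^2$. Korn's inequality \eqref{eq:korn} on the fixed domain $\Omega$ then converts the gradient into the strain plus $\|\bm v_h\|_{\partial\Omega}^2$.

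The only new point is this last boundary term. Since $h_T\le h_0$ for all $T$, we have
\[
\|\bm v_h\|^2_{\partial\Omega}=\sum_T h_T\|h_T^{-1/2}\bm v_h\|^2_{\partial\Omega\cap T}\le h_0\sum_T\|h_T^{-1/2}\bm v_h\|^2_{\partial\Omega\cap T}.
\]
With this, the choice of $\varepsilon$ and of $\gamma_0=4C_L/(\eta_{\min}\varepsilon)$ is identical to Proposition~\ref{prop:coercivity}. A final use of Korn's inequality gives coercivity in $|||\cdot|||_g$, with constants depending only on the listed quantities.

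For continuity, each term is treated cellwise as before. The bulk term is bounded by $C_L$. Each Nitsche term is bounded by $C_L\sqrt{C_T}\,\|\Grad\bm u_h\|_{\Omega_T}\,(\sum_T\|h_T^{-1/2}\bm v_h\|^2)^{1/2}$, using a discrete Cauchy--Schwarz over $T$, and then by Lemma~\ref{lem:locext}. The penalty term is bounded by $\gamma_D\eta_{\max}$, and the ghost term by Cauchy--Schwarz on the face sums.

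I expect the main obstacle to be the consistency of the local scalings, not any new estimate. The penalty is $\gamma_D\eta/h_T$ on the cut cell, and the trace inequality must be applied with the same $h_T$. On the ghost faces, $h_F$ must be interchangeable with $h_{T_\pm}$; this is guaranteed by (G2) and already absorbed in $C_G$. A secondary subtlety is whether $\eta_{\max}$ belongs among the dependencies; it is needed only for $C_s$, as in Proposition~\ref{prop:coercivity}, and should be noted there. The core junction cell is covered by Lemma~\ref{lem:loctrace} with $N_\Gamma=2$, so no separate treatment is needed.
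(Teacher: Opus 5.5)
Your proposal is correct and follows the paper's proof essentially step for step: the same expansion of $A_h(\bm v_h,\bm v_h)$, a cellwise Cauchy--Schwarz with Lemma~\ref{lem:loctrace} applied to $\Grad\bm v_h$, Young's inequality, absorption via Lemma~\ref{lem:locext}, and Korn's inequality with the boundary term dominated by the local penalty through $h_T\le h_0$, plus the same term-by-term continuity bound. Your two side remarks are also accurate refinements that the paper glosses over: the ghost term in $A_h$ matches $|\bm v_h|_{s,g}^2$ only up to the weights $(k!)^{-2}\in[(p!)^{-2},1]$, and $C_s$ genuinely depends on $\gamma_D\eta_{\max}$, which is missing from the stated list of dependencies.
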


\begin{proof}
    We repeat the proof of Proposition~\ref{prop:coercivity}, performing every estimate \emph{cell-by-cell}
    with the local size and summing, so that no global $h$ enters. As there,
    \[
        A_h(\bm v_h,\bm v_h)=\int_\Omega \Grad\bm v_h:\mathbb L(\bar{\bm u}):\Grad\bm v_h\,dV
        \;-\;2\!\int_{\partial\Omega}(\mathbb L\Grad\bm v_h\,\bm n)\!\cdot\!\bm v_h\,dS
        \;+\;\sum_{T\in T^h_{\partial\Omega}}\frac{\gamma_D\eta}{h_T}\|\bm v_h\|_{\partial\Omega\cap T}^2
        \;+\;|\bm v_h|_{s,g}^2 .
    \]
    The volume term is bounded below by $c_L\|\operatorname{sym}\Grad\bm v_h\|_\Omega^2$ (Assumption~\ref{ass:ellipticity}(a), pointwise, hence
    unchanged), and the penalty term by $\gamma_D\eta_{\min}\sum_T\|h_T^{-1/2}\bm v_h\|_{\partial\Omega\cap T}^2$. For the
    consistency Nitsche term we apply, on each cut cell $T$, Cauchy--Schwarz and Lemma~\ref{lem:loctrace} to $\bm
        w_h=\Grad\bm v_h$, then Young's inequality with parameter $\varepsilon>0$:
    \[
        2\!\int_{\partial\Omega\cap T}\!\!(\mathbb L\Grad\bm v_h\,\bm n)\!\cdot\!\bm v_h\,dS
        \;\le\;
        \varepsilon\,C_L^2 C_T\,\|\Grad\bm v_h\|_T^2
        \;+\;\frac{1}{\varepsilon}\,\|h_T^{-1/2}\bm v_h\|_{\partial\Omega\cap T}^2 .
    \]
    Summing over $T\in T^h_{\partial\Omega}$ gives $\varepsilon C_L^2 C_T\|\Grad\bm v_h\|_{\Omega_T}^2$ for the first
    group, which Lemma~\ref{lem:locext} absorbs into $\|\Grad\bm v_h\|_\Omega^2$ and $\gamma_A^{-1}|\bm v_h|_{s,g}^2$;
    Korn's inequality \eqref{eq:korn}, posed on the fixed physical domain $\Omega$ and hence unaffected by the
    refinement, with its boundary term dominated by the local-size penalty term since $h_T\le h_0$, then converts
    $\|\Grad\bm v_h\|_\Omega^2$ into $\|\operatorname{sym}\Grad\bm v_h\|_\Omega^2$ and the penalty term, exactly as in
    the proof of Proposition~\ref{prop:coercivity}.
    Choosing $\varepsilon$ small enough that $\varepsilon C_L^2 C_T C_G\max\{C_K, C_K h_0, \gamma_A^{-1}\}<\min\{c_L,\,1\}$ (the $\gamma_A^{-1}$ entry
    ensuring the ghost-penalty portion of the absorbed term leaves a positive fraction of
    $|\bm v_h|_{s,g}^2$) and tracking the
    $1/\varepsilon$ factor against $\gamma_D\eta_{\min}$ leaves a positive fraction of all three terms of
    \eqref{eq:gradednorm}, after a final application of \eqref{eq:korn} as in Proposition~\ref{prop:coercivity}; this
    is coercivity with $c_s=c_s(c_L,C_L,\eta_{\min},C_K,h_0,p,\sigma,c_N,L)$.

    Continuity is the same four-term split: the volume term by $C_L$ and Cauchy--Schwarz; each Nitsche term by
    Cauchy--Schwarz, Lemma~\ref{lem:loctrace} cell-wise and Lemma~\ref{lem:locext}, giving $C_L\sqrt{C_T}\,\|\Grad\bm
        u_h\|_{\Omega_T}\big(\textstyle\sum_T\|h_T^{-1/2}\bm v_h\|^2_{\partial\Omega\cap T}\big)^{1/2} \le C\,|||\bm u_h|||_g\,|||\bm v_h|||_g$; and the ghost-penalty
    term by Cauchy--Schwarz directly, $|\bm u_h|_{s,g}|\bm v_h|_{s,g}\le|||\bm u_h|||_g\,|||\bm v_h|||_g$. Summing gives
    $C_s$ with the stated dependence. No estimate used quasi-uniformity: every inequality lived on a full cell, a
    face-neighbour pair, or a chain of at most $L$ such pairs, with a constant fixed by (G1)--(G3).
\end{proof}

\begin{theorem}[Quasi-optimality on the graded family]
    \label{thm:gradedqo}
    Under the assumptions of Proposition~\ref{prop:gradedcoercivity}:
    \begin{enumerate}
        \item[\textnormal{(i)}] each linearised tangent problem $A_h(\Delta\bm u,\bm v_h)=\mathcal
                  F_{\text{Total}}(\bar{\bm u},\bm v_h)$ has a unique solution in $\mathbb V_h$, and the C\'ea
              estimate $|||\bm u-\bm u_h|||_g\le (C_s/c_s)\inf_{\bm v_h\in\mathbb V_h}|||\bm u-\bm v_h|||_g$
              holds for the linear elasticity model;
        \item[\textnormal{(ii)}] for a regular branch of the finite-strain problem, let $r_h\to0$ and set
              $\mathcal B_{g,h}=\{\bm v_h:|||I_h\bm u-\bm v_h|||_g\le r_h\}$. Assume on these neighbourhoods the local-size
              analogues of Theorem~\ref{thm:brr}: the assembled tangent is uniformly coercive with constant $c_s>0$;
              $F_h$ is Fr\'echet differentiable and $DF_h$ has Lipschitz modulus $L_{g,h}$ with
              $L_{g,h}r_h\le\rho$, where $\rho$ is sufficiently small independently of the cut and the refinement
              level; and the Jacobian defect is at most $\vartheta c_s$ for a fixed $\vartheta\in[0,1)$. Assume also
              the local-size consistency and quasi-optimal interpolation estimates, with
              $\inf_{\bm v_h\in\mathbb V_h}|||\bm u-\bm v_h|||_g=o(r_h)$. Then, for all sufficiently fine levels, the
              discrete problem has a unique solution in $\mathcal B_{g,h}$, with
              \begin{equation}
                  \label{eq:gradedqo}
                  |||\bm u-\bm u_h|||_g \;\le\; C\,\inf_{\bm v_h\in\mathbb V_h}|||\bm u-\bm v_h|||_g .
              \end{equation}
    \end{enumerate}
    The constants depend only on the uniform constants in these hypotheses and on the regular solution, never on the
    cut configuration or the refinement level.
\end{theorem}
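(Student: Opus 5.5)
Part~(i) follows from Proposition~\ref{prop:gradedcoercivity}, and Part~(ii) from rerunning the contraction argument of Theorem~\ref{thm:brr} with $|||\cdot|||_g$ in place of $|||\cdot|||$ and the radius $\delta h^{d/2}$ replaced by the abstract radius $r_h$.

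For (i), the coercivity and continuity constants $c_s,C_s$ of Proposition~\ref{prop:gradedcoercivity} are independent of the level and of the cut. Lax--Milgram on the finite-dimensional space $(\mathbb V_h,|||\cdot|||_g)$ then gives a unique solution of each tangent problem, exactly as in Corollary~\ref{cor:wellposed}. For the linear elasticity model, $A_h$ does not depend on the iterate. The exact solution satisfies the discrete forms: the Nitsche consistency term cancels the traction from integration by parts, the symmetric and penalty terms vanish on the zero trace, and the ghost-penalty jumps of the extended solution vanish, as in step (ii) of the proof of Theorem~\ref{thm:brr}. This gives Galerkin orthogonality $A_h(\bm u-\bm u_h,\bm v_h)=0$. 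One technical point is that $\bm u\notin\mathbb V_h$, so continuity must hold on $\mathbb V_h+\{\bm u\}$. I would therefore use the standard Strang-type split $\bm u-\bm u_h=(\bm u-\bm v_h)+(\bm v_h-\bm u_h)$, apply coercivity to the discrete part and orthogonality to move everything onto $\bm u-\bm v_h$. Continuity of the Nitsche term on the nondiscrete argument requires the cut interpolation trace estimate $\|h_T^{1/2}\Grad(\bm u-\bm v_h)\|_{\partial\Omega\cap T}$, which is controlled cell-wise as in the discussion after Assumption~\ref{ass:matchapprox}. Without it the Céa constant is only $C_s/c_s$ up to an extra norm, and I would state the estimate in that form.

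For (ii), I would follow the same three steps as in Theorem~\ref{thm:brr}.

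\emph{Stability.} Combining uniform coercivity with the defect bound gives $\langle DF_h(\bm v_h)\bm w,\bm w\rangle\ge(1-\vartheta)c_s|||\bm w|||_g^2$ on $\mathcal B_{g,h}$, so $\|DF_h(\bm v_h)^{-1}\|\le[(1-\vartheta)c_s]^{-1}$.

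\emph{Contraction.} Define the Newton map $G_h(\bm v)=\bm v-DF_h(I_h\bm u)^{-1}F_h(\bm v)$. By the mean-value form of the Lipschitz hypothesis, $G_h$ has contraction factor $L_{g,h}r_h/((1-\vartheta)c_s)\le\rho/((1-\vartheta)c_s)<1$ once $\rho$ is fixed small.

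\emph{Self-mapping.} This needs $\|DF_h(I_h\bm u)^{-1}F_h(I_h\bm u)\|\le(1-\text{factor})r_h$. By consistency and interpolation the left side is $O(\inf|||\bm u-\bm v_h|||_g)=o(r_h)$, so it holds on sufficiently fine levels. Banach's fixed-point theorem then yields a unique $\bm u_h\in\mathcal B_{g,h}$.

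The error bound follows by Taylor expansion about $I_h\bm u$. The remainder is at most $\tfrac12L_{g,h}|||\bm u_h-I_h\bm u|||_g^2\le\tfrac12\rho|||\bm u_h-I_h\bm u|||_g$ and can be absorbed. The triangle inequality with the quasi-optimal interpolation estimate then gives \eqref{eq:gradedqo}.

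The main obstacle is bookkeeping rather than new analysis. I must check that every constant comes only from the stated hypotheses, namely $c_s$, $\vartheta$, $\rho$ and the consistency/interpolation constants, and never from a global $h$. In particular, the self-mapping step must use only the assumption $o(r_h)$ and no power of $h$, since the abstract radius $r_h$ replaces the quasi-uniform balance $p>d/2$.
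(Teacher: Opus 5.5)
Your proposal is correct and follows the paper's proof step by step. For (i) you use Lax--Milgram and C\'ea; for (ii) you rerun the argument of Theorem~\ref{thm:brr} with $r_h$ in place of $\delta h^{d/2}$: the stability bound $[(1-\vartheta)c_s]^{-1}$, contraction of the frozen-derivative Newton map via $L_{g,h}r_h\le\rho$, self-mapping from the $o(r_h)$ best-approximation hypothesis, then Banach's theorem and the Taylor-remainder absorption.

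Your caveat in (i) is a real point that the paper's proof passes over, since it simply invokes C\'ea's lemma. Proposition~\ref{prop:gradedcoercivity} gives coercivity and continuity only on $\mathbb V_h$, so your Strang-type split is the rigorous route. It gives quasi-optimality with a constant of the form $1+C/c_s$, and with the infimum taken in $|||\cdot|||_g$ augmented by $\sum_T\|h_T^{1/2}\Grad\cdot\|^2_{\partial\Omega\cap T}$, rather than literally with $(C_s/c_s)$ in the plain norm.
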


\begin{proof}
    Coercivity and continuity in $|||\cdot|||_g$ (Proposition~\ref{prop:gradedcoercivity}) make $A_h$ a
    uniformly bounded, uniformly coercive bilinear form on $(\mathbb V_h,|||\cdot|||_g)$. For (i),
    Lax--Milgram gives existence, uniqueness and the stability bound, and C\'ea's lemma the quasi-optimal
    estimate; the abstract argument uses only the coercivity/continuity constants and Galerkin orthogonality
    of the consistent form, none of which refers to the global mesh size. For (ii), uniform coercivity and the
    Jacobian-defect bound give
    $\|DF_h(\bm v_h)^{-1}\|\le[(1-\vartheta)c_s]^{-1}$ on $\mathcal B_{g,h}$. The derivative Lipschitz estimate then
    makes the frozen-derivative map
    $G_h(\bm v_h)=\bm v_h-DF_h(I_h\bm u)^{-1}F_h(\bm v_h)$ a contraction whenever $\rho$ is sufficiently small.
    The consistency estimate and
    $\inf_{\bm v_h}|||\bm u-\bm v_h|||_g=o(r_h)$ ensure that $G_h$ maps $\mathcal B_{g,h}$ into itself for all
    sufficiently fine levels. Banach's fixed-point theorem gives the unique discrete solution in that ball, and the
    Taylor-remainder argument of Theorem~\ref{thm:brr}, with
    $L_{g,h}|||\bm u_h-I_h\bm u|||_g\le L_{g,h}r_h\le\rho$, gives \eqref{eq:gradedqo}.
\end{proof}

The local-size cut trace inequality (Lemma~\ref{lem:loctrace}), the ghost-penalty extension property
(Lemma~\ref{lem:locext}), the Nitsche coercivity of $A_h$ in $|||\cdot|||_g$
(Proposition~\ref{prop:gradedcoercivity}) and the local interpolation estimate \eqref{eq:locinterp} establish
Assumption~\ref{ass:gradedstab} with constants independent of the cut and of the refinement level, and therefore give
the linear quasi-optimality conclusion of Theorem~\ref{thm:gradedqo}\textnormal{(i)}. For the nonlinear result,
Proposition~\ref{prop:gradedcoercivity} supplies the local counterpart of (J0) if its material hypotheses hold uniformly
on $\mathcal B_{g,h}$; differentiability, the complete derivative Lipschitz bound, and the Jacobian-defect estimate are
the additional explicit hypotheses in part~(ii), not consequences of mesh geometry. The only properties of the refined family used to establish the geometric ingredients were shape-regularity (G1), the bounded
neighbour-size ratio (G2), and the bounded active-layer width (G3); the analysis is therefore agnostic to whether the
local sizes are produced by the a priori grading \eqref{eq:grading} or by adaptive refinement, as claimed in
Section~\ref{sec:gradinginherit}. Two ingredients of Section~\ref{sec:theory} are \emph{not} purely local. Korn's
inequality \eqref{eq:korn} is posed on the fixed physical domain $\Omega$, so it is independent of the mesh altogether
and enters the graded proof unchanged. The Friedrichs/Poincar\'e inequality of Lemma~\ref{lem:discrete}(ii), used for
the conditioning estimate, is
not needed here: quasi-optimality requires only coercivity and continuity in the energy norm, both of which we proved
cell-by-cell (up to the mesh-independent Korn constant). Consequently Assumption~\ref{ass:gradedstab} may be read as a theorem under (G1)--(G3); the nonlinear
application in Theorem~\ref{thm:cutinherit} additionally remains conditional on the local-size hypotheses of
Theorem~\ref{thm:gradedqo}\textnormal{(ii)}.

\end{document}